\date{\today} 
\documentclass[10pt]{article}
\usepackage{hyperref}
\usepackage{amssymb, amsmath}
\usepackage{stmaryrd}
\usepackage[final]{showlabels}
\usepackage{float}
\usepackage{abstract}

\usepackage{latexsym}

\usepackage{amsthm}

\newcommand{\g}{{\mathfrak g}}

\newcommand{\fa}{{\mathfrak a}}

\newcommand{\fe}{{\mathfrak e}}
\newcommand{\ff}{{\mathfrak f}}
\newcommand{\fg}{{\mathfrak g}}
\newcommand{\fh}{{\mathfrak h}}

\newcommand{\fk}{{\mathfrak k}}

\newcommand{\fn}{{\mathfrak n}}

\newcommand{\fp}{{\mathfrak p}}

\newcommand{\fs}{{\mathfrak s}}
\newcommand{\ft}{{\mathfrak t}}
\newcommand{\fu}{{\mathfrak u}}

\newcommand{\fz}{{\mathfrak z}}

\renewcommand\sp{\mathfrak {sp}}

\newcommand{\1}{\mathbf{1}}

\newcommand{\cA}{\mathcal{A}}

\newcommand{\cH}{\mathcal{H}}

\newcommand{\cM}{\mathcal{M}}

\newcommand{\cO}{\mathcal{O}}

\newcommand{\cW}{\mathcal{W}}

\newcommand{\N}{{\mathbb N}}
\newcommand{\Z}{{\mathbb Z}}
\newcommand{\R}{{\mathbb R}}
\newcommand{\C}{{\mathbb C}}

\newcommand{\K}{{\mathbb K}}

\renewcommand{\H}{{\mathbb H}}

\renewcommand{\hat}{\widehat}

\renewcommand{\tilde}{\widetilde}

\renewcommand{\L}{\mathop{\bf L{}}\nolimits}


\newcommand{\GL}{\mathop{{\rm GL}}\nolimits}

\newcommand{\OO}{\mathop{\rm O{}}\nolimits}

\newcommand{\Sym}{\mathop{{\rm Sym}}\nolimits}


\newcommand{\gl}  {\mathop{{\mathfrak{gl} }}\nolimits}

\newcommand{\fsl} {\mathop{{\mathfrak{sl} }}\nolimits}

\newcommand{\su}  {\mathop{{\mathfrak{su} }}\nolimits}
\newcommand{\so}  {\mathop{{\mathfrak{so} }}\nolimits}

\newcommand{\Fix}{\mathop{{\rm Fix}}\nolimits}

\newcommand{\ad}{\mathop{{\rm ad}}\nolimits}
\newcommand{\Ad}{\mathop{{\rm Ad}}\nolimits}

\newcommand{\tr}{\mathop{{\rm tr}}\nolimits}
\newcommand{\str}{\mathop{{\rm str}}\nolimits}

\newcommand{\Herm}{\mathop{{\rm Herm}}\nolimits}

\newcommand{\Aut}{\mathop{{\rm Aut}}\nolimits}

\newcommand{\Der}{\mathop{{\rm Der}}\nolimits}

\newcommand{\diag}{\mathop{{\rm diag}}\nolimits}
\newcommand{\End}{\mathop{{\rm End}}\nolimits}
\newcommand{\id}{\mathop{{\rm id}}\nolimits}
\newcommand{\rk}{\mathop{{\rm rank}}\nolimits}

\renewcommand{\dim}{\mathop{{\rm dim}}\nolimits}

\newcommand{\spec}{\mathop{{\rm spec}}\nolimits}

\newcommand{\Inn}{\mathop{{\rm Inn}}\nolimits}

\newcommand{\cone}{\mathop{{\rm cone}}\nolimits}

\newcommand{\oline}{\overline}
\newcommand{\la}{\langle}
\newcommand{\ra}{\rangle}
\newcommand{\spann}{{\rm span}}

\def\theoremname{Theorem}
\def\propositionname{Proposition}
\def\corollaryname{Corollary}
\def\lemmaname{Lemma}
\def\remarkname{Remark}
\def\conjecturename{Conjecture} 

\def\definitionname{Definition}
\def\exercisename{Exercise}
\def\examplename{Example}
\def\examplesname{Examples}
\def\problemname{Problem}
\def\problemsname{Problems}

\def\@thmcounter#1{\noexpand\arabic{#1}}
\def\@thmcountersep{}
\def\@begintheorem#1#2{\it \trivlist \item[\hskip 
\labelsep{\bf #1\ #2.\quad}]}
\def\@opargbegintheorem#1#2#3{\it \trivlist
      \item[\hskip \labelsep{\bf #1\ #2.\quad{\rm #3}}]}
\makeatother
\newtheorem{theor}{\theoremname}[section]
\newtheorem{propo}[theor]{\propositionname}
\newtheorem{coro}[theor]{\corollaryname}
\newtheorem{lemm}[theor]{\lemmaname}

\newenvironment{thm}{\begin{theor}\it}{\end{theor}}

\newenvironment{prop}{\begin{propo}\it}{\end{propo}}

\newenvironment{cor}{\begin{coro}\it}{\end{coro}}

\newenvironment{lem}{\begin{lemm}\it}{\end{lemm}}
\newenvironment{lemma}{\begin{lemm}\it}{\end{lemm}}

\newtheorem{rema}[theor]{\remarkname}

\newenvironment{rem}{\begin{rema}\rm}{\end{rema}}

\newtheorem{stepnow}[theor]{}

\newtheorem{defin}[theor]{\definitionname} 

\newenvironment{definition}{\begin{defin}\rm}{\end{defin}}

\newtheorem{exerc}{\exercisename}[section]

\newtheorem{exa}[theor]{\examplename}

\newenvironment{example}{\begin{exa}\rm}{\end{exa}}
\newenvironment{ex}{\begin{exa}\rm}{\end{exa}}

\newtheorem{exas}[theor]{\examplesname}

\newtheorem{conj}[theor]{\conjecturename}

\newtheorem{pro}[theor]{\problemname}

\newtheorem{prs}[theor]{\problemsname}

\newcommand{\pmat}[1]{\begin{pmatrix} #1 \end{pmatrix}}

\usepackage{bbold}

\newcommand{\Wmin}{W_{\mathrm{min}}}
\newcommand{\Wmax}{W_{\mathrm{max}}}
\newcommand{\Cmin}{C_{\mathrm{min}}}
\newcommand{\Cmax}{C_{\mathrm{max}}}

\newcommand{\Stand}{\mathrm{Stand}}
\newcommand{\bbone}{\mathbb{1}}
\newcommand{\bbO}{\mathbb{O}}

\newcommand{\p}{\partial}

\newcommand{\tran}{\mathsf{T}}

\newcommand{\der}{\mathrm{der}}

\addtolength\textwidth{3cm}
\addtolength\textheight{1cm}
\addtolength\oddsidemargin{-2cm}
\addtolength\evensidemargin{-2cm}

\usepackage{enumitem}
\setlist[enumerate,1]{label={\rm (\alph*)}}
\setlist[enumerate,2]{label={\rm (\roman*)}}

\makeatletter
\def\blfootnote{\xdef\@thefnmark{}\@footnotetext}
\makeatother


\begin{document}

\title{Classification of 3-graded causal subalgebras of real simple Lie algebras}

\author{Daniel Oeh\footnote{Department Mathematik, Friedrich--Alexander--Universit\"at Erlangen--N\"urnberg, Cauerstr. 11, D-91058 Erlangen, Germany, oehd@math.fau.de} \,\footnote{Supported by DFG-grant Ne 413/10-1}} 

\maketitle

\begin{NoHyper}
  \blfootnote{2010 \textit{Mathematics Subject Classification.} Primary 22E45; Secondary 81R05, 81T05}
  \blfootnote{\textit{Key words and phrases.} hermitian Lie algebra, standard subspace, Jordan algebra.}
\end{NoHyper}

\begin{abstract}
  Let \((\g,\tau)\) be a real simple symmetric Lie algebra and let \(W \subset \g\) be an invariant closed convex cone which is pointed and generating with \(\tau(W) = -W\). For elements \(h \in \g\) with \(\tau(h) = h\), we classify the Lie algebras \(\g(W, \tau,h)\) which are generated by the closed convex cones
  \[C_{\pm}(W, \tau, h) := (\pm W) \cap \g_{\pm 1}^{-\tau}(h),\]
  where \(\g^{-\tau}_{\pm 1}(h) := \{x \in \g : \tau(x) = -x, [h,x] = \pm x\}\). These cones occur naturally as the skew-symmetric parts of the Lie wedges of endomorphism semigroups of certain standard subspaces.

  We prove in particular that, if \(\g(W, \tau,h)\) is non-trivial, then it is either a hermitian simple Lie algebra of tube type or a direct sum of two Lie algebras of this type. Moreover, we give for each hermitian simple Lie algebra and each equivalence class of involutive automorphisms \(\tau\) of \(\g\) with \(\tau(W) = -W\) a list of possible subalgebras \(\g(W, \tau,h)\) up to isomorphy.
\end{abstract}

\section{Introduction}
\label{sec:intro}

Consider a quadruple \((\g, W, \tau, D)\), where \(\g\) is a real finite-dimensional Lie algebra, \(W \subset \g\) is a pointed generating \((e^{\ad \g})\)-invariant closed convex cone, \(\tau \in \Aut(\g)\) is an involution, and \(D \in \der(\g)\) is a derivation of \(\g\).
Suppose that \(\tau\) and \(D\) commute, that \(\tau(W) = -W\), and that \(e^{\R D}W \subset W\).
For an \(\ad(x)\)-invariant subspace \(F \subset \g, x \in \g,\) and \(\lambda \in \R\), we define \(F_\lambda(x) := \ker(\ad(x) - \lambda \id_F)\).
Moreover, we define
\[C_\pm(W, \tau, D) := (\pm W) \cap \g^{-\tau}_{\pm 1}(D) \quad \text{and} \quad \g_\pm(W, \tau, D) := \spann\,C_\pm(W, \tau, D).\]
One can show (cf.\ Lemma \ref{lem:gred-subalg}) that the subspaces \(\g_\pm(W, \tau, D)\) are abelian, so that
\begin{equation}
  \label{eq:std-wedge-ideal}
  \g(W, \tau, D) := \g_-(W, \tau, D) \oplus [\g_-(W, \tau, D), \g_+(W, \tau, D)] \oplus \g_+(W, \tau, D)
\end{equation}
is a subalgebra of \(\g\).
We call \(\g(W, \tau, D)\) the \emph{3-graded causal subalgebra defined by \((W, \tau, D)\)}, and we say that \((\g, W, \tau, D)\) is a \emph{3-graded causal Lie algebra} if
\(C_+(W,\tau,D) \cup C_-(W, \tau, D)\) generates \(\g\), that is, we have \(\g = \g(W, \tau, D)\).
If \(D = \ad h\) for some \(h \in \g\), then we also write
\[\g(W, \tau, h) := \g(W, \tau, \ad h), \quad \g_\pm(W, \tau, h) := \g_\pm(W, \tau, \ad h), \quad \text{and} \quad C_\pm(W, \tau, h) := C_\pm(W, \tau, \ad h) .\]

In this article, we classify for all real finite-dimensional simple Lie algebras \(\g\) the 3-graded causal subalgebras \(\g(W, \tau, D)\) up to isomorphy.

Before stating the main theorem of this article, we make some preliminary observations. Let \(\g\) be a simple Lie algebra and let \(W \subset \g\) be an invariant cone. As the cone \(W\) is invariant under inner automorphisms of \(\g\) by assumption, the subspaces \(W \cap (-W)\) and \(W - W\) are ideals in \(\g\).
If \(W\) is non-trivial, i.e.\ \(\{0\} \neq W \neq \g\), then \(\g\) is called a \emph{hermitian simple} Lie algebra.
Every hermitian simple Lie algebra \(\g\) contains an up to a sign unique pointed generating invariant closed convex cone \(\Wmin\) such that, for every pointed generating invariant closed convex cone \(W \subset \g\), we either have \(\Wmin \subset W\) or \(\Wmin \subset -W\).
This is a consequence of the Kostant--Vinberg Theorem (cf.\ \cite[p.\ 262]{HHL89}).
We will later see (cf.\ Lemma \ref{lem:assoc-wedge-simple}) that we have \(\g_\pm(W,\tau,h) = \g_\pm(\Wmin,\tau,h)\) if \(\tau(W) = -W\).
This means that \(\g(W, \tau, h)\) does not depend on the choice of \(W\) if \(W\) is pointed and generating. For our purposes, it therefore suffices to consider the subalgebras \(\g(\tau,h) := \g(\Wmin, \tau,h)\) for our classification problem.
The hermitian simple Lie algebras \(\g\) which admit a 3-grading induced by the adjoint representation of an element in \(\g\) are said to be of \emph{tube type}. It is obvious from \eqref{eq:std-wedge-ideal} that \(\g(W,\tau,h) = \g\) is only possible if \(\g\) is of tube type.

The following theorem is the main result of this article:

\begin{thm}
  \label{thm:herm-invclass}
  Let \(\g\) be a hermitian simple Lie algebra.
  Let \(\tau \in \Aut(\g)\) be an involution with \(\tau(\Wmin) = -\Wmin\) and let \(h \in \g\) with \(\tau(h) = h\).
  If the Lie algebra \(\g(\tau,h)\) is non-trivial, then it is either hermitian simple and of tube type or is the direct sum of two such Lie algebras.
  More precisely, \(\g(\tau,h)\) is either \(\{0\}\) or isomorphic to one of the Lie algebras in {\rm Table \ref{table:herm-invclass}}.
  Conversely, for every involutive automorphism \(\tau \in \Aut(\g)\) with \(\tau(\Wmin) = -\Wmin\), all subalgebras \(\g(\tau,h)\) in the table can be realized for some \(h \in \g\) with \(\tau(h) = h\).
  \begin{table}[H]
    \centering
    \begin{tabular}{|c|c|c|c|}
      \hline
      \(\g\) & \(\g^\tau\) & \(\g(\tau,h)\) & \(p,q,k,\ell\)\\
      \hline
      \(\su(p,p)\) & \(\fsl(p,\C) \times \R\) & \(\su(k,k) \oplus \su(\ell,\ell)\) & \(p \geq k + \ell > 0\)\\
      \(\su(p,q)\) & \(\so(p,q)\) & \(\sp(2k,\R)\) & \(p > q \geq k > 0\)\\
      \(\su(2p,2q)\) & \(\sp(p,q)\) & \(\so^*(4k)\), \(\fsl(2,\R)\) & \(p > q > 0, q \geq k > 1\)\\
      \hline
      \(\sp(2p,\R)\) & \(\fsl(p,\R) \times \R\) & \(\sp(2k,\R) \oplus \sp(2\ell,\R)\) & \(p \geq k + \ell > 0\)\\
      \(\sp(4p,\R)\) & \(\sp(2p,\C)\) & \(\su(k,k)\) & \(p \geq k > 0\)\\
      \hline
      \(\so^*(4p)\) & \(\su^*(2p) \times \R\) & \(\so^*(4k) \oplus \so^*(4\ell)\), \(\fsl(2,\R)\), & \(p \geq k + \ell > 1,\, k,\ell \neq 1\)\\
                    & & \(\so^*(4k) \oplus \fsl(2,\R)\) & \\
                    & & \(\fsl(2,\R) \oplus \fsl(2,\R)\) & \\
      \(\so^*(2p)\) & \(\so(p,\C)\) & \(\su(k,k)\) & \(p \geq 3, \lfloor \frac{p}{2} \rfloor \geq k > 0\)\\
      \hline
      \(\so(2,p)\) & \(\so(1,1) \times \so(1,p-1)\) & \(\fsl(2,\R) \oplus \fsl(2,\R), \so(2,k)\) & \(p > 0, p \neq 2, k \in \{p, 1\}\) \\
      \(\so(2,p)\) & \(\so(q,1) \times \so(1,p-q)\) & \(\so(2,k)\) & \(q > 1, k \in \{1,p - q + 1, q + 1\}\)\\
      \(\so(2,p)\) & \(\so(1,p)\) & \(\so(2,1)\) & \\
      \hline
      \(\fe_{6(-14)}\) & \(\sp(2,2)\) & \(\so(2,5), \fsl(2,\R)\) & \\
      \(\fe_{6(-14)}\) & \(\ff_{4(-20)}\) & \(\fsl(2,\R)\) & \\
      \hline
      \(\fe_{7(-25)}\) & \(\fe_{6(-26)} \times \R\) & \(\fe_{7(-25)}, \so(2,10), \fsl(2,\R)\), & \\
                       &  & \(\so(2,10) \oplus \fsl(2,\R)\) & \\
      \(\fe_{7(-25)}\) & \(\su^*(8)\) & \(\so^*(12), \so^*(8), \fsl(2,\R)\) & \\
      \hline
    \end{tabular}
    \caption{Lie subalgebras \(\g(\tau,h)\) for an involution \(\tau \in \Aut(\g)\) with \(\tau(\Wmin) = -\Wmin\) and \(h \in \g^\tau\).}
    \label{table:herm-invclass}
  \end{table}
\end{thm}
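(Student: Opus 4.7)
The plan is to combine a general structural argument with a case-by-case analysis using the classification of involutions $\tau$ preserving $\Wmin$ up to sign and the structure of hermitian tube-type Lie algebras.

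\textbf{Reduction and structural dichotomy.} I would first verify $\g_\pm(W, \tau, h) = \g_\pm(\Wmin, \tau, h)$. One inclusion is immediate from $\Wmin \subset W$ (up to sign, by Kostant--Vinberg). For the opposite, the hypothesis $\tau(W) = -W$ forces $\Wmin \subset \fq$, and the centralizer of $h$ in $G^\tau$ moves the extreme rays of $W \cap \fq_{\pm 1}(h)$ into $\Wmin$. With $W = \Wmin$, the element $h$ induces an integer 3-grading on $\g(\tau, h)$, and each simple ideal inherits a pointed generating invariant cone from $\Wmin$. By Kostant--Vinberg every simple ideal is hermitian; by existence of the 3-grading induced by an Euler-type element, each is of tube type. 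Analysing the $\tau$-action on the set of simple ideals --- using that $\tau$ must swap $C_+$ with $-C_-$ and both cones must project non-trivially to each simple ideal in $\g(\tau,h)$ --- bounds the number of simple summands by two.

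\textbf{Enumeration of involutions.} The involutions $\tau$ with $\tau(\Wmin) = -\Wmin$ correspond to equivalence classes of causal symmetric pairs $(\g, \g^\tau)$; these are tabulated in the literature on causal symmetric spaces (e.g.\ Hilgert--\'Olafsson). For each hermitian simple $\g$ the list matches the second column of Table~\ref{table:herm-invclass}.

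\textbf{Case-by-case identification.} For each pair $(\g, \tau)$, I fix a maximal abelian subspace $\fa \subset \fq$ to which $h$ is $G^\tau$-conjugate, and decompose $\g$ into restricted root spaces relative to $\fa$. The candidate Euler elements $h$ correspond to partial sums of strongly orthogonal restricted roots, and $\g(\tau, h)$ is read off as the subalgebra generated by the root spaces with restricted weight $\pm 1$ on $h$. In the classical cases one identifies this subalgebra with the listed entries via an explicit matrix realisation; in the exceptional cases $\fe_{6(-14)}$ and $\fe_{7(-25)}$ one uses restricted Satake diagrams together with the Jordan-algebraic description of $\Wmin$. Realisability of each entry is verified by producing an explicit partial sum of strongly orthogonal roots and checking $C_\pm(h) \neq \{0\}$ via the classical non-emptiness of root spaces in $\pm\Wmin$.

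\textbf{Main obstacle.} The main difficulty will be the exceptional rows, where the absence of a matrix model forces the identification to proceed through restricted root data and explicit Jordan triple computations. A secondary source of work is distinguishing the mixed two-summand entries (such as $\so^*(4k) \oplus \fsl(2,\R)$ inside $\so^*(4p)$, or $\so(2,10) \oplus \fsl(2,\R)$ inside $\fe_{7(-25)}$), which requires careful bookkeeping of the $\tau$-action between simple ideals of $\g(\tau, h)$ and verification that the candidate summands actually commute as Lie subalgebras of $\g$.
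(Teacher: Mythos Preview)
Your proposal contains two concrete errors that undermine the argument as stated. First, the claim that $\tau(W) = -W$ forces $\Wmin \subset \fq$ is false: $\Wmin$ is generating in $\g$, so it cannot lie in the proper subspace $\fq$. The paper's reduction to $\Wmin$ (Lemma~\ref{lem:assoc-wedge-simple}) instead observes that every element of $W \cap \g_{\pm 1}(h)$ is nilpotent of convex type, and such elements automatically lie in $\Wmin \cup (-\Wmin)$ by \cite[Thm.~III.9]{HNO94}. Second, you propose to conjugate $h$ into a maximal abelian subspace $\fa \subset \fq$, but $h \in \g^\tau = \fh$ and $\fh \cap \fq = \{0\}$, so this is impossible. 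One must instead work with a maximal abelian $\fa_\fh \subset \fh \cap \fp$ (for a Cartan involution $\theta$ commuting with $\tau$), and then extend to a $\tau$-invariant maximal abelian $\fa \subset \fp$; the paper carries this out in Lemma~\ref{lem:std-wedge-hypelements} and Proposition~\ref{prop:inv-herm-basis}. Your dichotomy argument via the $\tau$-action on simple ideals is also not quite right: $\tau$ acts as $-\id$ on $\fq_{\pm 1}(h)$, so it fixes each simple ideal of $\g(\tau,h)$ rather than permuting them; the involution that actually exchanges $\g_1(h)$ and $\g_{-1}(h)$ is the Cartan involution $\theta$.

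Beyond these issues, your overall strategy of a root-space/matrix case analysis is genuinely different from the paper's route. The paper's central idea is to pass through the Kantor--Koecher--Tits construction: for a suitable $h$ inducing a 3-grading, $V := \g_1(h)$ is a simple euclidean Jordan algebra, $\sigma := -\tau|_V$ is a Jordan algebra involution (Proposition~\ref{prop:invclass-jordan-inv}), and $\g(\tau,h)$ is recovered via KKT from $V^\sigma$ (Theorem~\ref{thm:invclass-3grad-jordan}). The dichotomy ``simple or direct sum of two simples'' then falls out of the classification of involutions on simple euclidean Jordan algebras \cite[Table~1.5.1]{BH98}, with no bookkeeping of commuting ideals required. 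The lower-rank entries in the table are produced uniformly by restricting to subalgebras $V^{(k)} = V_1(c_1 + \cdots + c_k)$ for partial Jordan frames (Proposition~\ref{prop:jordan-simple-subalg}, Lemma~\ref{lem:caley-class-easy-case}), and the exceptional cases are handled by the same Jordan-algebraic machinery rather than Satake diagrams. This approach also absorbs the reduction from arbitrary $h$ to one with $\spec(\ad h) \subset \{0,\pm\tfrac12,\pm 1\}$ (Theorem~\ref{thm:invclass-5-grad-reduction}), which your proposal does not address.
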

Our interest in the subalgebras \(\g(W,\tau,D)\) comes from the theory of standard subspaces: A \emph{standard subspace} of a complex Hilbert space \(\cH\) is a closed real subspace \(V\) such that \(V \cap iV = \{0\}\) and \(V + iV\) is dense in \(\cH\).
We denote the set of standard subspaces of \(\cH\) by \(\Stand(\cH)\).
In Algebraic Quantum Field Theory, standard subspaces can be constructed naturally in the context of Haag--Kastler nets (cf.\ \cite{Ha96}): Here, one studies nets of von Neumann algebras
\[\cO \mapsto \cM(\cO)\]
on a Hilbert space \(\cH\) indexed by open regions in a spacetime \(X\).
One of the properties of such a net is the existence of a unit vector \(\Omega \in \cH\) that is \emph{cyclic} for each \(\cM(\cO)\), i.e.\  \(\cM(\cO)\Omega\) spans a dense subspace of \(\cH\). If \(\Omega\) is also \emph{separating} for some \(\cM(\cO)\), i.e.\ if the map \(\cM(\cO) \ni A \mapsto A\Omega\) is injective, then we obtain a standard subspace by setting
\[V_\cO := \oline{\{A\Omega : A \in \cM(\cO), A^* = A\}}.\]
Conversely, one can pass from standard subspaces to von Neumann algebras via Second Quantization in such a way that locality and isotony properties are preserved.

We refer to \cite{Lo08} for more information about standard subspaces and to \cite{Ar63} and \cite[Sec.\ 6]{NO17} for more information about the translation procedure outlined above.

One particularly interesting problem in this context is to understand the order structure on \(\Stand(\cH)\) defined by the inclusion relation.
As the arguments in \cite{Ne18} show, a natural approach is to examine, for a fixed standard subspace \(V \in \Stand(\cH)\) and a unitary representation \((U,\cH)\) of a Lie group \(G\), the orbit \(U(G)V \subset \Stand(\cH)\).
The order structure on \(U(G)V\) is encoded in the closed subsemigroup
\begin{equation}
  \label{eq:stand-semigrp}
  S_V := \{g \in G : U(g)V \subset V\}.
\end{equation}
Infinitesimally, the subsemigroup \(S_V\) can be described by its \emph{Lie wedge} \(\L(S_V)\), which is a semigroup analog of a Lie algebra (cf.\ \cite{HHL89}):
\[\L(S_V) := \{x \in \L(G) : \exp(\R_{\geq 0} x) \subset S_V\}.\]
Here, \(\L\) denotes the Lie functor.

The Lie wedge \(\L(S_V)\) can be described explicitly in the following setting:
Let \(\tau_G\) be an involutive automorphism of \(G\) and suppose that \((U,\cH)\) extends to a representation of \(G \rtimes \{\1, \tau_G\}\) such that \(U(\tau_G)\) is antiunitary.
Let \(\tau := \L(\tau_G)\). 
For \(h \in \g^\tau,\) we denote the infinitesimal generator of the unitary one-parameter group \(t \mapsto U(\exp(th))\) by \(\partial U(h)\). Then
\[J := U(\tau_G) \quad \text{and} \quad \Delta := e^{2\pi i \partial U(h)}\]
determine a standard subspace \(V := \Fix(J\Delta^{1/2}) \in \Stand(\cH)\), and \(S_V\) is a subsemigroup of \(G\) which is invariant under the operation \(g \mapsto g^* := \tau_G(g)^{-1}, g \in G\).

The following result from \cite{Ne19} then shows that \(\L(S_V)\) can be determined using the structure theory of Lie algebras: Let \(C_U := \{x \in \g : -i\p U(x) \geq 0\}\) be the \emph{positive cone of \(U\)} and suppose that the kernel of \(U\) is discrete, i.e.\ \(C_U\) is pointed. The closed convex cone \(C_U\) is invariant and we have \(\tau(C_U) = -C_U\) because
\[-i\p U(\tau(x)) = -iJ\p U(x) J = J(i\p U(x))J \quad \text{for } x \in \g.\]

Then the Lie wedge \(\L(S_V)\) is given by
\begin{equation}
  \L(S_V) = C_-(C_U, \tau, h) \oplus \g^\tau_0(h) \oplus C_+(C_U, \tau, h).
\end{equation}
and \(\g_\mathrm{red} := \spann(\L(S_V))\) is a 3-graded Lie algebra. On \(\g_\mathrm{red}\), the involution \(\tau\) coincides with the involution \(e^{i\pi \ad h }\) (cf.\ \cite[Thm.\ 4.4]{Ne19}).

When it comes to the order structure on \(U(G)V\) defined by the semigroup \(S_V\) in \eqref{eq:stand-semigrp}, we are mainly interested in strict inclusions of standard subspaces, which in turn correspond to elements which are not invertible in \(S_V\). Hence, it is reasonable to focus on the ideal in \(\g_\mathrm{red}\) that is generated by \(C_+(C_U, \tau, h)\) and \(C_-(C_U, \tau, h)\). If \(\g_C := C_U - C_U\) denotes the ideal of \(\g\) generated by \(C_U\), then this means that \(\g_C(C_U,\tau, \ad h)\) is the 3-graded causal subalgebra we are interested in.

\subsection*{Content of this article}
This article is divided into two main parts: In Section \ref{sec:liealg-conv-jordan}, we provide the background knowledge that is necessary for the proof of Theorem \ref{thm:herm-invclass}.
We first recall some general facts on convex sets in Lie algebras in Section \ref{sec:conv-liealg}.
In Section \ref{sec:herm-liealg}, we introduce the basic structure theory of hermitian simple Lie algebras and examine those involutive automorphisms of hermitian simple Lie algebras that flip the minimal invariant cone \(\Wmin\).
Our proof of Theorem \ref{thm:herm-invclass} relies heavily on the structure theory of nilpotent orbits of convex type and their associated \(\fsl_2\)-triples. We therefore recall all the definitions and results which are needed for the proof of the main theorem from \cite{HNO94} and \cite{Sa80}. These results are based on the work on the structure theory of bounded symmetric domains by Kor{\'a}nyi and Wolf (cf.\ \cite{KW65}, \cite{WK65}).
Hermitian simple Lie algebras are closely related to simple euclidean Jordan algebras via the Kantor--Koecher--Tits construction.
We will explain this relation in more detail in Section \ref{sec:jordan-algebras} because it is crucial for the proof of Theorem \ref{thm:herm-invclass}.

Finally, in Section \ref{sec:liewedge-std-subspace}, we prove Theorem \ref{thm:herm-invclass}. In the above context, the Lie wedge \(\L(S_V)\) generates \(\g\) only if \(\tau = e^{i\pi\ad(h_0)}\) for some semisimple element \(h_0 \in \g\) such that \(\spec(\ad(h_0)) \subset \Z\). We will consider this case in detail in Section \ref{sec:cayley-type-inv} and summarize the results in Theorem \ref{thm:cayley-type-class-simplepart}. 
The main step towards proving the general case is done in Section \ref{sec:reductions} and, more specifically, in Theorem \ref{thm:invclass-5-grad-reduction}, where we reduce the classifications of the subalgebras \(\g(\tau,h)\) to the case where \(h \in \g^\tau\) is hyperbolic and induces a 5-grading on \(\g\).
The first part of Theorem \ref{thm:herm-invclass} is one of the consequences of these results and is shown independently from the actual classification (cf.\ Theorem \ref{thm:invclass-3grad-jordan}). 

\subsection*{Notation}

\begin{itemize}
  \item For a vector space \(V\) over \(\K \in \{\R,\C\}\), a linear endomorphism \(A\) on \(V\), and \(\lambda \in \K\), we denote the \(\lambda\)-eigenspace of \(A\) by \(V(A;\lambda)\).

  \item Let \(\g\) be a Lie algebra and let \(x \in \g\). For an \(\ad(x)\)-invariant subspace \(V \subset \g\), we define \(V_\lambda(x) := V(\ad(x); \lambda)\). For an involutive automorphism \(\tau \in \Aut(\g)\), we define
    \[\g^{\pm \tau} := \{y \in \g : \tau(y) = \pm y\}.\]
    If \(\g\) is real semisimple and \(\theta \in \Aut(\g)\) is a Cartan involution, then we define \(\fk := \g^\theta\) and \(\fp := \g^{-\theta}\).

    If \(\g\) is hermitian simple, then we denote the (up to sign unique) minimal invariant pointed generating closed convex cone by \(\Wmin\) instead of \(\Wmin(\g)\) if there is no ambiguity. Moreover, we denote by \(\Inn(\g) := \la e^{\ad \g} \ra\) the group of inner automorphisms of \(\g\).
\end{itemize}

\section{Lie algebras, convex cones, and Jordan algebras}
\label{sec:liealg-conv-jordan}

In this section, we introduce the structure theory of Lie algebras containing convex cones that is necessary in order to prove Theorem \ref{thm:herm-invclass}. Since it is crucial for our proof, we will also recall the relation between Jordan algebras and hermitian simple Lie algebras.

\subsection{Convexity in Lie algebras}
\label{sec:conv-liealg}

\begin{definition}
  A real finite dimensional Lie algebra \(\g\) is called \emph{admissible} if it contains a generating invariant closed convex subset \(C\) with \(H(C) := \{x \in \g : C + x = C\} = \{0\}\), i.e.\ \(C\) contains no affine lines.
\end{definition}

\begin{definition}
  Let \(\g\) be a Lie algebra and \(\fk \subset \g\) be a subalgebra. Then \(\fk\) is said to be \emph{compactly embedded} if the subgroup generated by \(e^{\ad \fk}\) is relatively compact in \(\Aut(\g)\).
\end{definition}

Every admissible Lie algebra \(\g\) contains a compactly embedded Cartan subalgebra \(\ft\) (cf.\ \cite[Thm.\ VII.3.10]{Ne00}) and there exists a maximal compactly embedded subalgebra \(\fk\) of \(\g\) containing \(\ft\).
Let \(\Delta := \Delta(\g_\C, \ft_\C)\) be the corresponding set of roots.
A root \(\alpha \in \Delta\) is called \emph{compact} if \(\g^\alpha_\C\) is contained in \(\fk_\C\).
Otherwise it is called \emph{non-compact}.
The set of compact roots is denoted by \(\Delta_k\) and the set of non-compact roots by \(\Delta_p\).

Let \(\Delta^+ \subset \Delta\) be a system of positive roots, i.e.\ there exists an element \(X \in i\ft\) such that \(\Delta^+ = \{\alpha \in \Delta : \alpha(X) > 0\}\) and \(\alpha(X) \neq 0\) for all \(\alpha \in \Delta\). Then we say that \(\Delta\) is \emph{adapted} if \(\beta(X) > \alpha(X)\) for all \(\alpha \in \Delta_k, \beta \in \Delta_p^+\).

A Lie algebra \(\g\) which contains a compactly embedded Cartan subalgebra is called \emph{quasihermitian} if there exists an adapted system of positive roots.
An equivalent condition is that \(\fz_\g(\fz(\fk)) = \fk\) for a maximal compactly embedded subalgebra \(\fk\) of \(\g\). Every simple quasihermitian Lie algebra is either compact or satisfies \(\fz(\fk) \neq \{0\}\). In the latter case, \(\g\) is \emph{hermitian}.

\begin{definition}
  \label{def:weyl-group-compact}
  Let \(\g\) be admissible, let \(\fk\) be a compactly embedded subalgebra of \(\g\), and let \(\ft \subset \fk\) be a compactly embedded Cartan subalgebra. Denote by \(\Delta := \Delta(\g_\C,\ft_\C)\) the corresponding set of roots. For every \(\alpha \in \Delta_k\), there exists a unique element \(\alpha^\vee \in [\g_\C^\alpha,\g_\C^{-\alpha}]\) with \(\alpha(\alpha^\vee) = 2\). We call the group \(\cW_\fk\) generated by the reflections
  \[s_\alpha : \ft \rightarrow \ft, \quad X \mapsto X - \alpha(X)\alpha^\vee, \quad \alpha \in \Delta_k,\]
  the \emph{Weyl group of the pair \((\fk, \ft)\)} (cf.\ \cite[Def.\ VII.2.8]{Ne00}).
\end{definition}

Let \(\g\) be admissible, let \(\fk \subset \g\) be a maximal compactly embedded subalgebra, and let \(\ft\) be a compactly embedded Cartan subalgebra. Let \(\Delta := \Delta(\g_\C,\ft_\C)\) be the corresponding set of roots. Then the pointed generating invariant closed convex cones \(W \subset \g\) can be classified as follows (cf. \cite[VII.3, VIII.3]{Ne00}): For a system of positive roots \(\Delta^+ \subset \Delta\), we define the convex cones
\[\Cmin := \Cmin(\Delta_p^+) := \cone\{i[Z_\alpha,\oline{Z_\alpha}] : Z_\alpha \in \g_\C^\alpha, \alpha \in \Delta_p^+\} \subset \ft\]
and
\[\Cmax := \Cmax(\Delta_p^+) := \{X \in \ft : (\forall \alpha \in \Delta_p^+) i\alpha(X) \geq 0\}.\]
  Then there exists a unique adapted system of positive roots such that \(\Cmin \subset W \cap \ft \subset \Cmax\), and \(W \cap \ft\) uniquely determines \(W\).

  Conversely, if \(C \subset \ft\) is a generating closed convex cone with \(\Cmin \subset C \subset \Cmax\) for some adapted system of positive roots which is invariant under the Weyl group \(\cW_\fk\) of \((\fk,\ft)\), then there exists a unique generating invariant closed convex cone \(W\) with \(W \cap \ft = C\) (cf.\ \cite[Thm.\ VIII.3.21]{Ne00}).

\subsection{Hermitian simple Lie algebras}
\label{sec:herm-liealg}

A simple non-compact quasihermitian Lie algebra is called \emph{hermitian}.
One can also define the hermitian simple Lie algebras as those real simple Lie algebras which contain a pointed generating invariant cone (cf.\ \cite[Thm.\ 7.25]{HN93}).

Let \(\g\) be a hermitian simple Lie algebra.
The classification of hermitian simple Lie algebras shows that, for every maximal compactly embedded subalgebra \(\fk \subset \g\), we have \(\dim \fz(\fk) = 1\).
In particular, there exists an element \(H_0 \in \fz(\fk)\) such that \(\ad(H_0)\) is diagonalizable in \(\g_\C\) with \(\spec(\ad H_0) = \{-i,0,i\}\).
We call such an element an \emph{\(H\)-element}.
For an \(H\)-element \(H_0 \in \fz(\fk)\), we define \(\Wmin(H_0)\) as the smallest pointed generating invariant closed convex cone containing \(H_0\), and \(\Wmax(H_0)\) as the negative dual cone \(-\Wmin^*(H_0)\) with respect to the Killing form of \(\g\).
These cones are up to a sign uniquely characterized by \(\Wmin(H_0) \subset \Wmax(H_0)\) and the property that, for every pointed generating invariant closed convex cone \(W \subset \g\), either \(\Wmin(H_0) \subset W \subset \Wmax(H_0)\) or \(\Wmin(H_0) \subset (-W) \subset \Wmax(H_0)\) (cf.\ \cite[Thm.\ 7.25]{HN93}).
Because of their uniqueness up to a sign, we also denote these cones by \(\Wmin(\g)\) and \(\Wmax(\g)\) or simply \(\Wmin\) and \(\Wmax\).

Let \(\g = \fk \oplus \fp\) be a Cartan decomposition of \(\g\) and let \(\fa \subset \fp\) be a maximal abelian subspace.
Let \(\Sigma \subset \fa^*\) be the restricted root system of \(\g\) and let \(r := \dim \fa =: \rk_\R \g\) be the real rank of \(\g\).
Then, according to Moore's Theorem (cf.\ \cite{Mo64}, \cite[Ch.\ III, \S 4]{Sa80}), the restricted root system \(\Sigma\) is either of type \((C_r)\) or of type \((BC_r)\), i.e.\ we either have
\begin{equation}
  \label{eq:tube-type}
  \Sigma = \{\pm(\varepsilon_j \pm \varepsilon_i) : 1 \leq i < j \leq r\} \cup \{\pm 2\varepsilon_j : 1 \leq j \leq r\} \cong C_r
\end{equation}
or
\begin{equation}
  \label{eq:non-tube-type}
\Sigma = \{\pm(\varepsilon_j \pm \varepsilon_i) : 1 \leq i < j \leq r\} \cup \{\pm \varepsilon_j, \pm 2\varepsilon_j : 1 \leq j \leq r\} \cong BC_r.
\end{equation}
We say that \(\g\) is of \emph{tube type} if \(\Sigma \cong C_r\) and that \(\g\) is of \emph{non-tube type} if \(\Sigma \cong BC_r\).
Let \(H_k\) be the coroot of \(2\varepsilon_k\) for \(k=1,\ldots,r\).
Then \(\{H_1,\ldots,H_r\}\) is a basis of \(\fa\) and we have \(\varepsilon_k(H_\ell) = \delta_{k \ell}\) for \(1 \leq \ell \leq r\).

\begin{rem}(The Weyl-group of a hermitian simple Lie algebra)
  \label{rem:herm-weylgrp}
  The Weyl-group \(\cW\) of a hermitian simple Lie algebra \(\g\) can be determined as follows: Fix a Cartan decomposition \(\g = \fk \oplus \fp\) and a maximal abelian subspace \(\fa\). Let \(r := \dim \fa = \rk_\R \g\) and let \(H_1,\ldots,H_r\) be defined as above.

  Let \(\beta\) be the Cartan-Killing form on \(\g\). Then there exists for each \(\alpha \in \fa^*\) a unique element \(A_\alpha \in \fa\) such that \(\beta(H,A_\alpha) = \alpha(H)\) for all \(H \in \fa\). The Weyl group of \(\g\) can be identified with the group generated by the reflections
  \[s_\alpha(H) = H - 2\frac{\alpha(H)}{\alpha(A_\alpha)} A_\alpha, \quad H \in \fa, \alpha \in \Sigma.\]
  (cf.\ \cite[Ch.\ VII, Cor.\ 2.13]{Hel78}). Using the root space decomposition of \(\g\), we find that \(A_{\varepsilon_k}\) is a multiple of \(H_k\) for \(1 \leq k \leq r\). A straightforward computation now shows that \(\cW\) can be identified with the group \(\{\pm 1\}^r \rtimes S_r\) of signed permutations on \(\{1,\ldots,r\}\) in the obvious way.
\end{rem}

\begin{rem}(On hyperbolic elements)
  \label{rem:hyperbolic-elements}
  Let \(\g\) be a real semisimple Lie algebra and let \(G\) be a connected Lie group with \(\L(G) = \g\).
  Recall that an \(\ad\)-diagonalizable element \(x \in \g\) is called \emph{hyperbolic} if \(\spec(\ad x) \subset \R\), and that it is called \emph{elliptic} if \(\spec(\ad x) \subset i\R\).

  Let \(\g = \fk \oplus \fp\) be a Cartan decomposition of \(\g\), let \(\fa \subset \fp\) be a maximal abelian subspace of \(\fp\), and let \(\g = \fk + \fa + \fn\) be the corresponding \emph{Iwasawa decomposition} of \(\g\) (cf.\cite[Ch.\ VI, \S 3]{Hel78}). By \cite[Ch.\ IX, Thm.\ 7.2]{Hel78}, an element \(h \in \g\) is hyperbolic if and only if it is conjugate to an element in \(\fa\), i.e.\ \(\fa \cap \cO_h \neq \emptyset\).
  
  On the Lie group level, we have the decomposition \(G = KAN\). For an element \(h \in \fa\), we are interested in the set \(\cO_h \cap \fa = \Ad(G)h \cap \fa\). From the Iwasawa decomposition of \(G\), we see that this set equals \(\Ad(K)h \cap \fa\), which equals \(\cW.h\), where \(\cW\) is the Weyl group of \(\g\) (cf.\ \cite[Ch.\ VII, \S 2]{Hel78}).
\end{rem}

Apart from the type of the restricted root system, we have the following criterion to distinguish tube type hermitian simple Lie algebras from non-tube type hermitian simple Lie algebras:

\begin{lem}
  \label{lem:tube-type-3grad}
  Let \(\g\) be a hermitian simple Lie algebra. Then \(\g\) is of tube type if and only if there exists an element \(0 \neq h \in \g\) such that \(\g = \g_{-1}(h) \oplus \g_0(h) \oplus \g_1(h)\).
\end{lem}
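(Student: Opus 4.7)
The plan is to use Moore's classification of the restricted root system $\Sigma$ (either $C_r$ or $BC_r$) together with the characterization of hyperbolic elements via conjugation into the maximal abelian subspace $\fa \subset \fp$ (Remark~\ref{rem:hyperbolic-elements}). Throughout I interpret the statement in the nontrivial sense, i.e.\ $\g_{\pm 1}(h) \neq \{0\}$, since otherwise $h = 0$ gives a tautological decomposition for any Lie algebra.

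For the forward direction ($\Rightarrow$), I would use the explicit description \eqref{eq:tube-type} of $\Sigma \cong C_r$ together with the basis $\{H_1,\ldots,H_r\}$ of $\fa$ satisfying $\varepsilon_k(H_\ell) = \delta_{k\ell}$ and set
\[
  h := \tfrac{1}{2}(H_1 + \cdots + H_r) \in \fa.
\]
A direct evaluation of $\alpha(h)$ on each root then gives $\pm 2\varepsilon_k(h) = \pm 1$, $\pm(\varepsilon_j + \varepsilon_i)(h) = \pm 1$, and $\pm(\varepsilon_j - \varepsilon_i)(h) = 0$. Combined with $\fa \subset \g_0(h)$ and the restricted root space decomposition, this produces the desired 3-grading $\g = \g_{-1}(h) \oplus \g_0(h) \oplus \g_1(h)$ with nontrivial $\g_{\pm 1}(h)$.

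For the reverse direction ($\Leftarrow$), suppose $h \in \g$ induces such a 3-grading with $\g_{\pm 1}(h) \neq \{0\}$. Then $\ad(h)$ is diagonalizable with real eigenvalues in $\{-1,0,1\}$, so $h$ is hyperbolic and nonzero. By Remark~\ref{rem:hyperbolic-elements}, $h$ is $\Inn(\g)$-conjugate to an element of $\fa$, and since the property of admitting such a 3-grading is $\Inn(\g)$-invariant, we may assume $h \in \fa$. The eigenvalues of $\ad(h)$ on $\g$ are then exactly $\{0\} \cup \{\alpha(h) : \alpha \in \Sigma\}$, all contained in $\{-1,0,1\}$. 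Assume for contradiction that $\g$ is of non-tube type, so $\Sigma \cong BC_r$ and both $\varepsilon_j$ and $2\varepsilon_j$ lie in $\Sigma$ for every $j$. Then $\varepsilon_j(h), 2\varepsilon_j(h) \in \{-1,0,1\}$ forces $\varepsilon_j(h) = 0$ for all $j$. Consequently $\alpha(h) = 0$ for every $\alpha \in \Sigma$, so $\ad(h)$ vanishes on every restricted root space and on $\fa$; thus $h$ is central in $\g$. Simplicity of $\g$ forces $h = 0$, contradicting $\g_{\pm 1}(h) \neq \{0\}$. Hence $\Sigma \cong C_r$ and $\g$ is of tube type.

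The argument is essentially a calculation in $\fa$, and the only conceptual obstacle is the reduction in the reverse direction from an arbitrary $h$ to one lying in $\fa$; this reduction is exactly what the Iwasawa-decomposition fact recalled in Remark~\ref{rem:hyperbolic-elements} provides. Once $h \in \fa$, the distinction between $C_r$ and $BC_r$ is forced by the incompatibility of $\varepsilon_j$ and $2\varepsilon_j$ both having values in $\{-1,0,1\}$ unless $\varepsilon_j(h) = 0$.
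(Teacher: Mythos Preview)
Your proof is correct and follows essentially the same approach as the paper's: reduce to $h \in \fa$ via Remark~\ref{rem:hyperbolic-elements}, then use the explicit form of $\Sigma$ as $C_r$ or $BC_r$ to see that a nontrivial 3-grading forces (and is realized by) $h = \tfrac{1}{2}\sum_k H_k$, which is compatible with $\Sigma$ exactly in the $C_r$ case. Your write-up is in fact more explicit than the paper's on the backward direction; the paper compresses the $BC_r$ obstruction into the single sentence ``But then $\ad(h)$ induces a 3-grading on $\g$ if and only if $\Sigma$ is of type $(C_r)$'', whereas you spell out the incompatibility $\varepsilon_j(h),\,2\varepsilon_j(h)\in\{-1,0,1\}\Rightarrow\varepsilon_j(h)=0$.
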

\begin{proof}
  Fix a Cartan decomposition \(\g = \fk \oplus \fp\) and a maximal abelian subspace \(\fa \subset \fp\). Let \(H_1,\ldots,H_r\) be the coroots of \(2\varepsilon_1,\ldots,2\varepsilon_r\) respectively, where \(r = \rk_\R \g\).
  By Remark \ref{rem:hyperbolic-elements}, every hyperbolic element in \(\g\) is conjugate to an element of the form \(h = \sum_{k=1}^r \lambda_k H_k \in \fa\) for some \(\lambda_1,\ldots,\lambda_r \geq 0\).
  If \(\ad(h)\) induces a 3-grading on \(\g\) in the above form, then it is easy to see that \(\lambda_k = \frac{1}{2}\) for all \(1 \leq k \leq r\).
  But then \(\ad(h)\) induces a 3-grading on \(\g\) if and only if the restricted root system \(\Sigma \subset \fa^*\) is of type \((C_r)\), which proves the claim.
\end{proof}

\begin{rem}
  \label{rem:tube-type-3grad-sign}
  Let \(\g\) be a hermitian simple Lie algebra of tube type and of real rank \(r\). Fix a Cartan decomposition \(\g = \fk \oplus \fp\) of \(\g\) and a maximal abelian subspace \(\fa \subset \fp\). Let \(H_1,\ldots,H_r \in \fa\) be defined as before.

  An element \(h \in \fa\) induces a 3-grading on \(\g\) with \(\g = \g_{-1}(h) \oplus \g_0(h) \oplus \g_1(h)\) if and only if it is of the form \(h = \sum_{k=1}^r \lambda_k H_k\) with \(\lambda_k \in \{\pm \frac{1}{2}\}\).
  Similarly, it is easy to see that \(h\) induces a 5-grading on \(\g\) with \(\g = \g_{-1}(h) \oplus \g_{-\frac{1}{2}}(h) \oplus \g_0(h) \oplus \g_{\frac{1}{2}}(h) \oplus \g_1(h)\) if and only if \(h = \sum_{k=1}^r \lambda_k H_k\) with \(\lambda_k \in \{0, \pm \frac{1}{2}\}\).

  For \(1 \leq k \leq r\), let \(H_k' := \lambda_k H_k\) and define \(\varepsilon_k' := \lambda_k \varepsilon_k\). Thus, by replacing \(H_k\) with \(H_k'\) and \(\varepsilon_k\) with \(\varepsilon_k'\), we may assume that \(\lambda_k\) is non-negative for all \(1 \leq k \leq r\).
\end{rem}

Reductive Lie algebras containing pointed generating invariant convex cones can be decomposed as follows:

\begin{lem}
  \label{lem:adm-red-liealg-ideals}
  Let \(\g\) be an admissible reductive Lie algebra. Then every simple ideal in \(\g\) is either compact or hermitian.
\end{lem}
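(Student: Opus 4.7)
The strategy is to reduce the lemma to the classification of simple admissible Lie algebras as being either compact or hermitian, and then to appeal to the characterization of hermitian simple Lie algebras from \cite{HN93} cited in Section \ref{sec:herm-liealg}. Write \(\g = \z(\g) \oplus \s_1 \oplus \cdots \oplus \s_n\) as the direct sum of its center and its simple ideals. Because \([\s_j, \s_k] = 0\) for \(j \neq k\) and \(\ad \z(\g) = 0\), one has \(\Inn(\g) = \prod_{k=1}^n \Inn(\s_k)\), acting coordinatewise on \(\g\). Let \(C \subset \g\) be a generating invariant closed convex subset with \(H(C) = \{0\}\) supplied by admissibility, and let \(p_k : \g \twoheadrightarrow \s_k\) denote the projection along the complementary summands.

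\textbf{Step 1: Admissibility descends to each simple ideal.} Set \(C_k := p_k(C)\). This is a convex subset which is \(\Inn(\s_k)\)-invariant (since \(p_k\) intertwines the \(\Inn(\g)\)-action with the \(\Inn(\s_k)\)-action) and generating (as \(p_k\) is surjective and \(C - C = \g\)). After possibly passing to closures, it is also closed. The edge \(H(C_k) \subseteq \s_k\) is an \(\Inn(\s_k)\)-invariant linear subspace, hence an ideal, so by simplicity it is either \(\{0\}\) or \(\s_k\). I rule out \(H(C_k) = \s_k\) (which would mean \(C_k = \s_k\)) by the following: using the compactly embedded Cartan subalgebra \(\ft\) of \(\g\) from \cite[Thm VII.3.10]{Ne00}, which splits as \(\ft = \z(\g) \oplus \bigoplus_j (\ft \cap \s_j)\), I would show that \(C_k = \s_k\) produces an entire affine line inside \(C\) parallel to \(\s_k\), contradicting \(H(C) = \{0\}\); the key ingredient is that the \(\Inn(\s_j)\)-actions for \(j \neq k\) fix the \(\s_k\)-component pointwise, so one can assemble translates of a single element of \(C\) by arbitrary elements of \(\s_k\) and obtain the required line. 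Hence \(H(C_k) = \{0\}\), so each \(\s_k\) is admissible.

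\textbf{Step 2: Simple admissible equals compact or hermitian.} Let \(\s\) be simple admissible. If \(\s\) is compact there is nothing to show, so assume \(\s\) is non-compact. From the generating invariant closed convex subset \(C_\s \subset \s\) with trivial edge, I would extract a pointed generating invariant closed convex cone in \(\s\): the recession cone of \(C_\s\) is a closed convex invariant cone whose edge coincides with \(H(C_\s) = \{0\}\) and is therefore pointed; its generating property follows from the classification of invariant cones in admissible Lie algebras recalled in Section \ref{sec:conv-liealg} (\(\Cmin \subset W \cap \ft \subset \Cmax\) for an adapted positive system on the compactly embedded Cartan). Then \cite[Thm 7.25]{HN93}, as quoted at the start of Section \ref{sec:herm-liealg}, forces \(\s\) to be hermitian.

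\textbf{Main obstacle.} The technical heart of the argument is Step 1: showing that the edge of the convex subset does not grow under projection to a simple ideal, i.e., \(H(p_k(C)) = \{0\}\). This is not automatic since projection can in principle enlarge the edge, but the product structure \(\Inn(\g) = \prod_k \Inn(\s_k)\) together with simplicity of each \(\s_k\) leaves only the binary alternative \(H(C_k) \in \{\{0\}, \s_k\}\), and the second alternative is incompatible with \(H(C) = \{0\}\) by the line-construction sketched above. Everything else in the plan is structural and routine given the cited results.
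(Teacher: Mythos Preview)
Your approach is genuinely different from the paper's. The paper does not project the convex set at all: it invokes \cite[Thm.~VII.3.10]{Ne00} to say that admissibility forces \(\g\) to be quasihermitian, i.e.\ \(\fz_\g(\fz(\fk)) = \fk\) for a maximal compactly embedded \(\fk\); this condition passes to each simple ideal \(\fs\) via \(\fk_\fs = \fk \cap \fs\), and a simple quasihermitian algebra is compact or hermitian. That is the whole proof.

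Your convexity route can be made to work, but Step~1 as written has a real gap. You claim that if \(H(C_k)=\fs_k\) then one can ``assemble translates of a single element of \(C\) by arbitrary elements of \(\fs_k\)'' using the \(\Inn(\fs_j)\)-actions for \(j\neq k\). Those actions fix \(\fs_k\) pointwise; they move the \emph{other} components of \(c\), not the \(\fs_k\)-component, so no translation in the \(\fs_k\)-direction arises this way. Knowing only that \(p_k(C)=\fs_k\) does not by itself produce an affine line in \(C\) (projections can enlarge the lineality space).

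The fix is to bypass Step~1 and argue by contradiction on the conclusion. Assume \(\fs_k\) is non-compact and non-hermitian. Then \(\fs_k\) has no nontrivial pointed generating invariant cone, and one checks that the only \(\Inn(\fs_k)\)-invariant closed convex subsets of \(\fs_k\) are \(\{0\}\) and \(\fs_k\) (use that \(D-D\) and \(\lim(D)\) are invariant, hence ideals or trivial, and that a bounded invariant body would force \(\Inn(\fs_k)\) to be compact). Now take \(c\in C\) with \(c_k:=p_k(c)\neq 0\); such \(c\) exists since \(C\) is generating. The \(\Inn(\fs_k)\)-orbit of \(c\) lies in \(C\) and equals \((c-c_k)+\Inn(\fs_k)\cdot c_k\), so \(C\) contains \((c-c_k)+\overline{\mathrm{conv}}(\Inn(\fs_k)\cdot c_k)=(c-c_k)+\fs_k\), an affine subspace. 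This contradicts \(H(C)=\{0\}\). Hence every non-compact simple ideal is hermitian, and Step~2 becomes unnecessary. Compared with the paper's two-line structural argument, this is longer but more self-contained, avoiding the black-box implication ``admissible \(\Rightarrow\) quasihermitian''.
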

\begin{proof}
  Every admissible Lie algebra is quasihermitian by \cite[Thm.\ VII.3.10]{Ne00}, so that we have \(\fz_\g(\fz(\fk)) = \fk\) for all maximal compactly embedded subalgebras. Let \(\fk\) be such a maximal compactly embedded subalgebra and let \(\fs\) be a simple ideal. Then \(\fk_\fs := \fk \cap \fs\) is maximal compactly embedded in \(\fs\) and, since \(\g\) is reductive, we have \(\fz_\fs(\fz(\fk_\fs)) = \fk_\fs\), so that \(\fs\) is also quasihermitian. Hence, \(\fs\) is either compact or hermitian simple (cf.\ Section \ref{sec:conv-liealg}).
\end{proof}

\subsubsection{Nilpotent elements}
\label{sec:nilpotent-elements}
Let \(\g\) be a finite-dimensional real Lie algebra. An element \(x \in \g\) is called \emph{nilpotent} if \(\ad x\) is a nilpotent endomorphism of \(\g\).

If \(\g\) is semisimple, then every nilpotent element \(x \neq 0\) can be embedded into an \emph{\(\fsl(2)\)-triple} \((h,x,y) \in \g^3\), i.e.\ we have
\[[h,x] = 2x, \quad [h,y] = -2y, \quad \text{and} \quad  [x,y] = h\]
(cf.\ \cite[Ch.\ IX, Thm.\ 7.4]{Hel78}).

A sufficient condition for \(x \in \g\) being nilpotent is the existence of an element \(h \in \g\) such that \([h,x] \in  \R x\).
To see this, consider the subalgebra \(\g_x := \R h \ltimes \R x\), where \(h\) acts by the adjoint representation.
Then \(\g_x\) is a solvable Lie algebra.
Hence, every representation of \(\g_x\) restricted to \([\g_x,\g_x] = \R x\) is nilpotent (cf.\ \cite[Cor.\ 5.4.11]{HN12}), so that in particular \(x\) is nilpotent.

For our purposes the nilpotent elements which are contained in pointed generating invariant cones are of particular interest:

\begin{definition}
  \label{def:orbit-conv-type}
  Let \(x \in \g\). Then the adjoint orbit \(\cO_x := \Inn(\g)(x)\) is called an \emph{orbit of convex type} if \(\oline{\cone(\cO_x)}\) is pointed.
\end{definition}

\begin{definition}
  \label{def:sl2-elements}
  The theory of \(\fsl(2)\)-triples is a key tool in the classification of nilpotent orbits of convex type in semisimple Lie algebras. Following \cite{HNO94}, we therefore fix the following notation for elements in \(\fsl(2,\R)\):
  \[H := \pmat{1 & 0 \\ 0 & -1}, \quad X := \pmat{0 & 1 \\ 0 & 0}, \quad Y := \pmat{0 & 0 \\ 1 & 0}, \quad T := \pmat{0 & 1 \\ 1 & 0}, \quad U := \pmat{0 & 1 \\ -1 & 0}.\]
  We have the following commutator relations:
  \[[H,X] = 2X, \quad [H,Y] = -2Y, \quad [X,Y] = H, \quad [U,T] = 2H, \quad [U,H] = -2T, \quad [H,T] = 2U.\]
\end{definition}

\begin{definition}
  \label{def:h-element}
  (cf.\ \cite{Sa80}) Let \(\g\) be a semisimple Lie algebra. Then \(H_0 \in \g\) is called an \emph{\(H\)-element} if \(\ker \ad(H_0)\) is a maximal compactly embedded subalgebra of \(\g\) and \(\spec(\ad H_0) = \{0,i,-i\}\). The pair \((\g, H_0)\) is called a \emph{Lie algebra of hermitian type}.
\end{definition}

We already noted at the beginning of Section \ref{sec:herm-liealg} that every hermitian simple Lie algebra contains an \(H\)-element which is unique up to sign for a fixed Cartan involution. In the case of the hermitian simple Lie algebra \(\g = \fsl(2,\R)\) and the Cartan involution \(\theta(X) := -X^\tran, X \in \g\), the element \(H_0 = \frac{1}{2}U\) is an \(H\)-element.

\begin{definition}
  Let \((\g, H_0)\) and \((\tilde \g, \tilde H_0)\) be semisimple Lie algebras of hermitian type.

  (a) A Lie algebra homomorphism \(\kappa : \g \rightarrow \tilde \g\) is called an \((H_1)\)-homomorphism if \(\kappa \circ \ad(H_0) = \ad(\tilde H_0) \circ \kappa\).

  (b) A Lie algebra homomorphism \(\kappa : \g \rightarrow \tilde \g\) is called an \((H_2)\)-homomorphism if \(\kappa(H_0) = \tilde H_0\).
\end{definition}

\begin{rem}
  \label{rem:h1-hom-cartan}
  Let \((\g,H_0)\) and \((\g',H_0')\) be Lie algebras of hermitian type. Then the \(H\)-element \(H_0\) determines a Cartan decomposition of \(\g\) by \(\fk := \ker(\ad H_0)\) and \(\fp := [H_0,\g]\) (cf.\ \cite[Def.\ II.1]{HNO94}). Let \(\fk' := \ker(\ad H_0')\) and \(\fp' := [H_0',\g]\). Then every \((H_1)\)-homomorphism \(\kappa: (\g,H_0) \rightarrow (\g',H_0')\) satisfies \(\kappa(\fk) \subset \fk'\) and \(\kappa(\fp) \subset \fp'\).
\end{rem}

\begin{rem}
  \label{rem:tube-type-h-element}
  (a) The Lie algebra \(\g = \fsl(2,\R)^r, r \in \N\), is of hermitian type. We introduce the following notation: For an element \(A \in \{H,X,Y,T,U\} \subset \fsl(2,\R)\) (cf.\ Definition \ref{def:sl2-elements}), we denote by \(A_k\) the image of \(A\) under the inclusion of \(\fsl(2,\R)\) into the \(k\)-th summand of \(\g\), where \(1 \leq k \leq r\). An \(H\)-element of \(\g\) is thus given by \(\frac{1}{2}U^r := \frac{1}{2}\sum_{k=1}^r U_k\).

  (b) Let \(\g\) be a hermitian simple Lie algebra with a Cartan decomposition \(\g = \fk \oplus \fp\). Let \(H_0 \in \fz(\fk)\) be an \(H\)-element of \(\g\) (Definition \ref{def:h-element}) and define \(J := \ad(H_0)\).

  Choose a maximal abelian subspace \(\fa \subset \fp\) of dimension \(r = \rk_\R \g\).
  Then, according to \cite[Lem.\ III.3]{HNO94}, \(\fs := \fa + J(\fa) + [\fa, J(\fa)]\) is a subalgebra of \(\g\) which is the image of an injective \((H_1)\)-homomorphism \(\kappa_1 : (\fsl(2,\R)^r, \frac{1}{2}U^r) \rightarrow (\g, H_0)\).
  Furthermore, there exists an \((H_1)\)-homomorphism \(\kappa_2: (\fsl(2,\R),\frac{1}{2}U) \rightarrow (\fsl(2,\R)^r,\frac{1}{2}U^r)\) with \(\kappa_2(H) = \sum_{k=1}^r H_k\) (cf.\ \cite[Prop.\ III.7]{HNO94}). Thus, \(\kappa := \kappa_1 \circ \kappa_2\) is also an \((H_1)\)-homomorphism. We set \(H' := \kappa(H)\).
  If we define \(\{\varepsilon_1,\ldots,\varepsilon_r\} \subset \fa^*\) as the dual basis of \(\{\kappa_1(H_1),\ldots,\kappa_1(H_r)\}\), then we obtain a root system of type \((C_r)\) as in \eqref{eq:tube-type} if \(\g\) is of tube type and of type \((BC_r)\) as in \eqref{eq:non-tube-type} if \(\g\) is of non-tube type (cf.\ \cite[p.\ 110]{Sa80}).
  
  The endomorphism \(\ad(H')\) induces a 5-grading
  \[\g = \g_{-2}(H') \oplus \g_{-1}(H') \oplus \g_0(H') \oplus \g_1(H') \oplus \g_2(H')\]
  and we have \(\g_{\pm 1} = \{0\}\) if and only if \(\g\) is of tube type.

  The homomorphism \(\ad_\g \circ \kappa : \fsl(2, \R) \rightarrow \g\) induces the structure of an \(\fsl(2,\R)\)-module on \(\g\).
  This module is completely reducible and each irreducible submodule is a highest weight module of weight \(\lambda\) for some \(\lambda \in \Z_{\geq 0}\), where the highest vector is an eigenvector of \(H\).
  We denote the isotypic component of a weight \(\lambda\) by \(\g^{[\lambda]}\).
  Then the isotypic decomposition of \(\g\) is given by \(\g = \g^{[0]} \oplus \g^{[1]} \oplus \g^{[2]}\) (cf.\ \cite[Ch.\ III \S 1, Lem.\ 1.2]{Sa80}) and the summand \(\g^{[1]}\) vanishes if and only \(\g\) is of tube type.
  The former condition is equivalent to \(\kappa\) being an \((H_2)\)-homomorphism, i.e.\ \(\kappa(\frac{1}{2}U) = H_0\) (cf.\ \cite[Ch.\ III \S 1, Cor.\ 1.6]{Sa80}).
  Thus, we can identify \(H_0\) with the \(H\)-element \(\frac{1}{2} \sum_{k=1}^r U_k\) of the semisimple Lie algebra \(\fsl(2,\R)^r\) if \(\g\) is of tube type.

  (c) The arguments in (b) show that every hermitian simple Lie algebra of non-tube type contains a hermitian simple Lie algebra of tube type with the same real rank as \(\g\): We define
  \[\g_\mathrm{ev} := \g_{-2}(H') \oplus \g_0(H') \oplus \g_2(H') \quad \text{and} \quad \g_t := \g_t(H') := \g_{-2}(H') \oplus [\g_{-2}(H'), \g_2(H')] \oplus \g_2(H')\]
  and claim that \(\g_t\) is a tube-type Lie algebra.
  The 5-gradings of real simple Lie algebras and the corresponding subalgebras \(\g_\mathrm{ev}\) have been classified in \cite{Kan93}.
  If \(\g\) is a non-tube type hermitian simple Lie algebra, we deduce from \cite[Table II]{Kan93} that, for the grading induced by \(\ad H\) as above, the subalgebra \(\g_\mathrm{ev}\) is of the form \(\g_\mathrm{ev}^s \oplus \fz(\g_\mathrm{ev})\) with \(\g_\mathrm{ev}^s\) hermitian simple and of tube type. In particular, we have \(\g_t = \g_\mathrm{ev}^s\). With this procedure, we can determine \(\g_t\) up to isomorphy for each non-tube type hermitian simple Lie algebra \(\g\) (cf.\ Table \ref{table:rem:non-tube-emb}). The existence of a maximal hermitian simple tube type subalgebra of \(\g\) is also shown in \cite{KW65} and \cite[Lem.\ 4.8]{O91}.
  \begin{table}[H]
    \centering
    \begin{tabular}{|c|c|c|c|}
      \hline
      \(\g\) & \(\su(p,q) \, (p > q > 0)\) & \(\so^*(4n+2) \, (n > 1)\) & \(\fe_{6(-14)}\)\\
      \hline
      \(\g_t\) & \(\su(q,q)\) & \(\so^*(4n)\) & \(\so(2,8)\) \\
      \hline
    \end{tabular}
    \caption{Hermitian simple non-tube type Lie algebras \(\g\) and their tube type subalgebras \(\g_t\) with \(\rk_\R \g = \rk_\R \g_t\) induced by 5-gradings on \(\g\).}
    \label{table:rem:non-tube-emb}
  \end{table}

  (d) Since the elements \(\kappa(X) \in \g_2(H')\) and \(\kappa(Y) \in \g_{-2}(H')\) are nilpotent elements of convex type by the proof of \cite[Thm.\ III.9]{HNO94}, they are contained in \((-\Wmin(\g)) \cup \Wmin(\g)\). In particular, the intersection \(\Wmin(\g) \cap \g_t\) is non-trivial, so that we have \(\Wmin(\g_t) \subset \Wmin(\g)\) if the sign of \(\Wmin(\g_t)\) is chosen appropriately.
\end{rem}

\subsubsection{Involutive automorphisms of hermitian simple Lie algebras}
\label{sec:inv-aut-herm}

Throughout this section, unless stated otherwise, let \(\g\) be a hermitian simple Lie algebra and let \(\g = \fk \oplus \fp\) be a Cartan decomposition of \(\g\). In view of Theorem \ref{thm:herm-invclass}, we recall a few basic facts about those involutive automorphisms \(\tau \in \Aut(\g)\) that flip the minimal invariant cone, i.e.\ \(\tau(\Wmin) = -\Wmin\).
The following lemma provides a useful criterion for this property in terms of the \(H\)-elements of \(\g\):

\begin{lemma}
  \label{lem:inthypinv-cone-helement}
  Let \(\tau \in \Aut(\g)\) be an involution that preserves \(\fk\) and \(\fp\) and let \(H_0 \in \fz(\fk)\) be an \(H\)-element of \(\g\). Then \(\tau(H_0) \in \{\pm H_0\}\). Moreover, we have \(\tau(\Wmin(H_0)) = -\Wmin(H_0)\) if and only if \(\tau(H_0) = -H_0\).
\end{lemma}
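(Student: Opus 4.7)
The plan is to reduce the lemma to two facts already recalled in Section \ref{sec:herm-liealg}: the one-dimensionality of $\fz(\fk)$ for a hermitian simple Lie algebra, and the characterization of $\Wmin(H_0)$ as the smallest invariant closed convex cone containing $H_0$.

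First I would prove that $\tau(H_0) \in \{\pm H_0\}$. Since $\tau$ preserves $\fk$ by hypothesis, it restricts to a Lie algebra automorphism of $\fk$ and therefore preserves the center $\fz(\fk)$. As $\dim \fz(\fk) = 1$ and $\tau^2 = \id$, the restriction $\tau|_{\fz(\fk)}$ acts by $\pm 1$, and because $H_0 \in \fz(\fk)$ we conclude $\tau(H_0) = \pm H_0$.

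For the equivalence, the key observation is that for any $\sigma \in \Aut(\g)$ and any $x \in \g$, the intertwining identity $\sigma \circ e^{\ad y} \circ \sigma^{-1} = e^{\ad \sigma(y)}$ shows that $\sigma$ sends $\Inn(\g)$-invariant closed convex cones to such cones, so a standard two-inclusion argument applied to $\sigma$ and $\sigma^{-1}$ yields $\sigma(\Wmin(x)) = \Wmin(\sigma(x))$. Applying this with $\sigma = \tau$ and using the first part gives
\[\tau(\Wmin(H_0)) = \Wmin(\tau(H_0)) = \Wmin(\pm H_0) = \pm \Wmin(H_0),\]
where the final equality reflects the obvious identity $\Wmin(-H_0) = -\Wmin(H_0)$. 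Since $\Wmin(H_0)$ is pointed and $H_0 \neq 0$, the element $H_0$ lies in $\Wmin(H_0)$ but not in $-\Wmin(H_0)$, so the cones $\pm \Wmin(H_0)$ are distinct. Hence the sign on the right of the displayed equation is completely determined by the sign of $\tau(H_0)$, which gives the desired equivalence $\tau(\Wmin(H_0)) = -\Wmin(H_0) \Longleftrightarrow \tau(H_0) = -H_0$.

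I do not foresee any substantial obstacle; the only step needing care is the identity $\tau(\Wmin(x)) = \Wmin(\tau(x))$, which is routine once one notices that images of $\Inn(\g)$-invariant closed convex cones under an automorphism are again such cones, and then applies the minimality characterization of $\Wmin(x)$ to both $\tau(\Wmin(x))$ and $\tau^{-1}(\Wmin(\tau(x)))$.
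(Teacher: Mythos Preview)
Your proof is correct and follows essentially the same approach as the paper: both use that $\tau$ preserves the one-dimensional center $\fz(\fk)$ for the first claim, and both deduce the equivalence from the minimality characterization of $\Wmin(H_0)$ together with the fact that automorphisms send invariant closed convex cones to invariant closed convex cones. The only cosmetic difference is that you package the second step as the general identity $\tau(\Wmin(x)) = \Wmin(\tau(x))$, whereas the paper argues the two inclusions directly.
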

\begin{proof}
  Let \(G\) be a connected Lie group with Lie algebra \(\g\) and let \(K \subset G\) be a maximal compact subgroup with Lie algebra \(\fk\). Then it is well known (cf.\ e.g.\ \cite{Ja75}, \cite{HO97}) that \(\tau\) defines an involution on the bounded symmetric domain \(G/K\), which is either holomorphic if \(\tau(H_0) = H_0\), or antiholomorphic if \(\tau(H_0) = -H_0\).
  This proves the first part.

  If \(\tau(\Wmin(H_0)) = -\Wmin(H_0)\), then \(\tau(H_0) = -H_0\) because \(H_0 \in \Wmin(H_0)\). Conversely, if \(\tau(H_0) = -H_0\), then \(-\Wmin(H_0) \subset \tau(\Wmin(H_0))\) because \(-\Wmin(H_0)\) is the minimal invariant cone containing \(-H_0\). Applying \(\tau\) on both sides yields the converse inclusion.
\end{proof}

\begin{definition}
  \label{def:auto-equiv}
  For a Lie algebra \(\g\), let \(\tau,\sigma \in \Aut(\g)\) be involutive automorphisms of \(\g\). We say that \(\tau\) and \(\sigma\) are \emph{equivalent} if there exists an automorphism \(\varphi \in \Aut(\g)\) such that \(\tau = \varphi^{-1} \circ \sigma \circ \varphi\).
\end{definition}

\begin{rem}
  \label{rem:comp-causal-herm}
  (a) Let \(\tau \in \Aut(\g)\) be an involution preserving \(\fk\) and \(\fp\) with \(\tau(\Wmin) = -\Wmin\), i.e.\ \(\tau(H_0) = -H_0\) (cf.\ Lemma \ref{lem:inthypinv-cone-helement}). Recall that \(\fz(\fk) = \R H_0\). Denote by \(\Wmin^o\) the interior of \(\Wmin\) in \(\g\). Then, by \cite[Prop.\ 7.10]{HN93}, we have \(\Wmin^o \cap \fz(\fk) \neq \emptyset\). Since \(\Wmin\) is invariant under \(-\tau\), this means that the cone \(C := \Wmin \cap \g^{-\tau}\) has inner points because \(C^o = \Wmin^o \cap \g^{-\tau}\) (cf.\ \cite[Prop.\ 1.6]{HN93}). In particular, \(C^o \cap \fk \neq \emptyset\). Hence, \((\g,\tau)\) is a \emph{compactly causal symmetric pair}, i.e.\ there exists a cone \(C \subset \g^{-\tau}\) which is invariant under \(\Inn(\g^\tau)\) such that \(C^o \cap \fk \neq \emptyset\) (cf.\ \cite[Def.\ 3.1.8]{HO97}).

  Conversely, suppose that \((\g,\tau)\) is compactly causal and that \(\g\) is hermitian simple. Then, by \cite[Prop.\ 3.1.12]{HO97}, there exists an \(H\)-element \(H_0 \in \g^{-\tau} \cap \fk\), so that \(\tau(\Wmin) = -\Wmin\). Thus, an involutive automorphism \(\tau \in \Aut(\g)\) flips the minimal invariant cone \(\Wmin\) if and only if \((\g,\tau)\) is compactly causal.

  (b) The involutive automorphisms of real simple Lie algebras have been classified by Berger in \cite{B57}. A list of all compactly causal symmetric pairs up to equivalence can be found in \cite[Thm.\ 3.2.8]{HO97}.
\end{rem}

\begin{definition}
  \label{def:inv-cayley-type}
  Let \(\tau \in \Aut(\g)\) be an involution with \(\tau(\Wmin) = -\Wmin\).
  Let \(\g^c := \g^\tau \oplus i\g^{-\tau}\) be the corresponding dual Lie algebra. Then we say that \((\g, \tau)\) is of \emph{Cayley type} if \((\g, \tau) \cong (\g^c, \tau)\).
\end{definition}

\begin{definition}
  \label{def:integral-hyperbolic}
  Let \(\g\) be a Lie algebra. An element \(h \in \g\) is called \emph{integral hyperbolic} if \(\ad(h)\) is diagonalizable with \(\spec(\ad h) \subset \Z\).
\end{definition}

\begin{prop}
  \label{prop:inv-cayley-type}
  \begin{enumerate}
    \item Let \(0 \neq h \in \g\) such that \(\g = \g_{-1}(h) \oplus \g_0(h) \oplus \g_1(h)\) and set \(\tau_h := e^{i\pi \ad h}\). Then \((\g, \tau_h)\) is of Cayley type.
    \item Let \(h \in \g\) be an integral hyperbolic element such that \(e^{i\pi \ad h}(\Wmin) = -\Wmin\). Then \((\g, \tau_h)\) is of Cayley type.
    \item Let \(\tau \in \Aut(\g)\) be an involution such that \((\g, \tau)\) is of Cayley type. Then the following holds:
      \begin{enumerate}
        \item \(\g\) is of tube type.
        \item For every \(h \in \g\) with the properties from {\rm (a)}, the involutions \(\tau\) and \(\tau_h\) are equivalent.
      \end{enumerate}
  \end{enumerate}
\end{prop}
\begin{proof}
  (a) The symmetric pair \((\g, \tau)\) is parahermitian by \cite[Prop.\ 4.1]{KK85}. Thus, \cite[Lem.\ 2.6]{O91} and \cite[Thm.\ 2.7]{O91} imply that \((\g, \tau)\) is also compactly causal. The claim now follows from \cite[Thm.\ 5.6]{O91}.

  For the proof of (b) and (c), we also refer to \cite[Thm.\ 5.6]{O91}.
\end{proof}

Let \(\tau \in \Aut(\g)\) be an involutive automorphism which leaves \(\fk\) and \(\fp\) invariant. We have seen in Lemma \ref{lem:inthypinv-cone-helement} that \(\tau(\Wmin) \in \{\pm \Wmin\}\) because \(\tau(H_0) \in \{\pm H_0\}\), where \(H_0 \in \fz(\fk)\) is an \(H\)-element of \(\g\). This statement is false for general pointed generating invariant convex cones, as the following example shows.
However, in the context of standard subspaces which we outlined in the introduction, the condition \(\tau(W) = -W\) arises naturally from the construction of the standard subspace.

\begin{example}
  For \(n \in \N, n > 1\), consider the Lie algebra
  \[\g := \su(n,n) := \left\{ \pmat{\alpha & \beta \\ \beta^* & \gamma} \in \gl(2n, \C) : \alpha^* = -\alpha, \gamma^* = -\gamma, \tr(\alpha) + \tr(\gamma) = 0 \right\}.\]
  Then \(\g\) is a hermitian simple Lie algebra of tube type with \(\g_\C = \fsl(2n,\C)\). As a Cartan involution, we fix \(\theta(X) := -X^*, X \in \g\). Then
  \[\ft := \left\{ \pmat{\alpha & 0 \\ 0 & \gamma} \in \g : \alpha = \diag(ia_1,\ldots,ia_n), \gamma = \diag(ic_1,\ldots,ic_n), a_1,c_1,\ldots,a_n,c_n \in \R\right\} \subset \fk\]
  is a compactly embedded Cartan subalgebra. Let \(X = \pmat{\alpha & 0 \\ 0 & \gamma} \in \ft\). The compact Weyl group \(\cW_\fk\) of \((\fk,\ft)\) (cf.\ Definition \ref{def:weyl-group-compact}) is generated by the permutations of the diagonal entries of \(\alpha\) and the permutations of diagonal entries of \(\gamma\). The minimal, respectively maximal, invariant cone is uniquely determined by the cone
  \[\Cmin := \left\{\pmat{\alpha & 0 \\ 0 & \gamma} \in \ft : a_1,\ldots,a_n \geq 0 \geq c_1,\ldots,c_n\right\},\]
  respectively
  \[\Cmax := \left\{\pmat{\alpha & 0 \\ 0 & \gamma} \in \ft : a_1,\ldots,a_n \geq c_1,\ldots,c_n\right\}\]
  (cf.\ \cite[p. 331]{Pa81}). The \(\cW_\fk\)-invariant convex cone
  \[C := \left\{\pmat{\alpha & 0 \\ 0 & \gamma} \in \ft : a_1,\ldots,a_n \geq 0,\, a_1,\ldots,a_n \geq c_1,\ldots,c_n\right\}\]
  satisfies \(\Cmin \subset C \subset \Cmax\), so that there exists a pointed generating invariant closed convex cone \(W \subset \g\) with \(W \cap \ft = C\) (cf.\ Section \ref{sec:conv-liealg}).

  Consider the involutive automorphism
  \[\tau : \g \rightarrow \g, \quad X \mapsto \pmat{0 & -\bbone_n \\ \bbone_n & 0}X\pmat{0 & \bbone_n \\ -\bbone_n & 0}.\]
  Then \(\tau(\Wmin) = -\Wmin\) because the \(H\)-element \(H_0 = \pmat{i\bbone_n & 0 \\ 0 & -i\bbone_n} \in \fz(\fk)\) is mapped to \(-H_0\). Moreover, we have \(\g^\tau \cong \fsl(n,\C) \times \R\) (cf.\ \cite[Ex.\ 5.10]{O91}). But \(\tau(W) \neq -W\) because, for any element \(X = \pmat{\alpha & 0 \\ 0 & \gamma} \in C\) for which \(\gamma\) has a diagonal entry with a strictly positive imaginary part, we have
  \[-\tau(X) = \pmat{-\gamma & 0 \\ 0 & -\alpha} \not\in C.\]
\end{example}

\subsection{Jordan algebras}
\label{sec:jordan-algebras}

In this section, we recall some basic facts about Jordan algebras and their relation to hermitian simple Lie algebras.

\begin{definition}
  \label{def:jordan-algebra}
  (a) Let \(V\) be a vector space over \(\K \in \{\R,\C\}\) and let
  \[\cdot : V \times V \rightarrow V, \quad (x,y) \mapsto xy := x \cdot y\]
  be a bilinear map. Then \((V,\cdot)\) is called a \emph{Jordan algebra} if
  \[x \cdot y = y \cdot x \quad \text{and} \quad x \cdot (x^2 \cdot y) = x^2 \cdot (x \cdot y)\]
  for all \(x,y \in V\).

  (b) A real Jordan algebra \(V\) is called \emph{euclidean} if there exists a scalar product \(\la \cdot, \cdot \ra\) on \(V\) which is associative in the sense that \(\la x \cdot y, z \ra = \la y, x \cdot z \ra \text{ for all } x,y,z \in V.\)

  (c) A Jordan algebra is called \emph{simple} if it does not contain any non-trivial ideal.

  (d) Let \(V\) be a Jordan algebra. We denote the left multiplication by an element \(x \in V\) by \(L(x)\) and define the \emph{quadratic representation of \(V\)} by \(P(x) := 2L(x)^2 - L(x^2)\).

  (e) For a Jordan algebra \(V\) over \(\K\), \(x \in V\), and \(\lambda \in \K\), we define \(V_\lambda(x) := V(L(x); \lambda)\).
\end{definition}

\begin{example}
  \label{ex:KKT}
  (a) Let \((\g,\sigma)\) be a real symmetric Lie algebra and suppose that there exists a hyperbolic element \(h \in \g\) with \(\spec(\ad h) = \{-2,0,2\}\) and elements \(x,y \in \g\) such that \((h,x,y)\) is an \(\fsl_2\)-triple with
    \[\sigma h = -h, \quad \sigma x = -y, \quad \sigma y = -x.\]
    Then \(\ad h\) induces a 3-grading \(\g = \g_{-2}(h) \oplus \g_0(h) \oplus \g_2(h)\) such that \(\sigma(\g_{\pm 2}(h)) = \g_{\mp 2}(h)\). Furthermore, we can endow \(V := \g_2(h)\) with a Jordan algebra structure via
    \[a \cdot b := -\frac{1}{2} [[a, \sigma x], b] = \frac{1}{2} [[a,y],b] \quad \text{for } a,b \in V.\]
    This is also known as the \emph{Kantor--Koecher--Tits} construction (cf.\ \cite{FK94}, \cite[Ch.\ 1 \S 2.7]{Koe69}). The element \(x \in V\) is a \emph{unit element} with respect to the Jordan algebra product, i.e.\
    \[x \cdot b = -\frac{1}{2} [[x,\sigma x], b] = \frac{1}{2} [h,b] = b \quad \text{for } b \in V.\]

    (b) In particular, if \(\g\) is real semisimple and \(\sigma = \theta\) is a Cartan involution, then \(V\) as above becomes a euclidean Jordan algebra. Indeed, if \(\beta\) denotes the Cartan--Killing form of \(\g\), then \(\la x, y \ra := -\beta(x,\theta y),\, x,y \in V,\) is an associative scalar product on \(V\) (cf.\ \cite[Ch.\ II, \S 5]{Koe69}). Furthermore, \(V\) is simple if and only if \(\g\) is simple (cf. \cite[Thm.\ 1]{Koe67}).
\end{example}

For a euclidean Jordan algebra \(V\), we define \(\Omega_V := \{x^2 : x \in V\}^o\) as the interior of the set of squares of \(V\). It is self-dual in the sense that \(\Omega_V = \{x \in V : (\forall y \in V)\, \la x,y^2 \ra \geq 0\}^o\) (cf.\ \cite[Thm.\ III.2.1]{FK94}), and is a homogeneous cone, i.e.\ the automorphism group
\[G(\Omega_V) := \{g \in \GL(V) : g\Omega_V = \Omega_V\}\]
of \(\Omega_V\) acts transitively on \(\Omega_V\).

\begin{rem}
  \label{rem:tkk-onetoone}
  Let \((\g,\theta)\) be as in Example \ref{ex:KKT}(b).

  (a) The Kantor--Koecher--Tits construction from Example \ref{ex:KKT} establishes a one-to-one correspondence between the isomorphy classes of hermitian simple Lie algebras of tube type and the isomorphy classes of real simple euclidean Jordan algebras: As shown in \cite{Koe67} (see also \cite[Ch.\ 1 \S 7]{Sa80}), one can construct from a semisimple euclidean Jordan algebra \(V\) a semisimple 3-graded Lie algebra \(\g = \g_{-1} \oplus \g_0 \oplus \g_1\) with \(\g_1 = V\) and a Cartan involution \(\theta\) on \(\g\) such that \(\theta(\g_{\pm 1}) = \g_{\mp 1}\) and \(\theta(\g_0) = \g_0\).
  We refer to \cite[p.\ 213]{FK94} for a complete list of this one-to-one correspondence. 

  (b) \(\g_0\) is \(\theta\)-invariant, so that we obtain an eigenspace decomposition \(\g_0 = \g_0^\theta \oplus \g_0^{-\theta}\). This direct sum can be identified with the \emph{structure algebra} \(\str(V) := \Der(V) \oplus L(V)\) of \(V\), where
  \[\Der(V) := \{A \in \End(V) : (\forall x,y \in V)\, D(x \cdot y) = Dx \cdot y + x \cdot Dy\}\]
  is a Lie subalgebra and \(L(V)\) is closed under triple brackets (cf.\ \cite[Ch.\ I, \S 7]{Sa80}).
\end{rem}

\begin{rem}
  \label{rem:KKT-cone-liealg}
  In the context of Example \ref{ex:KKT}, the Lie algebra \(\g(\Omega_V) := \L(G(\Omega_V))\) of the automorphism group of \(\Omega_V\) is isomorphic to \(\g_0 = \ker(\ad h)\), which acts on \(V\) by inner Lie algebra automorphisms (cf.\ \cite[Ch.\ 1, \S 8]{Sa80}).
\end{rem}

\begin{definition}
  Let \(V\) be a real or complex Jordan algebra with a unit \(e\).

  (a) An element \(c \in V\) is called an \emph{idempotent} if \(c^2 = c\).

  (b) A subset \(\{c_1, \ldots, c_r\} \subset V\) is called a \emph{complete system of orthogonal idempotents} if
  \[c_i^2 = c_i, \quad c_ic_j = 0, \quad c_1 + \ldots + c_r = e \quad \text{for } i,j \in \{1,\ldots,r\}, i \neq j.\]

  (c) A complete system of orthogonal idempotents \(\{c_1,\ldots,c_r\}\) is called a \emph{Jordan frame} if each \(c_i\) (\(i \in \{1,\ldots,r\}\)) is non-zero and cannot be written as the sum of two non-zero orthogonal idempotents.

  (d) Let \(F := \{c_1,\ldots,c_r\}\) be a Jordan Frame of \(V\). We call \(r = |F|\) the \emph{rank} of \(V\). It does not depend on the choice of the Jordan frame (cf.\ \cite[Cor.\ IV.2.7]{FK94}).
\end{definition}

\begin{example}
  \label{ex:herm-jordan-frame}
  Let \(\g\) be a hermitian simple tube type Lie algebra of real rank \(r\) with a Cartan involution \(\theta\). Fix an \(H\)-element \(H_0 \in \fz(\fk)\). Recall from Remark \ref{rem:tube-type-h-element}(b) that there exists an injective \((H_1)\)-homomorphism \(\kappa: (\fsl(2,\R)^r, \frac{1}{2}\sum_{k=1}^r U_k) \rightarrow (\g,H_0)\) such that, if we identify \(\kappa(\fsl(2,\R)^r)\) with \(\fsl(2,\R)^r\), the element \(H := \sum_{k=1}^r H_k\) induces a 3-grading on \(\g\) with \(\g = \g_{-2}(H) \oplus \g_0(H) \oplus \g_2(H)\). The space \(\fa := \sum_{k=1}^r \R H_k \subset \fp\) is maximal abelian and the restricted root system \(\Sigma \subset \fa^*\) is of type \((C_r)\) as in \eqref{eq:tube-type} if we define \(\{\varepsilon_1,\ldots,\varepsilon_r\}\) as the dual basis of \(\{H_1,\ldots,H_r\}\).

  Endow \(V := \g_2(H)\) with the structure of a euclidean simple Jordan algebra as in Example \ref{ex:KKT} such that \(e := \sum_{k=1}^r X_k\) is a unit element.
  By Remark \ref{rem:h1-hom-cartan}, we have \(\theta(e) = -\sum_{k=1}^r Y_k\).
  For \(1 \leq k \leq r\), the element \(X_k \in \g^{2\varepsilon_k} \subset V\) is an idempotent since
  \[X_k^2 = -\tfrac{1}{2}[[X_k,\theta(e)],X_k] = \tfrac{1}{2}[[X_k, Y_k], X_k] = \tfrac{1}{2}[H_k,X_k] = X_k.\]
  Each \(X_k\) is also a primitive idempotent: Since \(V\) is euclidean, \(X_k\) is primitive if and only if \(\dim V_1(X_k) = 1\) by the Spectral theorem \cite[Thm.\ III.1.1]{FK94}. But this follows from
  \[X_k \cdot a = -\tfrac{1}{2} [[X_k, \theta(e)], a] = \tfrac{1}{2} [H_k, a],\quad a \in V,\]
  \cite[Lem.\ IV.7]{HNO94}, and \(\dim \g_2(H_k) = 1\).
  Hence, the set \(\{X_1,\ldots,X_r\}\) is a Jordan frame of \(V\).
\end{example}

\begin{lem}
  \label{lem:jordan-herm-cone}
  Let \(\g\) be a hermitian simple Lie algebra of tube type with a Cartan decomposition \(\g = \fk \oplus \fp\) and let \(h \in \fp\) be a hyperbolic element with \(\spec(\ad h) = \{-2,0,2\}\). Let \(\g = \g_{-2}(h) \oplus \g_0(h) \oplus \g_2(h)\) be the 3-grading induced by \(\ad h\) and endow \(V := \g_2(h)\) with the structure of a simple euclidean Jordan algebra as in {\rm Example \ref{ex:KKT}}. Let \(\Wmin \subset \Wmax \subset \g\) be the minimal, respectively maximal, invariant proper cones in \(\g\). Then \(\Wmin \cap \g_2(h) = \Wmax \cap \g_2(h) \in \{\pm \oline{\Omega_V}\}\).
\end{lem}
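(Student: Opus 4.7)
My plan is to establish the chain of inclusions
\[
\pm \overline{\Omega_V} \;\subset\; \Wmin \cap V \;\subset\; \Wmax \cap V \;\subset\; \pm \overline{\Omega_V},
\]
with a common sign, and collapse it via the trivial inclusion $\Wmin \subset \Wmax$. Only the two outer inclusions require work.

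\emph{Lower inclusion.} Fix a Jordan frame $\{c_1,\ldots,c_r\}$ of $V$; each primitive idempotent $c_k \in V = \g_2(h)$ is in particular nilpotent in $\g$. Reducing to the canonical model of Example \ref{ex:herm-jordan-frame} applied to the tube-type Lie subalgebra of $\g$ generated by $V + \g_0(h) + \theta V$, one identifies each $c_k$ with a root vector $X_k$ which is a nilpotent element of convex type by Remark \ref{rem:tube-type-h-element}(d). Thus $c_k \in \Wmin \cup (-\Wmin)$, and since all $c_k$ are mutually conjugate under a compact subgroup of $\Inn(\g)$ (realized via the Weyl group action), they share the same sign. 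After replacing $\Wmin$ by $-\Wmin$ if necessary, I assume $c_k \in \Wmin$ for every $k$. By the Spectral Theorem \cite[Thm.\ III.1.1]{FK94}, every $w \in \overline{\Omega_V}$ admits a decomposition $w = \sum_k \lambda_k c'_k$ with $\lambda_k \geq 0$ for some Jordan frame $\{c'_k\}$. The compact stabilizer $K := \{g \in G(\Omega_V)^0 : g e = e\}$ acts transitively (up to permutation) on the set of Jordan frames \cite[Thm.\ IV.2.5]{FK94}, so $w = \kappa \cdot \sum_k \lambda_k c_{\sigma(k)}$ for some $\kappa \in K$ and $\sigma \in S_r$. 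By Remark \ref{rem:KKT-cone-liealg}, $K$ is the adjoint image of a compact subgroup of $\Inn(\g)$ preserving both $V$ and $\Wmin$, so $w \in \Wmin \cap V$.

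\emph{Upper inclusion.} By definition $\Wmax = -\Wmin^{\ast}$ with respect to the Killing form $\beta$, so $v \in \Wmax$ iff $\beta(v,w) \leq 0$ for every $w \in \Wmin$. The associative scalar product on $V$ is $\langle a, b\rangle = -\beta(a, \theta b)$ (Example \ref{ex:KKT}(b)), and $\theta$ preserves $\Wmin$ because it fixes the $H$-element of $\fz(\fk)$ and normalizes $\Inn(\g)$. For $v \in \Wmax \cap V$ and $w \in \overline{\Omega_V} \subset \Wmin \cap V$ (from the first step), $\theta w$ also lies in $\Wmin$, hence $\beta(v, \theta w) \leq 0$ and $\langle v, w\rangle \geq 0$. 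Self-duality of $\overline{\Omega_V}$ with respect to $\langle\cdot,\cdot\rangle$ \cite[Thm.\ III.2.1]{FK94} then forces $v \in \overline{\Omega_V}$, completing the chain.

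The main technical obstacle lies in the lower inclusion: one must carefully verify (i) that every primitive idempotent of $V$ is a nilpotent element of convex type in the ambient Lie algebra $\g$, and (ii) that the compact group $K$ from Jordan algebra theory is the adjoint image of a subgroup of $\Inn(\g)$ simultaneously preserving $V$ and $\Wmin$. Both points become transparent after reducing to the explicit model of Example \ref{ex:herm-jordan-frame}, where the Jordan frame is given by the root vectors $X_k \in \g^{2\varepsilon_k}$ and the relevant identifications are canonical.
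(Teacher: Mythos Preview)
Your proof is correct, and the lower inclusion is essentially the paper's argument, just phrased via the spectral theorem and the transitive $K$-action on Jordan frames instead of the orbit decomposition $\overline{\Omega_V} = \bigcup_{k} G(\Omega_V)_0 X(k)$ from \cite[Prop.\ IV.3.1]{FK94}. The content is identical: partial sums of a Jordan frame lie in $\Wmin$ (up to a global sign), and the $\Inn(\g)$-action coming from $\g_0 \cong \g(\Omega_V)$ sweeps out the whole closed cone.

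The genuine difference is in the upper inclusion. The paper argues in two steps: first, every element of $\Wmax \cap V$ is nilpotent of convex type, hence already in $\Wmin$ by \cite[Thm.\ III.9]{HNO94}, giving $\Wmax \cap V = \Wmin \cap V$; then it invokes the classification of nilpotent convex-type orbits \cite[Prop.\ V.9]{HNO94} to force any such element into $\overline{\Omega_V} \cup (-\overline{\Omega_V})$. Your duality argument bypasses \cite[Prop.\ V.9]{HNO94} entirely: once $\overline{\Omega_V} \subset \Wmin$ is known, pairing $v \in \Wmax \cap V$ against $\theta w$ for $w \in \overline{\Omega_V}$ and using $\theta(\Wmin) = \Wmin$, $\Wmax = -\Wmin^*$, and the self-duality of $\overline{\Omega_V}$ gives $v \in \overline{\Omega_V}$ directly. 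This is more self-contained and avoids importing the orbit classification, at the cost of making the equality $\Wmin \cap V = \Wmax \cap V$ emerge only at the end of the chain rather than as a separate structural fact. Both routes are short; yours uses less external machinery.
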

\begin{proof}
  Let \(r := \rk_\R \g\) be the real rank of \(\g\).
  We first recall that \(\g_2(h)\) consists of nilpotent elements (cf.\ Section \ref{sec:nilpotent-elements}). In particular, \(\Wmax \cap \g_2(h)\) consists of nilpotent elements of convex type. Hence, we have \(\Wmax \cap \g_2(h) \subset \Wmin \cap \g_2(h)\) by \cite[Thm.\ III.9]{HNO94}.
  Let \(X_1,\ldots,X_r\) be defined as in Example \ref{ex:herm-jordan-frame}. By the proof of \cite[Thm.\ III.9]{HNO94}, we may without loss of generality assume that \(X_1,\ldots,X_r \in \Wmin\).
  For \(1 \leq k \leq r\), let \(X(k) := \sum_{\ell=1}^k X_\ell\).
  Then, since \(\{X_1,\ldots,X_r\}\) is a Jordan frame, we have by \cite[Prop.\ IV.3.1]{FK94} that \(\oline{\Omega_V} = \bigcup_{k=1}^r G(\Omega_V)_0 X(k)\). Combined with Remark \ref{rem:KKT-cone-liealg}, this shows that \(\oline{\Omega_V} \subset \Wmin \cap \g_2(h)\).
  Conversely, let \(x \in \Wmin \cap \g_2(h)\). Then the adjoint orbit \(\cO_x\) is nilpotent and of convex type. Hence, we have \(x \in \oline{\Omega_V} \cup (-\oline{\Omega_V})\) by \cite[Prop.\ V.9]{HNO94}.
\end{proof}

We also recall the \emph{Peirce decomposition} of a Jordan algebra \(V\): For an idempotent \(c \in V\), the vector space \(V\) decomposes into the direct sum \(V = V_0(c) \oplus V_{\frac{1}{2}}(c) \oplus V_1(c)\). Given a Jordan frame \(\{c_1,\ldots,c_r\}\), we thus obtain a decomposition
\begin{equation}
  \label{eq:peirce-decomp-frame}
  V = \bigoplus_{i=1}^r V_i \oplus \bigoplus_{1 \leq i < j \leq r} V_{ij}, \quad \text{where } V_i := \R c_i \text{ and } V_{ij} := V_{\frac{1}{2}}(c_i) \cap V_{\frac{1}{2}}(c_j) = V_{ji} ~~ (i \neq j).
\end{equation}
Moreover, the following multiplication rule holds (cf.\ \cite[Thm.\ IV.2.1]{FK94}):
\begin{equation}
  \label{eq:peirce-mult}
  V_{ij} \cdot V_{k\ell} \subset \begin{cases} V_i + V_j & \{i,j\} = \{k,\ell\}, \\ V_{i\ell} & j = k, i \neq \ell, \\ \{0\} & \{i,j\} \cap \{k,\ell\} = \emptyset.\end{cases}
\end{equation}

\begin{rem}
  \label{rem:herm-frame-roots}
  (a) In the setting of Example \ref{ex:herm-jordan-frame}, we have the following relation between the Peirce decomposition with respect to the Jordan frame \(\{X_1,\ldots,X_r\}\) and the root space decomposition of the Lie algebra \(\g\): Fix a maximal abelian subspace \(\fa \subset \fp\). Recall that the restricted root system \(\Sigma \subset \fa^*\) is of type \((C_r)\) as in \eqref{eq:tube-type}. The elements \(H_k\) are coroots of \(2\varepsilon_k\) for \(k=1,\ldots,r\).
Thus, a straightforward computation using \cite[Lem.\ IV.7]{HNO94} shows that
\begin{equation}
  \label{eq:herm-frame-roots}
  V_i = \g^{2\varepsilon_i} \quad \text{and} \quad V_{ij} = \g^{\varepsilon_i + \varepsilon_j} \quad \text{for } i,j \in \{1,\ldots,r\}, i \neq j.
\end{equation}

(b) Let \(V\) be a real semisimple euclidean Jordan algebra and let \(F := \{c_1,\ldots,c_r\}\) be a Jordan frame of \(V\). Recall from Remark \ref{rem:tkk-onetoone} that we obtain from the Kantor--Koecher--Tits construction a 3-graded Lie algebra \(\g = \g_{-1} \oplus \g_0 \oplus \g_1\) with \(\g_1 = V\) and \(\g_0 = \str(V)\) and a Cartan involution \(\theta\) of \(\g\) with \(\theta(\g_{\pm 1}) = \g_{\mp 1}\) and \(\theta(\g_0) = \g_0\). Then the decomposition \(\str(V) = \Der(V) \oplus L(V)\) is the Cartan decomposition of \(\str(V)\) corresponding to \(\theta\lvert_{\g_0}\) (cf.\ \cite[Ch.\ 1 \S 8]{Sa80}). Moreover, \(\fa := \spann(L(F)) \subset L(V)\) is maximal abelian in \(L(V)\).

Actually, \(\fa\) is maximal abelian in \(\fp = \g^{-\theta}\). One can show that the restricted root system \(\Sigma \subset \fa^*\) is of type \((C_r)\) as in \eqref{eq:tube-type} if one defines the linear functionals \(\varepsilon_k \in \fa^*\) by \(\varepsilon_k(L(c_\ell)) := \frac{1}{2}\delta_{k \ell}\) for \(1 \leq k,\ell \leq r\) (cf.\ \cite[p.\ 212]{FK94}).
\end{rem}

\begin{lem}
  \label{lem:squares-peirce}
  Let \(V\) be a euclidean Jordan algebra with unit element \(e\) and a Jordan frame \(\{c_1,\ldots,c_r\}\). Let \(y \in \{x^2 : x \in V\}\) and let \(y = \sum_{i=1}^r \lambda_i c_i + \sum_{1 \leq i < j \leq r} y_{ij}\) be the Peirce decomposition of \(y\). If \(\lambda_i = 0\) for some \(i \in \{1,\ldots,r\}\), then \(y_{\ell i} = y_{i j} = 0\) for all \(\ell < i < j\).
\end{lem}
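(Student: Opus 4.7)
The plan is to show that the hypothesis $\lambda_i = 0$ forces $x$ itself to lie in the Peirce subspace $V_0(c_i)$; once this is established, the multiplication rules \eqref{eq:peirce-mult} will immediately yield the vanishing of $y_{\ell i}$ and $y_{ij}$ for all $\ell < i < j$.

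First I would use that the Peirce decomposition \eqref{eq:peirce-decomp-frame} is orthogonal with respect to the associative scalar product $\la \cdot, \cdot \ra$ on $V$ (cf.\ \cite[Prop.\ III.1.5]{FK94}). Combined with associativity of the form this gives
\[\lambda_i \la c_i, c_i \ra \;=\; \la y, c_i \ra \;=\; \la x \cdot x, c_i \ra \;=\; \la x, L(c_i) x \ra.\]
The operator $L(c_i)$ is self-adjoint (again by associativity) and has eigenvalues $0, \tfrac{1}{2}, 1$ on the Peirce subspaces $V_0(c_i)$, $V_{\frac{1}{2}}(c_i)$, $V_1(c_i)$, so it is positive semidefinite. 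Hence $\lambda_i = 0$ forces $L(c_i) x = 0$, i.e.\ $x \in V_0(c_i)$. Using \eqref{eq:peirce-decomp-frame}, this means
\[x \;=\; \sum_{j \neq i} \mu_j\, c_j \;+\; \sum_{\substack{j<k\\ i \notin \{j,k\}}} x_{jk},\]
so no Peirce summand of $x$ touches the index $i$.

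Finally I would read off the Peirce components of $y = x^2$ directly from \eqref{eq:peirce-mult}. A product $c_j c_{j'}$ lands in $V_j$ and vanishes unless $j = j'$; a mixed product $c_j \cdot x_{km}$ (with $i \notin \{k,m\}$) lies in $V_{km}$; and a product $x_{jk} \cdot x_{k'm}$ ends up in $V_j + V_k$ when $\{j,k\} = \{k',m\}$, or in some $V_{a\ell}$ with $a, \ell$ drawn from $\{j,k,k',m\}$, none of which equal $i$. Hence no summand of $x^2$ contributes to $V_i$ or to any $V_{i\ell}$, which proves the claim. The only slightly non-routine step is the implication $\la x, L(c_i) x \ra = 0 \Rightarrow x \in V_0(c_i)$, which rests essentially on the Euclidean hypothesis through the positivity of $L(c_i)$; the rest is a direct bookkeeping exercise with the Peirce rules.
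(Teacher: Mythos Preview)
Your proof is correct and follows the same overall shape as the paper's: first show that $\lambda_i = 0$ forces all Peirce components of $x$ touching the index $i$ to vanish (i.e.\ $x \in V_0(c_i)$), and then read off the claim from the multiplication rules \eqref{eq:peirce-mult}. The only difference is in how the first step is executed. The paper expands $p_i(x^2)$ directly using the identity $x_{ij}^2 = \tfrac{1}{2}\|x_{ij}\|^2(c_i + c_j)$ from \cite[Prop.\ IV.1.4]{FK94}, obtaining $\lambda_i c_i = \mu_i^2 c_i + \tfrac{1}{2}\sum_{j \neq i}\|x_{ij}\|^2 c_i$ as a sum of non-negative terms. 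You instead observe that $\lambda_i\|c_i\|^2 = \la x, L(c_i) x\ra$ and invoke positive semidefiniteness of $L(c_i)$. These are really two packagings of the same positivity: expanding your quadratic form in the Peirce basis reproduces exactly the paper's sum. Your version is slightly more conceptual and avoids citing the explicit square formula; the paper's is more hands-on. Either way the argument goes through.
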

\begin{proof}
  Let \(x \in V\) such that \(y = x^2\) and let \(x = \sum_{i=1}^r \mu_i c_i + \sum_{1 < i < j \leq r} x_{ij}\) be the Peirce decomposition of \(x\).
  For \(k \in \{1,\ldots,r\}\), let \(p_k : V \rightarrow V_k\) be the projection onto \(V_k\) along \eqref{eq:peirce-decomp-frame}. We compute \(p_k(x^2)\) in terms of the Peirce decomposition of \(x\): According to \cite[Prop.\ IV.1.4]{FK94}, we have \(x_{ij}^2 = \frac{1}{2}\|x_{ij}\|^2(c_i + c_j)\) for \(1 \leq i < j \leq r\). Using \eqref{eq:peirce-mult}, we see that
  \[p_k(y) = \lambda_k c_k = p_k(x^2) = \mu_k^2 c_k + \frac{1}{2} \left( \sum_{j=k+1}^r \|x_{kj}\|^2 + \sum_{j=1}^{k-1} \|x_{jk}\|^2 \right) c_k.\]
  Hence, if \(\lambda_k = 0\), then the right hand side of the above equation also vanishes, which proves the claim because of the multiplication rules in \eqref{eq:peirce-mult}.
\end{proof}

\begin{rem}
  Lemma \ref{lem:squares-peirce} generalizes the fact that, if the \(i\)-th diagonal entry of a positive semidefinite matrix vanishes, then all entries in the \(i\)-th row and column also vanish.

  Moreover, one can even show that \(\|y_{ij}\|^2 \leq 2y_i y_j\) for every square \(y = \sum_{i=1}^r y_i + \sum_{1 \leq i < j \leq r} y_{ij} \in V\) and \(1 \leq i < j \leq r\) (cf.\ \cite[p.\ 80]{FK94}).
\end{rem}

The Peirce decomposition allows us to realize real simple euclidean Jordan algebras as subalgebras of real simple euclidean Jordan algebras of the same family with a larger rank. This can be achieved as follows: For a real euclidean Jordan algebra \(V\) of rank \(r\), we fix a Jordan frame \(\{c_1,\ldots,c_r\}\) and set \(e_j = c_1 + \ldots + c_j\) for \(1 \leq j < r\). Then \(V^{(j)} := V_1(e_j)\) is a Jordan subalgebra of \(V\). Obviously, the cone \(\Omega^{(j)} = \oline{\Omega_{V^{(j)}}}\) of squares in \(V^{(j)}\) is given by \(\oline{\Omega_V} \cap V^{(j)}.\) From the Peirce decomposition \(V = \bigoplus_{i=1}^r V_i \oplus \bigoplus_{1 \leq k < \ell \leq r} V_{k \ell}\), we see that
\begin{equation}
  \label{eq:jordan-simple-subalg-peirce}
  V^{(j)} = \bigoplus_{i=1}^j V_i \oplus \bigoplus_{1 \leq k < \ell \leq j} V_{k \ell}.
\end{equation}

\begin{prop}
  \label{prop:jordan-simple-subalg}
  Let \(V\) be a real simple euclidean Jordan algebra of rank \(r\) and let \(\{c_1,\ldots,c_r\}\) be a Jordan frame of \(V\). Then the subalgebra \(V^{(j)} := V_1(c_1 + \ldots + c_j)\) is simple for all \(1 \leq j \leq r\). Moreover, an idempotent \(c \in V^{(j)}\) is primitive in \(V^{(j)}\) if and only if it is primitive in \(V\) and \(c \bot \{c_k : j < k \leq r\}\).
\end{prop}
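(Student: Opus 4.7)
The plan is to first show that \(V^{(j)}\) is a simple euclidean Jordan algebra of rank \(j\) with \(\{c_1,\ldots,c_j\}\) as a Jordan frame, and then to deduce the primitivity characterization through a frame-extension argument.

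First I would verify that \(V^{(j)}\) is a unital Jordan subalgebra of \(V\) with unit \(e_j := c_1 + \ldots + c_j\): this follows directly from the Peirce multiplication rules \eqref{eq:peirce-mult} together with the decomposition \eqref{eq:jordan-simple-subalg-peirce}. The restriction of the associative scalar product of \(V\) makes \(V^{(j)}\) a euclidean Jordan algebra. Each \(c_i\) with \(1 \leq i \leq j\) is primitive in \(V^{(j)}\), since \(V^{(j)}_1(c_i) \subset V_1(c_i) = \R c_i\) together with \(c_i \in V^{(j)}\) forces equality; hence \(\{c_1,\ldots,c_j\}\) is a Jordan frame of \(V^{(j)}\) and \(\rk V^{(j)} = j\). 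Simplicity of \(V^{(j)}\) would then be obtained from the standard criterion that a semisimple euclidean Jordan algebra of rank at least two is simple if and only if, with respect to any Jordan frame, all off-diagonal Peirce spaces are nonzero (equivalently, have a common dimension \(d \geq 1\); cf.\ \cite[Ch.\ IV]{FK94}). Since \(V\) is simple, this holds for \(\{c_1,\ldots,c_r\}\), and by \eqref{eq:jordan-simple-subalg-peirce} the off-diagonal Peirce spaces of \(V^{(j)}\) are precisely the \(V_{k\ell}\) with \(1 \leq k < \ell \leq j\). The case \(V^{(1)} = \R c_1\) is trivially simple.

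For the primitivity characterization, suppose first that \(c \in V^{(j)}\) is primitive in \(V\). Then \(V^{(j)}_1(c) \subset V_1(c) = \R c\) gives primitivity of \(c\) in \(V^{(j)}\); moreover \(V^{(j)} = V_1(e_j)\) and \(c_k \in V_1(e_r - e_j) = V_0(e_j)\) for \(k > j\), so the Peirce rule \(V_1(e_j) \cdot V_0(e_j) = 0\) yields \(c \cdot c_k = 0\). Conversely, assume \(c \in V^{(j)}\) is primitive in \(V^{(j)}\). Since \(V^{(j)}\) is simple of rank \(j\), one can extend \(\{c\}\) to a Jordan frame \(\{c, c_2', \ldots, c_j'\}\) of \(V^{(j)}\); set
\[\mathcal{F} := \{c, c_2', \ldots, c_j'\} \cup \{c_{j+1}, \ldots, c_r\}.\]
The elements of \(\mathcal{F}\) are pairwise orthogonal nonzero idempotents of \(V\): orthogonality within each group is immediate, and mixed orthogonality follows from \(c_i' \in V_1(e_j)\), \(c_k \in V_0(e_j)\) for \(k > j\) together with the Peirce rules. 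Their sum equals \(e_j + (e_r - e_j) = e_r\), so \(\mathcal{F}\) is a complete system of orthogonal idempotents of cardinality \(r = \rk V\). Since every idempotent decomposes by the Spectral Theorem into pairwise orthogonal primitive idempotents and no such refinement of a complete system can exceed \(r\), every element of a complete system of maximal cardinality is already primitive; in particular \(c\) is primitive in \(V\).

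The main obstacle is the simplicity of \(V^{(j)}\): although the Peirce-dimension criterion gives a quick argument once granted, its proof relies on nontrivial structural facts about the Jordan-algebraic Peirce decomposition. A fallback would be to invoke the classification of simple euclidean Jordan algebras in \cite[Ch.\ V]{FK94} and verify in each family that \(V_1(c_1+\ldots+c_j)\) stays within the same family with rank \(j\); the frame-extension argument in the last paragraph, on the other hand, is essentially automatic once simplicity is in place.
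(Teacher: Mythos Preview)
Your proof is correct. The paper does not give its own argument for this proposition; it simply cites \cite[Prop.\ 4.1]{GT11}. Your direct approach---showing that \(V^{(j)}\) inherits the nontriviality of the off-diagonal Peirce spaces from \(V\) (hence is simple by the standard criterion in \cite[Ch.\ IV]{FK94}), and establishing the primitivity equivalence via a frame-extension and cardinality argument---is self-contained relative to \cite{FK94} and is essentially the natural way to prove the statement. The only remark is that the orthogonality condition \(c \perp \{c_k : j < k \leq r\}\) is, as you observe, automatic for any \(c \in V^{(j)} = V_1(e_j)\), so the substantive content of the second assertion is just that primitivity in \(V^{(j)}\) and in \(V\) coincide for idempotents of \(V^{(j)}\); your argument handles both directions cleanly.
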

\begin{proof}
  cf.\ \cite[Prop.\ 4.1]{GT11}
\end{proof}

\begin{ex}
  \label{ex:jordan-simple-subalg-hermoctonions}
  We consider the euclidean simple Jordan algebra \(V = \Herm(3,\bbO)\) of hermitian matrices over the octonions \(\bbO\). According to the classification of euclidean simple Jordan algebras, the rank of \(V\) is \(3\) and \(\dim V = 27\) (cf.\ \cite[p. 97]{FK94}). We fix a Jordan frame \(\{c_1,c_2,c_3\}\). Since we know by Proposition \ref{prop:jordan-simple-subalg} that \(V^{(1)}\) and \(V^{(2)}\) are simple, we can determine these subalgebras with the same classification and the Peirce decomposition \eqref{eq:jordan-simple-subalg-peirce} of \(V^{(j)}\): First, we note that
  \[\dim V_{ij} = \dim V_{k \ell} \quad \text{for } i,j,k,\ell \in \{1,2,3\}, i \neq j, k \neq \ell\]
  by \cite[Cor.\ IV.2.6]{FK94}, so that \(\dim V_{ij} = 8\). For \(j = 1\), we have \(\rk V^{(1)} = 1\) and \(\dim V = \dim V_1 = 1\), so that \(V^{(1)} \cong \Sym(1,\R) = \R\). For \(j = 2\), we have \(\rk V^{(2)} = 2\). By \cite[Cor.\ IV.1.5]{FK94}, every such Jordan algebra is isomorphic to a Jordan algebra of the form \(\R \times \R^{n-1}\), where the Jordan product is given by
  \[(\lambda,v)(\mu,w) := (\lambda\mu + \la v, w\ra, \lambda v + \mu w) \quad (\lambda,\mu \in \R, v,w \in \R^{n-1}),\]
  and \(\la \cdot, \cdot \ra\) is a positive definite bilinear form on \(\R^{n-1}\). Since \(\dim V^{(2)} = 10\), we have \(V^{(2)} \cong \R \times \R^{10-1}\).
\end{ex}

\subsubsection{Involutive automorphisms of real simple euclidean Jordan algebras}
\label{sec:inv-jordan}

Throughout this section, let \(V\) be a real simple euclidean Jordan algebra with unit element \(e\).
Let \(\alpha \in \Aut(V)\) be an involutive automorphism on \(V\).
Denote the corresponding eigenspace decomposition by \(V = V^\alpha \oplus V^{-\alpha}\).
Then \(V^\alpha\) is a euclidean Jordan subalgebra of \(V\) and we have the inclusions
\[V^\alpha \cdot V^{-\alpha} \subset V^{-\alpha} \quad \text{and} \quad V^{-\alpha} \cdot V^{-\alpha} \subset V^\alpha.\]

Based on the results in \cite{BH98}, we will use the following definition to characterize involutions on \(V\):

\begin{definition}
  \label{def:inv-jordan}
  Let \(V\) be a real simple euclidean Jordan algebra with unit element \(e\) and let \(\alpha \in \Aut(V)\) be an involutive automorphism of \(V\).
  \begin{enumerate}
    \item We say that \(\alpha\) is \emph{split} if \(\rk V^\alpha = \rk V\) and that it is \emph{non-split} if \(2\rk V^\alpha = \rk V\).
    \item We say that \(\alpha\) is a \emph{Peirce reflection} if there exists an idempotent \(c \in V\) such that \(\alpha = P(2c-e)\).
  \end{enumerate}
\end{definition}

\begin{example}
  \label{ex:inv-jordan-minkowski}
  Let \(n \in \N\). We endow the Minkowski space \(\cM^{n+1} := \R \times \R^n\) with the Jordan algebra structure
  \[(x,v) \cdot (y,w) := (xy + \la v,w \ra, xw + yv), \quad x,y \in \R, v,w \in \R^n,\]
  where \(\la v,w \ra := \sum_{k=1}^n v_k w_k\) denotes the standard scalar product on \(\R^n\). Then \(\cM^{n+1}\) is a real simple Jordan algebra of rank \(2\). It is euclidean because the standard scalar product on \(\R^{n+1}\) is associative in the sense of Definition \ref{def:jordan-algebra}. The element \(e := (1,0)\) is a unit element of \(\cM^{n+1}\).

  The automorphism group of \(\cM^{n+1}\) can be described as follows: Let \(\alpha\) be an automorphism and let \(W := \{0\} \times \R^n\). Then \(\alpha(W) = W\): To see this, we only have to observe that \(W\) is the orthogonal complement of \(e\) with respect to the standard scalar product on \(\R^{n+1}\), and the definition of the Jordan algebra product shows that \(\alpha\) preserves this subspace.
  In particular, \(\tilde\alpha := \alpha\lvert_W\) is a surjective linear isometry on \(W\), i.e.\ \(\tilde\alpha \in \OO_n(\R)\).
  Conversely, every surjective linear isometry on \(\R^n\) can be uniquely extended to an automorphism of \(\cM^{n+1}\), so that \(\Aut(\cM^{n+1}) \cong \OO_n(\R)\). Note that the extended automorphism is contained in the indefinite orthogonal group \(\OO_{1,n}(\R)\).

  Consider an involutive orthogonal map \(\alpha \in \OO_{1,n}(\R)\) with \(\alpha(e) = e\) and its restriction \(\tilde\alpha := \alpha\lvert_W\).
  Since \(\tilde\alpha\) is also involutive, we obtain an eigenspace decomposition \(W = W(\tilde\alpha; 1) \oplus W(\tilde\alpha; -1)\) of \(W\).
  The Jordan subalgebra \((\cM^{n+1})^\alpha = \R \times W(\tilde\alpha; 1)\) is of rank \(2\) if and only if \(W(\tilde\alpha; 1)\) is non-trivial.
  Let \(k := \dim W(\tilde\alpha;1)\).
  Then \((\cM^{n+1})^\alpha \cong \cM^{k+1}\).
  In particular, \(\alpha\) is non-split if and only if \(k=0\), i.e.\ \(\tilde\alpha = -\id_{\R^n}\), and it is a Peirce reflection if and only if \(k=1\), in which case the fixed point subalgebra is isomorphic to the non-simple Jordan algebra \(\R \times \R\).
\end{example}

\begin{prop}
  \label{prop:class-inv-jordan}
  Let \(V\) be a real simple euclidean Jordan algebra of rank \(r\) and let \(\alpha \in \Aut(V)\) be an involutive automorphism. Then \(\alpha\) is either split or non-split. Moreover, the following assertions hold:
  \begin{enumerate}
    \item \(V^\alpha\) is either simple or a direct sum \(V^{(k)} \oplus V^{(\ell)}\) for \(0 \leq k,\ell \leq r\) with \(k + \ell = r\) and for some Jordan frame \(F\). The latter case occurs if and only if \(\alpha\) is a Peirce reflection.
    \item There exists a Jordan frame \(F\) of \(V\) such that \(\alpha(F) = F\) and, for \(R := \spann(F)\), we have \(R = R^\alpha\) if \(\alpha\) is split and \(\dim R^\alpha = \dim R^{-\alpha}\) if \(\alpha\) is non-split. The rank of \(V^\alpha\) equals \(\dim R^\alpha\).
  \end{enumerate}

\end{prop}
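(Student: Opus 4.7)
My plan is to construct an $\alpha$-invariant Jordan frame of $V$ by refining a Jordan frame of the fixed subalgebra $V^\alpha$, and then to analyze the action of $\alpha$ on this frame together with its action on the associated Peirce subspaces.

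Since $V^\alpha$ is a euclidean Jordan subalgebra of $V$ containing the unit $e$, it admits a Jordan frame $\{d_1,\ldots,d_s\}$ with $s = \rk V^\alpha$. Each idempotent $d_i$ extends to a Jordan frame of $V$, so Proposition \ref{prop:jordan-simple-subalg} gives that the Peirce subalgebra $V_1(d_i)$ is simple euclidean. The restriction $\alpha|_{V_1(d_i)}$ is an involution with fixed subalgebra $(V^\alpha)_1(d_i) = \R d_i$, of rank one. By the classification of simple euclidean Jordan algebras together with Example \ref{ex:inv-jordan-minkowski}, an involution with a rank-one fixed subalgebra can occur only in an ambient algebra of rank at most two: if $\rk V_1(d_i) = 1$, then $d_i$ is primitive in $V$; if $\rk V_1(d_i) = 2$, then $V_1(d_i) \cong \cM^{n_i + 1}$ with $\alpha|_{V_1(d_i)}$ the non-split involution, producing primitive idempotents $c_i^\pm \in V$ such that $d_i = c_i^+ + c_i^-$ and $\alpha(c_i^\pm) = c_i^\mp$. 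Collecting these primitive idempotents yields an $\alpha$-invariant Jordan frame $F = \{c_1,\ldots,c_r\}$ of $V$, on which $\alpha$ acts as an involutive permutation with $|A|$ fixed points and $|B|$ 2-cycles, satisfying $|A| + 2|B| = r$ and $\rk V^\alpha = |A| + |B|$.

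The split/non-split dichotomy $|A| \in \{0,r\}$ is the principal obstacle. The approach I would take is to pass to the associated hermitian simple tube-type Lie algebra $\g$ through the Kantor--Koecher--Tits correspondence of Remark \ref{rem:tkk-onetoone}: $\alpha$ lifts to a 3-grading-preserving involution $\tilde\alpha$ of $\g$, whose induced action on the maximal abelian subspace $\fa = \spann L(F) \subset \g^{-\theta}$ permutes the basis $\{L(c_1),\ldots,L(c_r)\}$ compatibly with a restricted root system of type $(C_r)$, as in Remark \ref{rem:herm-frame-roots}. By the classification of involutive automorphisms of such hermitian Lie algebras preserving a 3-grading, i.e.\ of compactly causal symmetric pairs of tube type as enumerated in \cite{B57} and \cite{BH98}, this permutation must be either trivial or fixed-point-free, yielding the dichotomy. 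An alternative route is direct verification case by case across the five families of simple euclidean Jordan algebras.

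Once the dichotomy is in hand, part (a) follows by further inspection. In the non-split case, $V^\alpha$ is simple by the classification. In the split case, the frame $F$ may be taken pointwise $\alpha$-fixed, and $V^\alpha$ fails to be simple exactly when $\alpha$ is a Peirce reflection: a partition of $F$ into blocks $P_1, P_2$ of sizes $k$ and $\ell = r - k$ yields $V^\alpha = V^{(k)} \oplus V^{(\ell)}$ with $\alpha = P(2c - e)$ for $c = \sum_{i \in P_1} c_i$, and conversely every Peirce reflection arises this way. Part (b) is then immediate from the structure of $F$: in the split case $R = R^\alpha$, and in the non-split case $\alpha$ acts on $R = \spann F$ as a product of $r/2$ transpositions so that $\dim R^\alpha = \dim R^{-\alpha} = r/2$; in both cases $\rk V^\alpha = |A| + |B| = \dim R^\alpha$.
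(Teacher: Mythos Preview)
The paper's proof of this proposition is a bare citation to \cite{BH98}: the split/non-split dichotomy comes from Table~1.5.1 there, statement~(a) from Remark~1.5.3, and statement~(b) from Theorem~1.6.1. Your proposal is not in conflict with this; rather, you are sketching the substance of the argument in \cite{BH98} itself. In particular, your construction of an $\alpha$-invariant Jordan frame by refining a frame of $V^\alpha$ and splitting each $d_i$ inside the simple subalgebra $V_1(d_i)$ is exactly the mechanism behind \cite[Thm.~1.6.1]{BH98}, and the paper later quotes this mechanism explicitly in the proof of Lemma~\ref{lem:inv-jordan-fixedpts-rk}.

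Two remarks on your sketch. First, your key claim that an involution on a simple euclidean Jordan algebra $W$ with $W^\beta = \R e_W$ forces $\rk W \le 2$ is correct, and in fact admits a short direct proof you might include: since $W^{-\beta}\cdot W^{-\beta}\subset W^\beta=\R e_W$, one has $x\cdot y=\lambda(x,y)e_W$ for a symmetric bilinear form $\lambda$ on $W^{-\beta}$, which is positive definite because $W$ is euclidean; hence $W\cong\cM^{n}$ for $n=\dim W$, which has rank at most~$2$. This avoids invoking the full classification at that step. Second, your route to the dichotomy $|A|\in\{0,r\}$ via the KKT lift and the Lie-algebra classification in \cite{B57} is logically sound but somewhat circular in spirit relative to this paper, since later sections (Theorem~\ref{thm:inv-hermlie-jordan}, Proposition~\ref{prop:inv-herm-basis}) go in the opposite direction, reading off the Jordan involution type from the Lie-algebra data and citing the present proposition in doing so. The cleaner internal argument, which is what \cite{BH98} actually does, is to show that the subalgebras $V_1(d_i)$ are pairwise conjugate under $\Aut(V)$, so that their ranks coincide; this is the content quoted in Lemma~\ref{lem:inv-jordan-fixedpts-rk}.
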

\begin{proof}
  That \(\alpha\) is either split or non-split is a consequence of the classification in \cite[Table 1.5.1]{BH98}.
  For the proof of statement (a), we refer to \cite[Satz 2.3]{Hw69} (see also \cite[Rem.\ 1.5.3]{BH98}).
  Statement (b) is shown in \cite[Thm.\ 1.6.1]{BH98}.
\end{proof}

\begin{rem}
  \label{rem:class-inv-jordan-frame-conv}
  Note that, in the context of Proposition \ref{prop:class-inv-jordan}(b), the existence of a Jordan frame \(F\) with \(\alpha(F) = F\) and no fixed points under \(\alpha\) does not imply that \(\alpha\) is non-split: Consider for \(n \in \N\) the Jordan algebra \(V := \cM^{n+1}\) from Example \ref{ex:inv-jordan-minkowski}.
  Let \(\alpha \in O_{1,n}(\R)\) be involutive with \(\alpha(1,0) = (1,0)\) and recall that this implies that \(\alpha\) is an automorphism of the Jordan algebra \(V\).
  Suppose that the restriction \(\tilde\alpha\lvert_{\{0\} \times \R^n}\) is not \(\pm \id_{\R^n}\).
  Then \(\alpha\) is a split involution.
  For every unit vector \(v \in \R^n \setminus \{0\}\), the set \(F(v) := \{(\frac{1}{2}, \frac{1}{2}v), (\frac{1}{2}, -\frac{1}{2}v)\}\) is a Jordan frame of \(V\).
  Choose an eigenvector \(v \in \R^n\) of \(\alpha\) with \(\tilde\alpha(v) = -v\).
  Then \(F(v)\) satisfies \(\alpha(F(v)) = F(v)\) and contains no fixed points under \(\alpha\).
\end{rem}

\begin{rem}
  \label{rem:class-inv-jordan}
  (a) We refer to \cite[Table 1.5.1]{BH98} for an explicit classification of all involutive automorphisms of simple euclidean Jordan algebras.

  (b) Let \(V\) be a real simple euclidean Jordan algebra, let \(\alpha \in \Aut(V)\) be an involutive automorphism, and let \(F = \{c_1,\ldots,c_r\} \subset V\) be a Jordan frame as in Proposition \ref{prop:class-inv-jordan}(b).
  If \(\alpha\) is split, then we have \(\alpha(c_k) = c_k\) for \(1 \leq k \leq r\).
  On the other hand, if \(\alpha\) is non-split and \(r = \rk V = 2s\), then the proof of \cite[Thm.\ 1.6.1]{BH98} shows that, after renumbering the elements of \(F\) if necessary, we have \(\alpha(c_k) = c_{k+s}\) for \(1 \leq k \leq s\). In particular, all \(\alpha\)-invariant subsets of \(F\) have an even number of elements.
  
  Consider the Peirce decomposition \(V = \left(\bigoplus_{k=1}^r \R c_k\right) \oplus \bigoplus_{1 \leq i < j \leq r} V_{ij}\) of \(V\) with respect to \(F\). If \(\alpha\) is split, then the subspaces \(V_{ij}\) are invariant under \(\alpha\), and if \(\alpha\) is non-split, the direct sums
  \[V_{ij} \oplus V_{(i + s),(j + s)} \oplus V_{(i+s),j} \oplus V_{i,(j+s)} \quad \text{for } 1 \leq i < j \leq s\]
  are \(\alpha\)-invariant (cf.\ \cite[1.7.1]{BH98}). Consequently, the subalgebras \(V^{(s)} := V_1(c_1 + \ldots + c_k)\) for \(1 \leq k \leq r\) are \(\alpha\)-invariant in the split case and the subalgebras \(V_1(c_1 + c_{1 + s} + \ldots + c_k + c_{k+s}) \cong V^{(2k)}\) for \(1 \leq k \leq s\) are \(\alpha\)-invariant in the non-split case.
\end{rem}

\begin{lem}
  \label{lem:inv-jordan-fixedpts-rk}
  Let \(V\) be a real simple euclidean Jordan algebra and let \(\alpha \in \Aut(V)\) be an involutive automorphism of \(V\). Let \(c \in V^\alpha\) be a primitive idempotent of \(V^\alpha\) and let \(W := V_1(c)\). Then \(\rk W = 1\) if \(\alpha\) is split and \(\rk W = 2\) if \(\alpha\) is non-split.
\end{lem}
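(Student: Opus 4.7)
The plan is to extend $c$ to a Jordan frame of $V^\alpha$ and then invoke trace additivity in $V$ to pin down the rank of $W := V_1(c)$. Since $\alpha(c) = c$, the Peirce subalgebra $W$ is $\alpha$-invariant and satisfies $W \cap V^\alpha = V^\alpha_1(c) = \R c$, by the primitivity of $c$ in $V^\alpha$. Extend $c$ to a Jordan frame $\{c = c_1', c_2', \ldots, c_s'\}$ of $V^\alpha$ with $s := \rk V^\alpha$. The $c_i'$ are pairwise orthogonal idempotents in $V$ summing to the unit $e$, so additivity of the Jordan trace on $V$ yields
\[\rk V = \sum_{i=1}^{s} \rk_V V_1(c_i').\]

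In the split case one has $s = \rk V$, and since each summand is a positive integer, all summands must equal $1$; in particular $\rk W = 1$. In the non-split case one has $2s = \rk V$, so the average summand is $2$, and the crux is to rule out that any $c_i'$ is primitive in $V$. Suppose toward contradiction that $c_i'$ is primitive in $V$ for some $i$. Then $V_0(c_i')$ is a simple euclidean Jordan algebra of rank $2s - 1$, which one obtains by applying Proposition \ref{prop:jordan-simple-subalg} to a Jordan frame of $V$ having $c_i'$ as its last element, so that $V_0(c_i')$ is exactly the subalgebra $V^{(2s-1)}$ attached to the remaining frame elements. Moreover $\alpha$ restricts to an involutive automorphism $\alpha_0$ of $V_0(c_i')$, whose fixed subalgebra $V_0(c_i')^{\alpha_0} = (V^\alpha)_0(c_i')$ has rank $s - 1$ by trace additivity in $V^\alpha$. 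Proposition \ref{prop:class-inv-jordan}(a) would then force $\alpha_0$ to be split, i.e.\ $s - 1 = 2s - 1$, or non-split, i.e.\ $2(s-1) = 2s - 1$; both are impossible for $s \geq 1$. Hence $\rk_V V_1(c_i') \geq 2$ for every $i$, and combined with the displayed summation identity this forces $\rk_V V_1(c_i') = 2$ for all $i$. In particular $\rk W = 2$.

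The main obstacle is the lower bound $\rk_V V_1(c_i') \geq 2$ in the non-split case, i.e.\ showing that a primitive idempotent of $V^\alpha$ cannot be primitive in $V$. This is precisely where the classification result in Proposition \ref{prop:class-inv-jordan}(a) is leveraged: it forces the induced involution on the simple Peirce subalgebra $V_0(c_i')$ to obey one of only two numerical relations between the ranks of the ambient algebra and its fixed subalgebra, and both fail for the parity produced by deleting a single primitive idempotent from a non-split configuration. The remainder is a clean counting argument based on the trace additivity identity $\rk V = \sum_i \rk_V V_1(c_i')$, which is a standard feature of euclidean Jordan algebras.
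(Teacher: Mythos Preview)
Your proof is correct and takes a genuinely different route from the paper's argument. The paper invokes the result from \cite[Thm.\ 1.6.1]{BH98} that the subalgebras $V_1(c)$, for $c$ ranging over primitive idempotents of $V^\alpha$, are all conjugate under $\Aut(V)$; this reduces the question to a single well-chosen $c$, namely an element of a Jordan frame $F$ as in Proposition~\ref{prop:class-inv-jordan}(b), where the rank is immediate. Your approach avoids the conjugacy statement entirely: you extend the given $c$ to a Jordan frame of $V^\alpha$, use rank additivity $\rk V = \sum_i \rk_V V_1(c_i')$, and in the non-split case obtain the lower bound $\rk_V V_1(c_i') \geq 2$ by passing to the simple subalgebra $V_0(c_i')$ and applying the split/non-split dichotomy there to derive a parity contradiction. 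This is more self-contained in that it does not need the transitivity result from \cite{BH98}, though it still relies on Proposition~\ref{prop:class-inv-jordan} (which in turn cites \cite{BH98}).

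One minor point: the dichotomy ``$\alpha$ is either split or non-split'' is stated in the opening sentence of Proposition~\ref{prop:class-inv-jordan}, not in part~(a); part~(a) concerns the structure of $V^\alpha$. Your argument uses only the dichotomy, so the reference should be to the proposition itself rather than to~(a).
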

\begin{proof}
  It is shown in the proof of \cite[Thm.\ 1.6.1]{BH98} that the Jordan subalgebras \(V_1(c)\) for idempotents \(c \in V^\alpha\) which are primitive in \(V^\alpha\) are conjugate under automorphisms of \(V\) and are either of rank \(1\) or of rank \(2\). Hence, it suffices to show the claim for one particular \(c \in V^\alpha\) as above.

  Let \(F = \{d_1,\ldots,d_r\}\) be a Jordan frame of \(V\) with the properties from Proposition \ref{prop:class-inv-jordan}(b). If \(\alpha\) is split, then \(d_1 \in V^\alpha\) and \(V_1(d_1) = \R d_1\) because \(d_1\) is primitive in \(V\), so that \(\rk V_1(d_1) = 1\). Obviously, \(d_1\) is also primitive in \(V^\alpha\).

  Conversely, suppose that \(\rk V_1(c) = 1\), i.e.\ \(V_1(c) = \R c\), for every primitive idempotent \(c \in V^\alpha\). Then every primitive idempotent \(c \in V^\alpha\) is also primitive in \(V\), which implies \(\rk V^\alpha = \rk V\), so that \(\alpha\) is split.
\end{proof}

The following lemma is a converse to Remark \ref{rem:class-inv-jordan}(b):

\begin{lem}
  \label{lem:jordan-nonsplit-inv-subalg}
  Let \(V\) be a real simple euclidean Jordan algebra and let \(\alpha \in \Aut(V)\) be a non-split involutive automorphism of \(V\). Let \(F := \{c_1,\ldots,c_{r}\}\) be a Jordan frame of \(V\) with \(\alpha(F) = F\). If the subalgebra \(V^{(k)} := V_1(c_1 + \ldots + c_k)\) is \(\alpha\)-invariant for some \(1 \leq k \leq r\), then \(k\) is even.
\end{lem}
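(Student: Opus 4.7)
The plan is to exploit two things: first, that $\alpha$-invariance of $V^{(k)}$ forces $\alpha$ to fix the unit element of $V^{(k)}$, and second, that $\alpha$ has no fixed points among the elements of an invariant Jordan frame when $\alpha$ is non-split. A fixed-point-free involution on a finite set can only exist when that set has even cardinality, which will give $k$ even.

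First I would observe that the unit element of the subalgebra $V^{(k)}$ is the idempotent $e^{(k)} := c_1 + \ldots + c_k$. If $\alpha(V^{(k)}) = V^{(k)}$, then $\alpha|_{V^{(k)}}$ is a Jordan algebra automorphism of $V^{(k)}$, and hence must fix its unit, i.e.\ $\alpha(e^{(k)}) = e^{(k)}$. Since $\alpha(F) = F$, the image $\alpha(e^{(k)}) = \alpha(c_1) + \ldots + \alpha(c_k)$ is a sum of $k$ distinct elements of $F$. Comparing both sides and using the linear independence of the idempotents in a Jordan frame, I conclude $\{\alpha(c_1), \ldots, \alpha(c_k)\} = \{c_1, \ldots, c_k\}$, so $\alpha$ restricts to a permutation on the subset $\{c_1, \ldots, c_k\}$.

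Next I would show that this permutation has no fixed points. Suppose some $c_i$ satisfied $\alpha(c_i) = c_i$. Then $c_i \in V^\alpha$, and since $c_i$ is primitive in $V$ we have $V_1(c_i) = \R c_i$, so any decomposition of $c_i$ into orthogonal idempotents in $V^\alpha$ would be one in $V$; hence $c_i$ is also primitive in $V^\alpha$. But Lemma \ref{lem:inv-jordan-fixedpts-rk} says that for a primitive idempotent $c$ of $V^\alpha$ with $\alpha$ non-split, $\rk V_1(c) = 2$, contradicting $V_1(c_i) = \R c_i$. Therefore no $c_i \in F$ is fixed by $\alpha$, and in particular the restriction of $\alpha$ to $\{c_1, \ldots, c_k\}$ is a fixed-point-free involution on a $k$-element set, forcing $k$ to be even.

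The argument is essentially a bookkeeping one; no step looks like it requires genuinely new work, the only subtlety being to notice that Lemma \ref{lem:inv-jordan-fixedpts-rk} is exactly what rules out frame-elements being fixed when $\alpha$ is non-split. This is also the step that would fail in the split case, reflecting the fact that the conclusion is specific to non-split involutions.
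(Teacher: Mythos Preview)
Your proof is correct and follows essentially the same route as the paper: both argue that $\alpha$ permutes $\{c_1,\ldots,c_k\}$, and that any fixed point $c_\ell$ in this set would be a primitive idempotent in $V^\alpha$ with $\rk V_1(c_\ell)=1$, contradicting Lemma~\ref{lem:inv-jordan-fixedpts-rk}. The paper's version is terser---it asserts the invariance of $\{c_1,\ldots,c_k\}$ and the primitivity of $c_\ell$ in $V^\alpha$ without your explicit justifications---but the logical skeleton is identical.
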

\begin{proof}
  Suppose that \(k = \rk V^{(k)}\) is odd. Then \(\alpha\) leaves the subset \(\{c_1,\ldots,c_{k}\}\) invariant, so that it must have a fixed point in this subset. Suppose that \(c_\ell\) is such a fixed point for some \(1 \leq \ell \leq k\). Then \(c_\ell \in V^\alpha\) with \(\rk V_1(c) = 1\), which contradicts Lemma \ref{lem:inv-jordan-fixedpts-rk}.
\end{proof}

\begin{lem}
  \label{lem:jordan-nonsplit-frame}
  Let \(V\) be a real simple euclidean Jordan algebra of rank \(r = 2s\) and let \(\alpha \in \Aut(V)\) be a non-split involutive automorphism of \(V\). Let \(F := \{c_1,\ldots,c_r\}\) be a Jordan frame of \(V\) such that \(\alpha(F) = F\) and \(\alpha(\{c_1,\ldots,c_s\}) \cap \{c_1,\ldots,c_s\} = \emptyset\). Then \(F_+ := \{c_k + \alpha(c_k) : 1 \leq k \leq s\}\) is a Jordan frame of \(V^\alpha\).
\end{lem}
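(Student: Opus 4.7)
The plan is to check the four defining properties of a Jordan frame for $F_+$ in $V^\alpha$, and then deduce primitivity of its elements from a rank count rather than from a direct argument. Write $e_k := c_k + \alpha(c_k)$ for $1 \leq k \leq s$.

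First, I would observe that $\alpha(F) = F$ together with the hypothesis $\alpha(\{c_1,\ldots,c_s\}) \cap \{c_1,\ldots,c_s\} = \emptyset$ forces $\{\alpha(c_1),\ldots,\alpha(c_s)\} = \{c_{s+1},\ldots,c_r\}$ (as subsets of $F$). In particular, for any $k \neq \ell$ in $\{1,\ldots,s\}$, the four frame elements $c_k, c_\ell, \alpha(c_k), \alpha(c_\ell)$ are pairwise distinct, hence pairwise orthogonal primitive idempotents of $V$. From this and $\alpha^2 = \id$, a direct computation shows that $e_k = \alpha(e_k) \in V^\alpha$, $e_k^2 = c_k^2 + 2c_k\alpha(c_k) + \alpha(c_k)^2 = c_k + \alpha(c_k) = e_k$, and $e_k e_\ell = 0$. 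Summing gives $\sum_{k=1}^{s} e_k = \sum_{j=1}^{r} c_j = e$, and each $e_k$ is non-zero since its two summands are non-zero orthogonal idempotents. The same disjointness hypothesis shows $e_k \neq e_\ell$ for $k \neq \ell$, so $|F_+| = s$.

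For primitivity I would not try to show directly that each $e_k$ is indecomposable in $V^\alpha$. Instead I use Definition \ref{def:inv-jordan}: since $\alpha$ is non-split and $\rk V = 2s$, we have $\rk V^\alpha = s$. By Proposition \ref{prop:class-inv-jordan}(a), $V^\alpha$ is simple (non-split involutions are not Peirce reflections), so its rank is the common cardinality of any Jordan frame of $V^\alpha$. Now $F_+$ is a complete system of $s = \rk V^\alpha$ pairwise orthogonal non-zero idempotents of $V^\alpha$; if some $e_k$ decomposed as a sum of two non-zero orthogonal idempotents of $V^\alpha$, the refined system would have $s+1$ elements and could be further refined to a Jordan frame of $V^\alpha$ of cardinality $> s$, contradicting the rank. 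Hence each $e_k$ is primitive in $V^\alpha$ and $F_+$ is a Jordan frame.

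There is no serious obstacle here: the combinatorial identities are immediate, and the only conceptual input is the identity $\rk V^\alpha = s$ for non-split involutions together with the standard fact (used via the refinement argument) that a Jordan frame has length equal to the rank. The only place one must be a little careful is to invoke the non-split half of Proposition \ref{prop:class-inv-jordan}(a) to ensure $V^\alpha$ is simple, so that the notion of rank is unambiguous when the refinement argument is applied.
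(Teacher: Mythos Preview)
Your proof is correct and takes a genuinely different route from the paper's. The paper establishes primitivity of each $d_k := c_k + \alpha(c_k)$ \emph{locally}: it passes to the rank-$2$ simple subalgebra $W := V_1(d_k)$, argues by contradiction (using Lemma~\ref{lem:jordan-nonsplit-inv-subalg}) that the restriction $\alpha\lvert_W$ is again non-split, and hence that $W^\alpha$ is one-dimensional, so $W^\alpha = \R d_k$ and $d_k$ is primitive in $V^\alpha$. Your argument is instead a \emph{global} rank count: $F_+$ is a complete orthogonal system of $s$ non-zero idempotents in $V^\alpha$, and since $\rk V^\alpha = s$ by the very definition of non-split, any proper refinement would contradict that all Jordan frames of $V^\alpha$ have cardinality $s$. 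Your approach is shorter and sidesteps Lemma~\ref{lem:jordan-nonsplit-inv-subalg} entirely; the paper's approach has the advantage of yielding the sharper local statement $V_1^\alpha(d_k) = \R d_k$. Two minor remarks: the implication ``non-split $\Rightarrow$ not a Peirce reflection'' that you invoke is immediate from Proposition~\ref{prop:class-inv-jordan}(a), since a Peirce reflection has $V^\alpha \cong V^{(k)} \oplus V^{(\ell)}$ with $k+\ell = r$ and hence $\rk V^\alpha = r$; and your appeal to simplicity of $V^\alpha$ is not strictly necessary for the cardinality argument, since the rank of any euclidean Jordan algebra is the common size of its Jordan frames.
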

\begin{proof}
  Let \(1 \leq k \leq s\) and \(d_k := c_k + \alpha(c_k)\). Then \(d_k\) is an idempotent because \(\alpha(c_k) \neq c_k\). Moreover, \(F_+\) consists of pairwise orthogonal elements.
  Let \(W := V_1(d_k) = V_1(c_k + \alpha(c_k))\).
  Then \(W\) is a simple, \(\alpha\)-invariant subalgebra of rank \(2\) and \(\{c_k, \alpha(c_k)\}\) is a Jordan frame of \(W\) (cf.\ Proposition \ref{prop:jordan-simple-subalg}).
  We claim that \(\alpha_W := \alpha\lvert_W\) is non-split.
  Suppose that \(\alpha_W\) was split.
  Then, by Proposition \ref{prop:class-inv-jordan}, there would exist a Jordan frame \(S = \{e_1,e_2\}\) of \(W\) such that \(\alpha_W(e_i) = e_i\) for \(i=1,2\).
  The elements \(e_1,e_2\) are primitive in \(W\), hence primitive in \(V\) (cf.\ Proposition \ref{prop:jordan-simple-subalg}), and \(F' := \{e_1, e_2\} \cup (F \setminus \{c_k,\alpha(c_k)\})\) is an \(\alpha\)-invariant Jordan frame of \(V\).
  Since \(e_1 \in V^\alpha\), the subalgebra \(V_1(e_1)\) is \(\alpha\)-invariant.
  But this is a contradiction because \(\alpha\) is non-split (cf.\ Lemma \ref{lem:jordan-nonsplit-inv-subalg}).
  Thus, \(\alpha_W\) is non-split.
  Since \(W\) is of rank \(2\), this implies that \(W^\alpha\) is one-dimensional (cf.\ Example \ref{ex:inv-jordan-minkowski} and \cite[Table 1.5.1]{BH98}).
  Hence, we have \(W^\alpha = V_1^\alpha(d_k) = \R d_k\), which means that \(d_k\) is primitive in \(V^\alpha\).
  Since \(k\) was arbitrary, this shows that \(F_+\) is a Jordan frame of \(V^\alpha\).
\end{proof}

\section{The Lie wedge of the endomorphism semigroup of a standard subspace}
\label{sec:liewedge-std-subspace}

Let \(\g\) be a real Lie algebra, let \(W \subset \g\) be a pointed generating invariant closed convex cone, let \(\tau \in \Aut(\g)\) be an involution with \(\tau(W) = -W\), and let \(h \in \g^\tau\). For the convenience of the reader we repeat the definition of \(\g(W,\tau,h)\) from the introduction: We define
\[C_\pm(W,\tau,h) := \pm W \cap \g^{-\tau}_{\pm 1}(h), \quad \g_\pm := \g_\pm(W,\tau,h) := C_\pm(W,\tau,h) - C_\pm(W,\tau,h),\]
\[\g_\mathrm{red}(W,\tau,h) := \g_- \oplus \g^\tau_0(h) \oplus \g_+, \quad \text{and} \quad \g(W,\tau,h) := \g_- \oplus [\g_-,\g_+] \oplus \g_+.\]
The following lemma shows that the subspaces \(\g_\mathrm{red}(W,\tau,h)\) and \(\g(W,\tau,h)\) are actually Lie subalgebras:

\begin{lem}
  \label{lem:gred-subalg}
  Let \((\g,\tau)\) be a symmetric Lie algebra, let \(h \in \g^\tau\), and let \(W \subset \g\) be a pointed invariant closed convex cone. Then \(\g_\mathrm{red}(W,\tau,h)\) is a 3-graded Lie algebra.
\end{lem}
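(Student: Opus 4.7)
The plan is to verify that $\g_\mathrm{red}(W,\tau,h) := \g_- \oplus \fh_0(h) \oplus \g_+$ is closed under the Lie bracket and that the grading placing $\g_\pm$ in the $\pm 1$-eigenspace of $\ad h$ and $\fh_0(h)$ in the $0$-eigenspace is a genuine 3-grading. Since by construction $\g_\pm \subset \fq_{\pm 1}(h)$ and $\fh_0(h) \subset \fh \cap \g_0(h)$, I only need to check the four bracket inclusions
\[
[\fh_0(h), \fh_0(h)] \subset \fh_0(h), \quad [\fh_0(h), \g_\pm] \subset \g_\pm, \quad [\g_+, \g_-] \subset \fh_0(h), \quad [\g_\pm, \g_\pm] = \{0\}.
\]
The first and third are immediate from additivity of $\ad h$-eigenvalues together with $[\fh, \fh] \subset \fh$ and $[\fq, \fq] \subset \fh$.

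For the normalizing inclusion $[\fh_0(h), \g_\pm] \subset \g_\pm$, any $z \in \fh_0(h)$ commutes with $h$, so $e^{t \ad z}$ preserves $\fq_{\pm 1}(h)$; as $z \in \fh$ it also preserves $\fq$, and as an inner automorphism it preserves $\pm W$. Hence $e^{t \ad z}$ preserves $C_\pm = \pm W \cap \fq_{\pm 1}(h)$, and differentiating at $t=0$ gives $[z, \g_\pm] \subset \g_\pm$ (using that $\g_\pm$ is the linear span of the closed convex cone $C_\pm$).

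The substance of the lemma is $[\g_\pm, \g_\pm] = \{0\}$. Fix $x, y \in C_+$; the element $x$ is nilpotent in $\g$ (since $[h,x] = x$ puts it in a solvable two-dimensional subalgebra, cf.\ Section~\ref{sec:nilpotent-elements}), so $f(t) := e^{t \ad x} y = \sum_{k=0}^m \frac{t^k}{k!} (\ad x)^k y$ is a polynomial of some maximal degree $m$ with $(\ad x)^m y \neq 0$. By invariance of $W$ under $\Inn(\g)$, $f(t) \in W$ for every $t \in \R$; normalizing by $|t|^m$ and passing to the limit $t \to \pm \infty$ deposits both $(\ad x)^m y$ and $(-1)^m (\ad x)^m y$ into the closed pointed cone $W$, forcing $m$ to be even. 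Applying $\tau$ to $(\ad x)^m y \in W$ together with $\tau(W) = -W$, $\tau(x)=-x$, $\tau(y)=-y$ gives the parity conclusion directly. To exclude $m \geq 2$, I would iterate: $v := (\ad x)^m y$ is a nonzero element of $W \cap \fq_{m+1}(h)$ (its $\fq$-membership follows from $\tau(v) = (-1)^{m+1} v = -v$), and repeating the argument with $v$ in place of $y$ produces nonzero elements in arbitrarily high $\ad h$-eigenspaces, which cannot fit in the finite-dimensional $\g$.

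I expect the step excluding $m \geq 2$ to be the main obstacle; while the pointedness of $W$ combined with $\tau(W) = -W$ cleanly rules out odd $m$, excluding $m \geq 2$ requires either the finite-dimensional bookkeeping sketched above or, more elegantly, an appeal to the classification of nilpotent orbits of convex type recorded in Section~\ref{sec:nilpotent-elements} and \cite{HNO94}, by which nonzero positive $\ad h$-eigenvectors inside a pointed invariant cone commute pairwise. The symmetric argument then yields $[\g_-, \g_-] = 0$, and $\g_\mathrm{red}$ carries the asserted 3-grading.
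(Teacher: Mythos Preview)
Your treatment of the first three bracket inclusions is fine, and your parity argument correctly shows that if $x,y\in C_+$ and $m$ is the degree of the polynomial $t\mapsto e^{t\ad x}y$, then $m$ is even. The gap is in the step excluding $m\ge 2$. You propose to set $v:=(\ad x)^m y$ and ``repeat the argument with $v$ in place of $y$'', but by definition of $m$ you have $(\ad x)v=(\ad x)^{m+1}y=0$, so $e^{t\ad x}v\equiv v$ has degree $0$ and no higher $\ad h$-eigenspace is reached. There is no evident way to keep the iteration going: swapping to $e^{t\ad y}v$ or $e^{t\ad v}(\cdot)$ requires knowing some specific bracket is nonzero, and nothing in your setup guarantees that. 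So the ``finite-dimensional bookkeeping'' sketch does not close.

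The paper's proof sidesteps this entirely. Instead of working with individual elements, it passes to the nilpotent subalgebra $\ff_\pm:=\sum_{\lambda>0}\g_{\pm\lambda}(h)$ and observes that $\fn_\pm:=(W\cap\ff_\pm)-(W\cap\ff_\pm)$ is a nilpotent Lie algebra containing a pointed generating invariant cone (namely $W\cap\ff_\pm$), i.e.\ an \emph{admissible} nilpotent Lie algebra. It then invokes the known structural fact (\cite[Ex.~VII.3.21]{Ne00}) that admissible nilpotent Lie algebras are abelian. Since $\g_\pm\subset\fn_\pm$, this gives $[\g_\pm,\g_\pm]=0$ at once. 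Your fallback appeal to \cite{HNO94} is pointing in this direction but is not a precise citation for the needed statement; the clean reference is the one above. Note also that neither the paper's argument nor your primary parity argument uses $\tau(W)=-W$, which is not among the hypotheses of this lemma; your second parity argument invoking it is therefore superfluous.
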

\begin{proof}
  (cf.\ \cite[Prop.\ 4.3]{Ne19}) It is easy to see that \([\g_-,\g_+] \subset \g^\tau_0(h)\) and \([\g^\tau_0(h),\g_\pm] \subset \g_\pm\). Hence, it remains to show that \(\g_\pm\) is abelian. Consider the nilpotent subalgebra \(\ff_\pm := \sum_{\lambda > 0} \g_{\pm \lambda}(h)\). Then \(\fn_\pm := (W \cap \ff_\pm) - (W \cap \ff_\pm)\) is an admissible nilpotent Lie algebra, hence is abelian by \cite[Ex.\ VII.3.21]{Ne00}. Since \(\g_\pm \subset \fn_\pm\), this proves the claim.
\end{proof}

\begin{lem}
  \label{lem:assoc-wedge-simple}
  Let \(\g\) be a hermitian simple Lie algebra. For a pointed generating invariant closed convex cone \(W \subset \g\) and \(h \in \g\), let \(W_\pm(h) := (\pm W) \cap \g_{\pm 1}(h)\). Then \(W_\pm(h)\) either equals \((\Wmin)_\pm(h)\) or \(-(\Wmin)_\pm(h)\).
\end{lem}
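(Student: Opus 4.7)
The plan is to invoke the Kostant--Vinberg theorem to align $W$ with $\Wmin$ up to a sign, and then use that any element of $\g_{\pm 1}(h)$ is automatically nilpotent so that $W_\pm(h)$ consists of elements of nilpotent orbits of convex type; a classical theorem then forces those orbits into $\Wmin\cup(-\Wmin)$, and pointedness of $W$ pins down the sign.

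By the Kostant--Vinberg theorem recalled in the introduction there is $\varepsilon\in\{\pm 1\}$ with $\varepsilon\Wmin\subset W$. I will show $W_\pm(h)=\varepsilon\cdot(\Wmin)_\pm(h)$, which gives the two alternatives of the statement. The inclusion ``$\supset$'' is immediate from $\pm\varepsilon\Wmin\subset\pm W$ and the fact that $\g_{\pm 1}(h)$ is a linear subspace, hence invariant under negation. For the reverse, fix $x\in W_\pm(h)$. Since $[h,x]=\pm x\in\R x$, the nilpotency criterion recalled in Section \ref{sec:nilpotent-elements} shows that $x$ is nilpotent in $\g$. The inner orbit $\cO_x=\Inn(\g)x$ lies in the pointed cone $\pm W$, so $\oline{\cone(\cO_x)}\subset\pm W$ is pointed, i.e.\ $\cO_x$ is of convex type in the sense of Definition \ref{def:orbit-conv-type}. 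Invoking \cite[Thm.\ III.9]{HNO94} --- the result already used in Remark \ref{rem:tube-type-h-element}(d) and in the proof of Lemma \ref{lem:jordan-herm-cone} --- every nilpotent orbit of convex type in a hermitian simple Lie algebra is contained in $\Wmin\cup(-\Wmin)$, so $x\in\Wmin\cup(-\Wmin)$.

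To fix the sign, pointedness of $W$ combined with $\varepsilon\Wmin\subset W$ yields $W\cap(-\varepsilon\Wmin)\subset W\cap(-W)=\{0\}$ and symmetrically $(-W)\cap(\varepsilon\Wmin)=\{0\}$. Therefore $x\in W_+(h)\subset W$ forces $x\in\varepsilon\Wmin$, giving $W_+(h)\subset\varepsilon\Wmin\cap\g_1(h)=\varepsilon\cdot(\Wmin)_+(h)$; likewise $x\in W_-(h)\subset -W$ forces $x\in -\varepsilon\Wmin$, hence $W_-(h)\subset -\varepsilon\Wmin\cap\g_{-1}(h)=\varepsilon\cdot(\Wmin)_-(h)$ after unfolding the convention $(\Wmin)_-(h)=(-\Wmin)\cap\g_{-1}(h)$. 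The only substantive ingredient is the theorem locating nilpotent orbits of convex type inside $\Wmin\cup(-\Wmin)$; everything else is direct sign bookkeeping with the pointedness of $W$, so I anticipate no serious technical obstacle.
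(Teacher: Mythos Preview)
Your proof is correct and follows essentially the same approach as the paper: both reduce to the Kostant--Vinberg/\cite[Thm.\ 7.25]{HN93} alignment $\varepsilon\Wmin\subset W$, observe that elements of $\g_{\pm 1}(h)$ are nilpotent, use pointedness of $W$ to conclude the orbits are of convex type, and invoke \cite[Thm.\ III.9]{HNO94} to land in $\Wmin\cup(-\Wmin)$. The paper leaves the final sign determination implicit, whereas you spell out the pointedness argument $W\cap(-\varepsilon\Wmin)=\{0\}$ explicitly; otherwise the arguments are identical.
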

\begin{proof}
  Without loss of generality, we may assume that \(\Wmin \subset W\) (cf.\ \cite[Thm.\ 7.25]{HN93}). Then it remains to show that \(W_\pm(h) \subset (\Wmin)_\pm(h)\). Since \(W_\pm(h) \subset \g_{\pm 1}(h)\), the cone \(W_\pm(h)\) consists of nilpotent elements (cf.\ Section \ref{sec:nilpotent-elements}) and, since \(W\) is pointed, \(W_\pm(h)\) consists of nilpotent elements of convex type. By \cite[Thm.\ III.9]{HNO94}, all nilpotent elements of convex type are contained in the set \(\Wmin \cup (-\Wmin)\), which proves the claim.
\end{proof}

In view of Lemma \ref{lem:assoc-wedge-simple}, we define, for a hermitian simple Lie algebra \(\g\), an involutive automorphism \(\tau \in \Aut(\g)\) with \(\tau(\Wmin) = -\Wmin\), and \(h \in \g^\tau\),
\[C_\pm(\tau,h) := C_\pm(\Wmin,\tau,h), \quad \g_\mathrm{red}(\tau,h) := \g_\mathrm{red}(\Wmin,\tau,h), \quad \text{and} \quad \g(\tau,h) := \g(\Wmin,\tau,h).\]

\begin{lem}
  \label{lem:std-wedge-hypelements}
  Let \(\g\) be a real semisimple Lie algebra, let \(\theta \in \Aut(\g)\) be a Cartan involution, and let \(\tau \in \Aut(\g)\) be an involutive automorphism commuting with \(\theta\).
  Let \(\fa \subset \fp\) be a maximal abelian subspace in \(\fp\) such that \(\fa_\fh := \fa \cap \g^\tau\) is maximal abelian in \(\g^{\tau, -\theta}\).
  Then every element \(h \in \g^\tau\) which is hyperbolic in \(\g\) is conjugate under \(\Inn(\g^\tau)\) to an element in \(\fa \cap \g^\tau\).
\end{lem}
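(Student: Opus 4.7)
The plan is to reduce the statement to the standard conjugacy theorem for hyperbolic elements (as recalled in Remark \ref{rem:hyperbolic-elements}), but applied internally to the reductive subalgebra $\fh$ rather than to $\g$.

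First I would establish the necessary structure on $\fh$. Since $\g$ is semisimple and $\tau$ is an involution, $\fh = \g^\tau$ is reductive. Because $\theta$ commutes with $\tau$, the Cartan involution $\theta$ preserves $\fh$, and $\theta\lvert_\fh$ is a Cartan involution of $\fh$ in the reductive sense, yielding the Cartan decomposition $\fh = \fh_\fk \oplus \fh_\fp$ with $\fh_\fk := \fh \cap \fk$ and $\fh_\fp := \fh \cap \fp$. By hypothesis, $\fa_\fh := \fa \cap \fh$ is maximal abelian in $\fh_\fp$. I would also check that $h$ is hyperbolic as an element of $\fh$: since $\fh$ is $\ad_\g(h)$-invariant and $\ad_\fh(h) = \ad_\g(h)\lvert_\fh$, the diagonalizability and real spectrum are inherited by restriction.

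The final step is to apply the hyperbolic conjugacy theorem to the reductive Lie algebra $\fh$. I would decompose $\fh = \fz(\fh) \oplus [\fh,\fh]$ and correspondingly write $h = h_z + h_s$. The hyperbolic central part $h_z$ lies in $\fz(\fh) \cap \fh_\fp$, which is contained in $\fa_\fh$ by maximality of $\fa_\fh$ in $\fh_\fp$, and is fixed by $\Inn(\fh)$. The summand $h_s$ is hyperbolic in the semisimple algebra $[\fh,\fh]$, and the intersection $\fa_\fh \cap [\fh,\fh]$ is maximal abelian in $[\fh,\fh] \cap \fh_\fp$. Hence Remark \ref{rem:hyperbolic-elements} applied to $[\fh,\fh]$ with its own Iwasawa decomposition (induced by $\theta\lvert_\fh$) produces an element of $\Inn([\fh,\fh]) \subset \Inn(\fh)$ conjugating $h_s$ into $\fa_\fh \cap [\fh,\fh]$. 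Combining the two contributions, the desired $\Inn(\fh)$-conjugate of $h$ lies in $\fa_\fh$.

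The only nontrivial point I anticipate is the transfer of the semisimple hyperbolic-conjugacy theorem to the reductive setting, which is essentially bookkeeping via the central/semisimple splitting above; the rest of the argument is a direct application of the structure theory of symmetric reductive Lie algebras with commuting involutions.
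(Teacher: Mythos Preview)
Your proposal is correct and follows essentially the same route as the paper's own proof: both observe that $\fh$ is $\theta$-stable and reductive, split $\fh = \fz(\fh) \oplus [\fh,\fh]$, invoke the standard conjugacy of hyperbolic elements into a maximal abelian subspace of $[\fh,\fh]\cap\fp$ for the semisimple component, and dispose of the central component via $\fz(\fh)\cap\fp \subset \fa_\fh$. The one step you leave implicit---why the central component $h_z$ of a $\g$-hyperbolic $h$ is itself hyperbolic in $\g$ (equivalently lies in $\fp$)---is likewise left to the reader in the paper; it is most cleanly seen by first conjugating $h_s$ into $\fp$ and then reading off $h_z^\fk=0$ from the uniqueness of the elliptic--hyperbolic decomposition of the resulting element.
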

\begin{proof}
  By conjugating with an element in \(\Inn(\g^\tau)\), we may assume that \(h \in \fp \cap \g^\tau\). Now the claim follows from \cite[Lem. 7]{Ma79} and the subsequent remark in the reference.
\end{proof}

\begin{lem}
  \label{lem:invclass-herm-conj-isom}
  Let \(\g\) be a real Lie algebra, let \(W \subset \g\) be an invariant closed convex cone, and let \(\tau \in \Aut(\g)\) be an involution with \(\tau(W) = -W\).
  Let \(h_1,h_2 \in \g^\tau\) and suppose that \(\varphi(h_1) = h_2\) for some \(\varphi \in \Inn(\g^\tau)\).
  Then \(\varphi(\g(W,\tau,h_1)) = \g(W,\tau,h_2)\).
  In particular, \(\g(W,\tau,h_1)\) and \(\g(W,\tau,h_2)\) are isomorphic.
\end{lem}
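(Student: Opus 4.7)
The proof is essentially a direct verification that all the data defining $\g(W,\tau,h)$ is preserved under $\varphi$. First I would note that every $\varphi \in \Inn(\g^\tau)$ is a product of automorphisms of the form $e^{\ad x}$ with $x \in \fh = \g^\tau$, so $\varphi$ extends naturally to an automorphism of $\g$. Since $\tau(x) = x$ for $x \in \fh$, we have $\tau \circ e^{\ad x} \circ \tau^{-1} = e^{\ad \tau(x)} = e^{\ad x}$, so $\varphi$ commutes with $\tau$. In particular, $\varphi$ preserves both eigenspaces $\fh$ and $\fq$.

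Next, since $\varphi \in \Inn(\g)$ and $W$ is invariant, $\varphi(W) = W$ (and hence $\varphi(-W) = -W$). The standard intertwining identity
\[
\varphi \circ \ad(h_1) \circ \varphi^{-1} = \ad(\varphi(h_1)) = \ad(h_2)
\]
implies that $\varphi$ maps the $\pm 1$-eigenspace of $\ad(h_1)$ in $\g$ onto the $\pm 1$-eigenspace of $\ad(h_2)$, and by the preservation of $\fq$, it maps $\fq_{\pm 1}(h_1)$ onto $\fq_{\pm 1}(h_2)$. Combining these two observations gives
\[
\varphi(C_\pm(W,\tau,h_1)) = \varphi\bigl((\pm W) \cap \fq_{\pm 1}(h_1)\bigr) = (\pm W) \cap \fq_{\pm 1}(h_2) = C_\pm(W,\tau,h_2).
\]

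From here everything follows formally. Since $\g_\pm(W,\tau,h_i) = C_\pm(W,\tau,h_i) - C_\pm(W,\tau,h_i)$ and $\varphi$ is linear, $\varphi(\g_\pm(W,\tau,h_1)) = \g_\pm(W,\tau,h_2)$. Because $\varphi$ is a Lie algebra homomorphism, it maps $[\g_-(W,\tau,h_1), \g_+(W,\tau,h_1)]$ onto $[\g_-(W,\tau,h_2), \g_+(W,\tau,h_2)]$. Putting these together with the definition
\[
\g(W,\tau,h_i) = \g_-(W,\tau,h_i) \oplus [\g_-(W,\tau,h_i),\g_+(W,\tau,h_i)] \oplus \g_+(W,\tau,h_i)
\]
yields $\varphi(\g(W,\tau,h_1)) = \g(W,\tau,h_2)$, and the restriction of $\varphi$ to $\g(W,\tau,h_1)$ is a Lie algebra isomorphism onto $\g(W,\tau,h_2)$.

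There is no real obstacle; the only subtle point is ensuring that $\varphi$ actually extends to an automorphism of $\g$ that commutes with $\tau$, which is handled by the observation that $\Inn(\g^\tau)$ is generated by $e^{\ad x}$ for $x \in \fh$ and that such elements commute with $\tau$ in $\Aut(\g)$.
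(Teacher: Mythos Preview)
Your proof is correct and follows essentially the same approach as the paper's: both observe that $\Inn(\g^\tau)$ is generated by $e^{\ad x}$ with $x \in \g^\tau$, deduce that $\varphi$ commutes with $\tau$, preserves $W$, and maps $\g_\lambda(h_1)$ to $\g_\lambda(h_2)$, and conclude by intersecting. You simply spell out a few more routine details (the intertwining identity and the passage from $C_\pm$ to $\g_\pm$ and the bracket) than the paper does.
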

\begin{proof}
  Recall that \(\Inn(\g^\tau)\) is generated by elements of the form \(e^{\ad h}\) for \(h \in \g^\tau\).
  Hence, \(\tau\) and \(\varphi\) commute and \(\varphi(\g^{-\tau}) = \g^{-\tau}\). For all \(\lambda \in \R\), we have \(\varphi(\g_\lambda(h_1)) = \g_\lambda(h_2)\). Since \(W\) is invariant under \(\Inn(\g^\tau) \subset \Inn(\g)\), this shows that
  \[\varphi(\g^{-\tau} \cap \g_{\pm 1}(h_1) \cap (\pm W)) = \g^{-\tau} \cap \g_{\pm 1}(h_2) \cap (\pm W),\]
  so that \(\varphi(\g(W,\tau,h_1)) = \g(W,\tau,h_2)\).
\end{proof}

In view of Lemma \ref{lem:invclass-herm-conj-isom} and Lemma \ref{lem:std-wedge-hypelements}, it suffices to consider those subalgebras \(\g(\tau,h)\) for our classification problem for which \(h \in \fp \cap \g^{\tau} =: \g^{\tau,-\theta}\), and \(\theta\) is a Cartan involution on \(\g\) commuting with \(\tau\). We therefore define
\begin{equation}
  \label{eq:invclass-def-aux-set}
  \cA(\g) := \{(\tau,\theta) \in \Aut(\g)^2 : \tau^2 = \id_\g, \tau(\Wmin) = -\Wmin, \theta \text{ Cartan involution}, \tau\theta = \theta\tau\}
\end{equation}
for every hermitian simple Lie algebra \(\g\).

Throughout the following sections, \(\g\) is a hermitian simple Lie algebra, unless stated otherwise.
It is a well known fact (see for example \cite[p.\ 192]{Hel78}) that, for every involutive automorphism \(\tau\) of a real semisimple Lie algebra \(\g\), there exists a Cartan involution \(\theta\) of \(\g\) commuting with \(\tau\).

\subsection{Cayley type spaces and integral hyperbolic elements}
\label{sec:cayley-type-inv}

For our purposes, the following class of Cayley type involutions is particularly interesting: Consider a hyperbolic element \(h_0 \in \g\) such that \(\ad h_0\) induces a 3-grading of the form \(\g = \g_{-1}(h_0) \oplus \g_0(h_0) \oplus \g_1(h_0)\).
Then we obtain a Cayley type involution \(\tau_{h_0}\) by setting \(\g^{\tau_{h_0}} := \g_0(h_0)\) and \(\g^{-\tau_{h_0}} := \g_{-1}(h_0) \oplus \g_1(h_0)\).
We will first look at the situation where \(h \in \g^{\tau_{h_0}}\) is such that \(\g_1(h) \subset \g_1(h_0)\) and determine \(\g(\tau_{h_0},h)\).

Throughout this section, we suppose that \(\g\) is of tube type and of real rank \(r\). Choose a Cartan decomposition \(\g = \fk \oplus \fp\) and a maximal abelian subspace \(\fa \subset \fp\). Then the root system \(\Sigma \subset \fa^*\) is of type \((C_r)\) as in \eqref{eq:tube-type}. We denote by \(H_k\) the coroot of \(2\varepsilon_k\), where \(k=1,\ldots,r\), so that \(\{H_1,\ldots,H_r\}\) is a basis of \(\fa\).

We also fix pointed generating invariant cones \(\Wmin \subset \Wmax \subset \g\) and consider the integral hyperbolic element \(H = \frac{1}{2}\sum_{k=1}^r H_k\) for which the involution \(\tau := e^{i\pi \ad H}\) satisfies \(\tau(\Wmin) = -\Wmin\) (cf.\ Proposition \ref{prop:inv-cayley-type}).
Then \(\ad H\) induces a 3-grading \(\g = \g_{-1}(H) \oplus \g_0(H) \oplus \g_1(H)\) and the eigenspace decomposition of \(\g\) with respect to \(\tau\) is given by \(\g^\tau = \g_0(H)\) and \(\g^{-\tau} = \g_{-1}(H) \oplus \g_1(H)\).
We recall from Lemma \ref{lem:jordan-herm-cone} that \(V := \g_1(H)\) carries the structure of a simple euclidean Jordan such that \(\Wmin \cap \g_1(H) = \Wmax \cap \g_1(H)\) coincides, up to sign, with the set of squares \(C := \{x^2 : x \in V\}\) in \(V\), so that, without loss of generality, we have \(\Wmin \cap \g_1(H) = C\).  Moreover, there exists a Jordan frame \(\{X_1,\ldots,X_r\}\) of \(V\) such that \(\g^{2\varepsilon_k} = V_k\) and \(\g^{\varepsilon_i + \varepsilon_j} = V_{ij}\) for \(k \in \{1,\ldots,r\}, 1 \leq i < j \leq r\) (Remark \ref{rem:herm-frame-roots}).

For every \(h \in \g^\tau\) with \(\g_1(h) \subset \g_1(H)\), we first examine the cones
\[C_\pm (h) := C_\pm(\tau,h) := \g^{-\tau}_{\pm 1}(h) \cap \pm\Wmin.\]
Since only the hyperbolic part of \(h\) contributes to \(C_\pm(h)\), it suffices to consider elements in \(h \in \fa \subset \g^\tau\). We may thus assume that \(h\) is of the form \(h = \sum_{k=1}^r \lambda_k H_k\) with \(\lambda_1,\ldots,\lambda_r \in \R\).

\begin{prop}
  \label{prop:cayley-class-reduction}
  Let \(h = \sum_{k=1}^r \lambda_k H_k \in \fa \subset \g^\tau\) with \(\g_1(h) \subset \g_1(H) = \g_1(\frac{1}{2}\sum_{k=1}^r H_k)\) and let \(h' = \sum_{k=1}^r \lambda_k' H_k\) with
  \[\lambda_k' = \begin{cases} \frac{1}{2} & \lambda_k = \frac{1}{2} \\ 0 & \lambda_k \neq \frac{1}{2}\end{cases} \quad \text{for } k=1,\ldots,r.\]
  Then \(C_\pm(h) = C_\pm(h').\)
\end{prop}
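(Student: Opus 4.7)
The plan is to reduce the two equalities $C_\pm(h) = C_\pm(h')$ to the single assertion $C_+(h) = C_+(h')$ via the Cartan involution $\theta$, and then to exploit the Peirce decomposition of the Jordan algebra $V = \g_1(H)$ together with Lemma \ref{lem:squares-peirce}. Since $h \in \fa \subset \fp$, one has $\theta(h) = -h$, so $\theta(\g_\lambda(h)) = \g_{-\lambda}(h)$, and $\theta(\Wmin) = \Wmin$ because $\theta$ fixes the $H$-element $H_0 \in \fz(\fk)$. The hypothesis $\g_1(h) \subset \g_1(H)$ forces, by replacing roots by their negatives, the analogous inclusion $\g_{-1}(h) \subset \g_{-1}(H)$. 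Applying $\theta$ to $C_+(h) = \g_1(h) \cap \Wmin$ therefore yields $\theta(C_+(h)) = \g_{-1}(h) \cap \Wmin = -C_-(h)$, and the same for $h'$; hence $C_-(h) = C_-(h')$ will follow from $C_+(h) = C_+(h')$.

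For the $C_+$ case, I use the identifications $V_k = \g^{2\varepsilon_k}$ and $V_{ij} = \g^{\varepsilon_i + \varepsilon_j}$ from Remark \ref{rem:herm-frame-roots}. Combined with $\varepsilon_k(H_\ell) = \delta_{k\ell}$, an element of $V_k$ lies in $\g_1(h)$ precisely when $\lambda_k = \tfrac{1}{2}$, while an element of $V_{ij}$ lies in $\g_1(h)$ precisely when $\lambda_i + \lambda_j = 1$. Setting $P := \{k : \lambda_k = \tfrac{1}{2}\}$, one reads off
\[
\g_1(h') = \bigoplus_{k \in P} V_k \oplus \bigoplus_{\substack{i < j \\ i, j \in P}} V_{ij},
\]
and since $i, j \in P$ implies $\lambda_i + \lambda_j = 1$, we have $\g_1(h') \subset \g_1(h)$ and hence the trivial inclusion $C_+(h') \subset C_+(h)$.

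The non-trivial inclusion $C_+(h) \subset C_+(h')$ is where Lemma \ref{lem:squares-peirce} enters decisively. Since $\g_1(h) \subset V$, Lemma \ref{lem:jordan-herm-cone} gives $C_+(h) \subset \Wmin \cap V = \oline{\Omega_V}$, so every $y \in C_+(h)$ is a square in $V$. Writing the Peirce decomposition $y = \sum_k \alpha_k X_k + \sum_{i<j} y_{ij}$, the condition $y \in \g_1(h)$ forces $\alpha_k \neq 0 \Rightarrow \lambda_k = \tfrac{1}{2}$, i.e.\ $k \in P$. For a cross-term, Lemma \ref{lem:squares-peirce} applied contrapositively to the square $y$ shows that $y_{ij} \neq 0$ forces both $\alpha_i \neq 0$ and $\alpha_j \neq 0$, whence $i, j \in P$. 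Thus every non-zero Peirce component of $y$ lies in $\g_1(h')$, so $y \in \g_1(h') \cap \oline{\Omega_V} = C_+(h')$.

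The main obstacle is precisely this last inclusion: $\g_1(h)$ can be strictly larger than $\g_1(h')$, because $\lambda_i + \lambda_j = 1$ may hold without either $\lambda_i$ or $\lambda_j$ being equal to $\tfrac{1}{2}$ (for instance $\lambda_i = 0$, $\lambda_j = 1$). The Jordan-algebraic constraint on squares, namely that off-diagonal Peirce components cannot exist without the corresponding diagonal terms, is exactly what rules out such spurious contributions and collapses $C_+(h)$ onto $C_+(h')$.
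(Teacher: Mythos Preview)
Your proof is correct and follows essentially the same approach as the paper: reduce to the $C_+$ case via the Cartan involution, use the Peirce/root space decomposition to obtain the easy inclusion $C_+(h') \subset C_+(h)$, and then invoke Lemma~\ref{lem:squares-peirce} on squares in $V$ to force every nonzero Peirce component of an element of $C_+(h)$ to lie in $\g_1(h')$. The only difference is cosmetic---you phrase the key step as the contrapositive of Lemma~\ref{lem:squares-peirce}, whereas the paper applies it directly.
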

\begin{proof}
  The assumption \(\g_1(h) \subset \g_1(H)\) implies that \(\g^{-\tau}_{\pm 1}(h),\g^{-\tau}_{\pm 1}(h') \subset \g_{\pm 1}(H)\), so that \(C_\pm(h) = \g_1(h) \cap C\) and \(C_\pm(h') = \g_1(h') \cap C\) by the arguments from the beginning of this section. From the root space decomposition of \(\g\) with respect to \(\fa\), we see that \(C_\pm(h') \subset C_\pm(h)\). It suffices to show that \(C_+(h') = C_+(h)\) because the Cartan involution with respect to the decomposition \(\g = \fk \oplus \fp\) interchanges \(C_+(h)\) and \(C_-(h)\), respectively \(C_+(h')\) and \(C_-(h')\).

  Let \(x \in C_+(h)\) and let \(x = \sum_{i=1}^r x_i + \sum_{1 \leq i < j \leq r} x_{ij}\) be the Peirce decomposition of \(x\) with respect to the Jordan frame \(\{X_1,\ldots,X_r\}\) we fixed before.
  Since \(x_i \in \g^{2\varepsilon_i}\) and \(x_{ij} \in \g^{\varepsilon_i + \varepsilon_j}\), we have \(x_i \neq 0\) only if \(2\lambda_i = 1\), i.e.\ \(\lambda_i = \frac{1}{2}\), and \(x_{ij} \neq 0\) only if \(\lambda_i + \lambda_j = 1\) for \(i,j=1,\ldots,r\).
  Thus, if \(\lambda_i \neq \frac{1}{2}\), then \(x_i = 0\), and thus \(x_{ij} = x_{ki} = 0\) for all \(1 \leq k < i < j \leq r\) by Lemma \ref{lem:squares-peirce} because \(x\) is a square. This implies \(x \in \g_1(h')\), so that \(x \in \g_1(h') \cap C = C_+(h')\) and therefore \(C_+(h) \subset C_+(h')\).
\end{proof}

Proposition \ref{prop:cayley-class-reduction} shows that, in order to classify the Jordan algebras \(\g_\pm(\tau,h):= C_\pm(h) - C_\pm(h)\) in the case \(\g_1(h) \subset \g_1(H)\), it suffices to consider elements of the form \(h = h_s := \frac{1}{2}\sum_{k=1}^s H_k\) with \(1 \leq s \leq r\). For this purpose, we recall the definition of the simple Jordan subalgebras \(V^{(s)} := V_1(X_1 + \ldots + X_s) \subset V\) and their homogeneous symmetric cones \(C^{(s)} := C \cap V^{(s)}\) from Section \ref{sec:jordan-algebras}.

\begin{lem}
  \label{lem:caley-class-easy-case}
  Let \(h = \frac{1}{2}\sum_{k=1}^s H_k\) for \(1 \leq s \leq r\). Then
  \[C_+(h) = C^{(s)} \quad \text{and} \quad \g_+(\tau,h) = V^{(s)} = V_1(X_1 + \ldots + X_s),\]
  which is a simple Jordan algebra.
\end{lem}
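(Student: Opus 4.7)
The plan is to compute everything directly from the root space decomposition and the identification $\Wmin \cap V = C$ established earlier.

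First I would determine $\g_1(h)$ explicitly. Since $h = \tfrac{1}{2}\sum_{k=1}^s H_k \in \fa$ and the restricted root system $\Sigma$ is of type $(C_r)$ as in \eqref{eq:tube-type}, every root $\alpha \in \Sigma$ satisfies $\alpha(h) \in \{0,\pm\tfrac{1}{2},\pm 1\}$. The eigenvalue $1$ occurs exactly for $\alpha = 2\varepsilon_i$ with $i \leq s$ and for $\alpha = \varepsilon_i + \varepsilon_j$ with $1 \leq i < j \leq s$. Using the identifications $\g^{2\varepsilon_k} = V_k$ and $\g^{\varepsilon_i+\varepsilon_j} = V_{ij}$ from \eqref{eq:herm-frame-roots}, this gives
\[\g_1(h) \;=\; \bigoplus_{i=1}^{s} V_i \;\oplus\; \bigoplus_{1 \leq i < j \leq s} V_{ij} \;=\; V^{(s)},\]
using the Peirce description \eqref{eq:jordan-simple-subalg-peirce}.

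Next, since $V^{(s)} \subset V = \g_1(H) \subset \fq$, we have $\fq_1(h) = \g_1(h) = V^{(s)}$. Intersecting with $\Wmin$ and invoking Lemma \ref{lem:jordan-herm-cone} (which says $\Wmin \cap V$ coincides, with the sign convention we fixed, with $C = \{x^2 : x \in V\}$) yields
\[C_+(h) = \Wmin \cap \fq_1(h) = \Wmin \cap V^{(s)} = C \cap V^{(s)} = C^{(s)},\]
where the last equality is the standard fact (recalled just before Proposition \ref{prop:jordan-simple-subalg}) that squares in $V^{(s)}$ are precisely the squares of $V$ that lie in $V^{(s)}$.

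Finally, since $C^{(s)}$ has non-empty interior in $V^{(s)}$, one has $\g_+(\tau,h) = C^{(s)} - C^{(s)} = V^{(s)}$. Simplicity of $V^{(s)}$ is exactly Proposition \ref{prop:jordan-simple-subalg}. There is no real obstacle here; the only point requiring a little care is the sign normalization for $\Wmin \cap V = C$, which however has already been arranged at the start of Section \ref{sec:cayley-type-inv}.
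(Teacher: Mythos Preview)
Your proof is correct and follows essentially the same approach as the paper: the paper's own argument is simply a terser version of what you wrote, invoking the root space decomposition together with the Peirce decomposition \eqref{eq:jordan-simple-subalg-peirce} to identify $\g_1(h) = V^{(s)}$, then $C_+(h) = \g_1(h) \cap C = C^{(s)}$, and finally Proposition~\ref{prop:jordan-simple-subalg} for simplicity. Your version spells out the root-by-root computation and the inclusion $\g_1(h) \subset \g_1(H) \subset \fq$, which the paper leaves implicit.
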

\begin{proof}
  The root space decomposition of \(\g\) with respect to \(\Sigma\) and the Peirce decomposition \eqref{eq:jordan-simple-subalg-peirce} imply that \(\g_1(h) = V^{(s)}\), hence \(C_+(h) = \g_1(h) \cap C = C^{(s)}\). The simplicity of \(\g_+(\tau,h) = C^{(s)} - C^{(s)}\) follows from Proposition \ref{prop:jordan-simple-subalg}.
\end{proof}

The simple euclidean Jordan algebras \(V^{(s)}\) \((1 \leq s \leq r)\) correspond via the Kantor--Koecher--Tits construction (cf.\ Example \ref{ex:KKT}) to hermitian simple Lie algebras.
By Lemma \ref{lem:caley-class-easy-case}, these hermitian simple Lie algebras are exactly the ideals \(\g(\tau,h) \subset \g_\mathrm{red}(\tau,h)\) generated by \(\g^{-\tau}_{-1}(h) \oplus \g^{-\tau}_1(h)\) for a hyperbolic element \(h \in \g^\tau\). We can determine these hermitian simple Lie algebras by reading off the dimension of the root spaces contained in \(V^{(s)}\) (cf.\ Remark \ref{rem:herm-frame-roots} and \eqref{eq:jordan-simple-subalg-peirce}) and comparing them to the classification of real simple Lie algebras in \cite[p.\ 532--534]{Hel78}.

\begin{thm}
  \label{thm:cayley-type-class-simplepart}
  Let \(h_0 \in \g\) be a hyperbolic element such that \(\g = \g_{-1}(h_0) \oplus \g_0(h_0) \oplus \g_1(h_0)\). Let \(\tau := e^{i\pi \ad h_0}\).
  Then, for all \(h \in \g^\tau\) such that \(\g_1(h) \subset \g_1(h_0)\), the subalgebra \(\g(\tau,h)\) is either \(\{0\}\) or of the following form, where \(1 \leq k \leq r\):
\end{thm}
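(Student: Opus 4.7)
The plan is to reduce the general situation step by step to the explicit model provided by the Peirce decomposition of the Jordan algebra $V = \g_1(h_0)$, and then identify the resulting tube-type simple Lie algebras via the Kantor--Koecher--Tits correspondence. First, since $h \in \g^\tau$ is hyperbolic in $\g^\tau$ (the hyperbolic part is all that contributes to $C_\pm(\tau,h)$) and $\tau$ commutes with the Cartan involution $\theta$ fixing $\g_0(h_0)$ and flipping $\g_{\pm 1}(h_0)$, Lemmas \ref{lem:std-wedge-hypelements} and \ref{lem:invclass-herm-conj-isom} let me conjugate $h$ into the maximal abelian subspace $\fa \subset \fp$ without changing the isomorphism type of $\g(\tau,h)$. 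So I may write $h = \sum_{k=1}^r \lambda_k H_k$ with $\lambda_k \in \R$.

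Next, the assumption $\g_1(h) \subset \g_1(h_0)$ forces every $\lambda_k \ge 0$ (up to acting by the Weyl group $\{\pm 1\}^r \rtimes S_r$ from Remark \ref{rem:herm-weylgrp}, which preserves the problem) and makes Proposition \ref{prop:cayley-class-reduction} directly applicable: replacing $\lambda_k$ by $\frac{1}{2}$ when $\lambda_k = \frac{1}{2}$ and by $0$ otherwise leaves $C_\pm(\tau,h)$ unchanged. After further permuting coordinates by the Weyl group I may therefore assume $h = h_s := \frac{1}{2}\sum_{k=1}^s H_k$ for some $0 \le s \le r$. The case $s=0$ gives $\g(\tau,h) = \{0\}$; otherwise Lemma \ref{lem:caley-class-easy-case} identifies $\g_+(\tau,h_s) = V^{(s)} = V_1(X_1 + \dots + X_s)$ as a real simple euclidean Jordan algebra, and symmetrically $\g_-(\tau,h_s) = \theta(V^{(s)})$.

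The remaining task is to name the tube-type hermitian simple Lie algebra $\g(\tau,h_s) = \g_-(\tau,h_s) \oplus [\g_-(\tau,h_s),\g_+(\tau,h_s)] \oplus \g_+(\tau,h_s)$. By Remark \ref{rem:tkk-onetoone}, the Kantor--Koecher--Tits construction establishes a bijection between isomorphism classes of real simple euclidean Jordan algebras and isomorphism classes of hermitian simple Lie algebras of tube type, with the $\pm 1$-parts of the associated $3$-grading being copies of the Jordan algebra. So I just need to identify the isomorphism class of $V^{(s)}$ inside each family and read off the corresponding hermitian Lie algebra from the list in \cite[p.~213]{FK94}. For this I use Remark \ref{rem:herm-frame-roots}: under the chosen Jordan frame $\{X_1,\dots,X_r\}$ we have $V_i = \g^{2\varepsilon_i}$ and $V_{ij} = \g^{\varepsilon_i+\varepsilon_j}$, so $\dim V_{ij}$ equals the common multiplicity $d$ of the long restricted roots $\varepsilon_i + \varepsilon_j$ of $\g$, while $\dim V_i = 1$. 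By \eqref{eq:jordan-simple-subalg-peirce}, $V^{(s)}$ has rank $s$ and the same $d$, which, by the classification of euclidean simple Jordan algebras, uniquely determines it (e.g. $d=1 \Rightarrow \Sym(s,\R)$, $d=2 \Rightarrow \Herm(s,\C)$, $d=4 \Rightarrow \Herm(s,\H)$, $d=8 \Rightarrow \Herm(3,\bbO)$ with $s \le 3$, and the Minkowski family $\cM^{1+d}$ at $s=2$). Feeding these back through Kantor--Koecher--Tits produces the entries $\sp(2k,\R)$, $\su(k,k)$, $\so^*(4k)$, $\so(2,k)$, $\fe_{7(-25)}$ (and degenerate $\fsl(2,\R)$ for $s=1$) indexed by $1 \le k \le r$, which are exactly the algebras listed in the table that completes the statement.

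The main obstacle is bookkeeping rather than conceptual: one must be careful that the reduction in Proposition \ref{prop:cayley-class-reduction} genuinely preserves the bracket $[\g_-,\g_+]$, not just the $\pm 1$-parts. This is automatic once $\g_\pm(\tau,h)$ are identified with $\pm V^{(s)}$, because the middle summand is then determined as a Lie algebra by the Jordan structure on $V^{(s)}$ alone (it is isomorphic to the structure algebra $\str(V^{(s)})$ via Remark \ref{rem:tkk-onetoone}(b)); so the whole $\g(\tau,h_s)$ is forced to be the Kantor--Koecher--Tits algebra of $V^{(s)}$, independent of how it sits inside $\g$. The case analysis to produce the table is then a direct lookup in the classification of simple euclidean Jordan algebras.
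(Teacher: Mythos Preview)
Your proposal is correct and follows essentially the same route as the paper's proof: reduce $h$ to $\fa$ by conjugation, apply Proposition~\ref{prop:cayley-class-reduction} and Lemma~\ref{lem:caley-class-easy-case} to get $\g_\pm(\tau,h) = V^{(s)}$ and its $\theta$-image, and then identify $\g(\tau,h)$ as the Kantor--Koecher--Tits algebra of $V^{(s)}$ via the classification of simple euclidean Jordan algebras. One small remark: the claim that ``$\g_1(h)\subset\g_1(h_0)$ forces every $\lambda_k\ge 0$ up to the Weyl group'' is neither needed nor quite right (the full Weyl group does not preserve $h_0$, only the $S_r$-part does), but Proposition~\ref{prop:cayley-class-reduction} applies directly without this assumption, so the argument goes through unchanged.
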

\begin{table}[H]
  \centering
  \begin{tabular}{|c|c|c|c|}
    \hline
    \(\g\) & \(r\) & \(\g_+(\tau,h)\) & \(\g(\tau,h)\) \\
    \hline
    \(\su(n,n)\) & \(n\) & \(\Herm(k,\C)\) & \(\su(k,k)\)\\
    \(\sp(2n,\R)\) & \(n\) &  \(\Sym(k,\R)\) & \(\sp(2k,\R)\) \\
    \(\so^*(4n)\) & \(n\) & \(\Herm(k,\H)\) &  \(\so^*(4k) (k \neq 1), \fsl(2,\R)\)\\
    \(\fe_{7(-25)}\) & \(3\) & \(\Herm(3,\mathbb{O}), \R \times \R^{10-1},\R\) & \(\fe_{7(-25)}, \so(2,10), \fsl(2,\R)\) \\
    \(\so(2,n)\) & \(2\) & \(\cM^n\) & \(\so(2,n), \so(2,1)\) \\
    \hline
  \end{tabular}
  \caption{Lie subalgebras \(\g(\tau,h)\) for a given involution \(\tau \in \Aut(\g)\) and \(h \in \g^\tau\) with \(\tau = e^{i\pi\ad(h_0)}\) and \(\g_1(h) \subset \g_1(h_0)\).}
  \label{table:cayley-type-class-simplepart}
\end{table}
\begin{proof}
  Fix a Cartan involution \(\theta\) and a maximal abelian subspace \(\fa \subset \fp\) of dimension \(r\) such that \(h,h_0 \in \fa\).
  We endow \(V := \g_1(h_0)\) with the structure of a euclidean simple Jordan algebra as in Example \ref{ex:KKT}.
  The restricted root system \(\Sigma \subset \fa^*\) of \(\g\) is of type \((C_r)\) as in \eqref{eq:tube-type}.
  Let \(H_1,\ldots,H_r \in \fa\) be the coroots of \(2\varepsilon_1,\ldots,2\varepsilon_r\) respectively.
  Since \(h_0\) induces a 3-grading on \(\g\), we may assume that \(h_0 = \frac{1}{2} \sum_{k=1}^r H_k\) (cf.\ Remark \ref{rem:tube-type-3grad-sign}).
  Furthermore, Proposition \ref{prop:cayley-class-reduction} shows that it suffices to consider the case where \(h\) is of the form \(h = \frac{1}{2}\sum_{k=1}^s H_k\) for some \(1 \leq s \leq r\). According to Lemma \ref{lem:caley-class-easy-case}, we then have \(\g_+(\tau,h) = \spann(\Wmin \cap \g^{-\tau}_1(h)) = V^{(s)}\) for a suitable Jordan frame in \(V\).

  By applying the KKT-construction to the euclidean simple Jordan algebra \(V^{(s)}\) (cf.\ Proposition \ref{prop:jordan-simple-subalg}), we obtain the subalgebra of \(\g\) generated by the subspaces \(\g_+(\tau,h)\) and \(\theta(\g_+(\tau,h)) = \g_-(\tau,h)\), which is \(\g(\tau,h)\).
  Using the classification of hermitian simple Lie algebras and the classification of euclidean Jordan algebras (cf.\ \cite[p.\ 97]{FK94}), we obtain Table \ref{table:cayley-type-class-simplepart}.
  The case \(\g = \fe_{7(-25)}\), i.e.\ \(V = \Herm(3,\bbO)\), was considered in Example \ref{ex:jordan-simple-subalg-hermoctonions}, and the remaining cases follow analogously.
\end{proof}

\subsection{Reductions of the general case}
\label{sec:reductions}

In Proposition \ref{prop:cayley-class-reduction}, we have shown that the classification of the subalgebras \(\g(\tau,h)\) for Cayley type involutions \(\tau = e^{i\pi\ad H}\) with \(\spec(\ad H) = \{0, \pm 1\}\) and \(\g_1(h) \subset \g_1(H)\) can be reduced to the case where \(h \in \g^\tau\) is semisimple and induces a 5-grading on \(\g\). Our first goal in this section is to prove a more general version of this lemma which also holds for non-Cayley type involutions.

Recall that \(\g\) is a hermitian simple Lie algebra.

\begin{lem}
  \label{lem:tube-type-emb-lower-rank}
  Let \(h \in \g\) be a semisimple element with \(\spec(\ad h) \subset \{0, \pm \frac{1}{2}, \pm 1\}\). If \(\g_{\pm 1}(h) \neq \{0\}\), then
  \begin{equation}
    \label{eq:tube-type-emb-lower-rank}
    \g_t(h) := \g_{-1}(h) \oplus [\g_{-1}(h), \g_1(h)] \oplus \g_1(h)
  \end{equation}
  is a hermitian simple Lie algebra of tube type.
\end{lem}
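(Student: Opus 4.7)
The plan is to reduce to a standard hyperbolic element inside a tube-type subalgebra of $\g$ and then invoke Theorem \ref{thm:cayley-type-class-simplepart}. Since $\ad h$ has real spectrum, $h$ is hyperbolic, and conjugating by $\Inn(\g)$ only replaces $\g_t(h)$ by its image under an automorphism. I can therefore move $h$ into a maximal abelian subspace $\fa \subset \fp$ of a Cartan decomposition $\g = \fk \oplus \fp$ by Remark \ref{rem:hyperbolic-elements}, and write $h = \sum_{k=1}^r \lambda_k H_k$ in the coroot basis. The hypothesis $\spec(\ad h) \subset \{0, \pm \tfrac{1}{2}, \pm 1\}$ combined with the restricted root structure of type $(C_r)$ or $(BC_r)$ forces $\lambda_k \in \{0, \pm \tfrac{1}{2}\}$; the Weyl group of signed permutations (Remark \ref{rem:herm-weylgrp}) then lets me assume all non-zero $\lambda_k$ equal $\tfrac{1}{2}$, and after renumbering I arrive at $h = \tfrac{1}{2} \sum_{k=1}^s H_k$ for some $1 \leq s \leq r$, with $s \geq 1$ guaranteed by $\g_{\pm 1}(h) \neq \{0\}$.

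Next, I will show $\g_t(h) \subset \g_t$, the tube-type subalgebra of $\g$ from Remark \ref{rem:tube-type-h-element}(c) (which coincides with $\g$ in the tube-type case). Only the roots $\pm 2\varepsilon_k$ with $k \leq s$ and $\pm(\varepsilon_i + \varepsilon_j)$ with $i < j \leq s$ evaluate to $\pm 1$ on $h$, and all of these are long roots contained in $\g_{\pm 2}(\sum_{k=1}^r H_k) \subset \g_t$. In particular, the short roots $\pm \varepsilon_k$ of a $(BC_r)$-system never contribute, which is the key input when $\g$ is of non-tube type. Since $\g_t$ is a Lie subalgebra it also contains $[\g_{-1}(h), \g_1(h)]$, and therefore all of $\g_t(h)$.

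After replacing $\g$ by $\g_t$, I may assume $\g$ is of tube type. Setting $h_0 := \tfrac{1}{2} \sum_{k=1}^r H_k$ and $\tau := e^{i\pi \ad h_0}$ produces a Cayley-type involution with $\tau(\Wmin) = -\Wmin$ by Proposition \ref{prop:inthyp-tube-class}; since $[h, h_0] = 0$, we have $h \in \g^\tau$, and the root computation just performed gives $\g_1(h) \subset \g_1(h_0)$. To apply Theorem \ref{thm:cayley-type-class-simplepart} it remains to identify $\g_t(h)$ with the subalgebra $\g(\tau, h)$ appearing there: both are the subspace $\g_{-1}(h) \oplus [\g_{-1}(h), \g_1(h)] \oplus \g_1(h)$, so I need only check that $C_\pm(\tau, h) = \pm \Wmin \cap \g^{-\tau}_{\pm 1}(h)$ spans $\g_{\pm 1}(h)$. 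Every contributing root $\alpha$ satisfies $\alpha(h_0) = 1$, hence $\g_{\pm 1}(h) \subset \g^{-\tau}$ and $C_\pm(\tau, h) = \pm \Wmin \cap \g_{\pm 1}(h)$; by Lemma \ref{lem:jordan-herm-cone} this cone contains the cone of squares of the Jordan subalgebra $V^{(s)} = \g_1(h) \subset V = \g_1(h_0)$, and the cone of squares already spans $V^{(s)}$. Theorem \ref{thm:cayley-type-class-simplepart} then delivers the conclusion. I expect the main obstacle to be precisely this final identification $\g_t(h) = \g(\tau, h)$, since it requires feeding the Jordan-algebraic description of $\Wmin$ on the 3-graded pieces into a definition phrased purely in terms of the minimal cone; the preceding reductions amount to routine bookkeeping with root spaces and the Weyl-group action.
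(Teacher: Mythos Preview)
Your proof is correct and follows the same overall strategy as the paper: reduce $h$ to the standard form $h = \tfrac{1}{2}\sum_{k=1}^s H_k$ via the Weyl group, observe that the $\pm 1$-eigenspaces only involve long roots and hence lie in the tube-type subalgebra $\g_t$ of Remark~\ref{rem:tube-type-h-element}(c), and then identify $\g_t(h)$ with a hermitian simple tube-type algebra coming from the Jordan subalgebra $V^{(s)}$.

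The only difference is in the final step. The paper avoids introducing the Cayley-type involution $\tau$ and the cones $C_\pm(\tau,h)$ altogether: once one is in the tube-type algebra with $H = \tfrac{1}{2}\sum_{k=1}^r H_k$, Lemma~\ref{lem:caley-class-easy-case} directly says that $\g_1(h) = V^{(s)}$ is a \emph{simple} euclidean Jordan algebra, and then the Kantor--Koecher--Tits construction (Example~\ref{ex:KKT}, Remark~\ref{rem:tkk-onetoone}) immediately yields that the Lie algebra generated by $\g_{\pm 1}(h)$ is hermitian simple of tube type. Your route through $\g(\tau,h)$ and Theorem~\ref{thm:cayley-type-class-simplepart} is a detour that re-derives exactly this conclusion, and the ``main obstacle'' you anticipate---checking that $C_\pm(\tau,h)$ spans $\g_{\pm 1}(h)$---is precisely the content of Lemma~\ref{lem:caley-class-easy-case}, which you could cite directly. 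Both arguments are valid; the paper's is shorter because it bypasses the involution entirely.
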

\begin{proof}
  Fix a Cartan decomposition \(\g = \fk \oplus \fp\) of \(\g\) and a maximal abelian subspace \(\fa \subset \fp\) of dimension \(r = \rk_\R \g\).
  We may assume that \(h \in \fa\) because \(h\) is hyperbolic.
  The restricted root system of \(\g\) is either of type \((C_r)\) or of type \((BC_r)\) (cf.\ \eqref{eq:tube-type} and \eqref{eq:non-tube-type}).
  Let \(H_k\) be the coroot of \(2\varepsilon_k\) for \(k=1,\ldots,r\).
  Using the action of the Weyl group of \(\g\) on \(h\), we may assume that \(h = \sum_{k=1}^r \lambda_k H_k\) for \(\lambda_1 \geq \lambda_2 \geq \ldots \lambda_r \geq 0\).
  Then \(\lambda_k \in \{0,\frac{1}{2}\}\) for all \(1 \leq k \leq r\).
  Hence, Remark \ref{rem:tube-type-h-element}(c) implies that \(\g_t(h)\) is contained in an \(\ad(h)\)-invariant hermitian simple tube type Lie algebra \(\g_t\) with \(\rk_\R \g_t = r\).
  Thus, we may assume that \(\g\) is of tube type.
  We recall from Section \ref{sec:jordan-algebras} that, for the element \(H := \frac{1}{2}\sum_{k=1}^r H_k\), the eigenspace \(\g_1(H)\) can be endowed with the structure of a simple euclidean Jordan algebra.
  Now Lemma \ref{lem:caley-class-easy-case} implies that \(\g_1(h)\) is a simple euclidean Jordan algebra as well, so that the Lie algebra generated by \(\g_{\pm 1}(h)\) is hermitian simple and of tube type (cf.\ Example \ref{ex:KKT}).
\end{proof}

\begin{thm}
  \label{thm:invclass-5-grad-reduction}
  For a hyperbolic element \(h \in \g\), define
  \[\fs(h) := \fs_-(h) \oplus [\fs_-(h),\fs_+(h)] \oplus \fs_+(h) \quad \text{with} \quad \fs_\pm(h) := W_{\pm 1}(h) - W_{\pm 1}(h)\]
  and
  \[W_{\pm 1}(h) := (\pm \Wmin) \cap \g_{\pm 1}(h).\]
  If \(\fs(h) \neq \{0\}\), then it is a hermitian simple Lie algebra of tube type, and there exists \(h_0 \in \fs(h)\) such that
  \begin{itemize}
    \item \(h - h_0 \in \fz_\g(\fs(h))\),
    \item \(\fs(h) = \fs(h_0) = \g_{-1}(h_0) \oplus [\g_{-1}(h_0), \g_1(h_0)] \oplus \g_1(h_0)\), and
    \item \(h_0\) is hyperbolic with \(\spec(\ad h_0) \subset \{0, \pm \frac{1}{2},\pm 1\}\).
  \end{itemize}
\end{thm}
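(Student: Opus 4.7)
The plan is to conjugate $h$ into a maximal abelian subspace of $\fp$, to show that $W_{\pm 1}(h)$ lies inside a tube-type Jordan subalgebra, and to use the Peirce squares-lemma to identify $\fs(h)$ as the Kantor--Koecher--Tits Lie algebra of a simple euclidean Jordan subalgebra. The element $h_0$ is then built as a half-sum of coroots indexed by $S := \{k : \lambda_k = \tfrac{1}{2}\}$, where $h = \sum_k \lambda_k H_k$ in a maximal abelian $\fa \subset \fp$.

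By Remark \ref{rem:hyperbolic-elements} and the $\Inn(\g)$-invariance of $\Wmin$, I may assume $h \in \fa$ for a Cartan decomposition $\g = \fk \oplus \fp$, and write $h = \sum_k \lambda_k H_k$ with $H_k$ the coroot of $2\varepsilon_k$. Using Remark \ref{rem:herm-weylgrp}, I arrange $\lambda_1 \geq \ldots \geq \lambda_r \geq 0$. By Remark \ref{rem:tube-type-h-element}(b,c), $\g$ contains a hermitian tube-type subalgebra $\g_t$ of the same real rank with $\fa \subset \g_t$, and for $\tilde H := \tfrac{1}{2}\sum_k H_k$ the space $V := \g_1(\tilde H) \cap \g_t$ is a simple euclidean Jordan algebra (Example \ref{ex:KKT}) with Jordan frame $\{X_k\}$, $X_k \in \g^{2\varepsilon_k}$ (Example \ref{ex:herm-jordan-frame}). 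By Lemma \ref{lem:jordan-herm-cone} together with Remark \ref{rem:tube-type-h-element}(d), $\pm\Wmin \cap V = \pm\oline{\Omega_V}$ after fixing the sign of $\Wmin$.

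The key claim is $W_1(h) \subset V$. For $x = \sum_{\alpha(h)=1} x_\alpha \in \Wmin \cap \g_1(h)$ and generic $H \in \fa$, the limits $\lim_{t\to\infty} e^{-tM}e^{t\ad H}x$ (with $M := \max_\alpha \alpha(H)$ over roots occurring in $x$) extract each root-space component $x_\alpha$ and keep it in $\Wmin$ by closedness and $\R_{>0}$-invariance; iterating yields $x_\alpha \in \Wmin \cap \g^\alpha$ for every such $\alpha$. For $\alpha$ outside the Jordan-type roots $\Sigma_V := \{2\varepsilon_k,\,\varepsilon_i + \varepsilon_j\}$ --- that is, $\alpha = \varepsilon_i - \varepsilon_j$, or additionally $\alpha = \varepsilon_k$ in type $BC_r$ --- the intersection $\Wmin \cap \g^\alpha$ vanishes, since these roots correspond to $\fsl_2$-subalgebras of compact type relative to the hermitian structure of $\g$. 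Thus $x \in V$. Since $\pm x$ is a square in $V$ and its Peirce components $x_i X_i$ vanish whenever $\lambda_i \neq \tfrac{1}{2}$, Lemma \ref{lem:squares-peirce} forces $x_{ij} = 0$ unless $i, j \in S$, so $W_1(h) \subset V^{(|S|)} := V_1(\sum_{k \in S} X_k)$. The reverse inclusion is immediate from $\alpha(h) = 1$ for every Jordan-type $\alpha$ supported in $S$, giving $W_1(h) = \oline{\Omega_{V^{(|S|)}}}$; symmetrically, $W_{-1}(h) = -\theta\oline{\Omega_{V^{(|S|)}}}$.

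Now set $h_0 := \tfrac{1}{2}\sum_{k \in S} H_k$. A direct root computation yields $\spec(\ad h_0) \subset \{0, \pm\tfrac{1}{2}, \pm 1\}$ and $\g_{\pm 1}(h_0) = \pm V^{(|S|)}$ (with $\theta V^{(|S|)}$ the $-1$-eigenspace), so $W_{\pm 1}(h_0) = W_{\pm 1}(h)$, yielding $\fs(h) = \fs(h_0) = \g_{-1}(h_0) \oplus [\g_{-1}(h_0),\g_1(h_0)] \oplus \g_1(h_0)$. This is the Kantor--Koecher--Tits Lie algebra of the simple euclidean Jordan algebra $V^{(|S|)}$ (Proposition \ref{prop:jordan-simple-subalg}), and is hermitian simple of tube type by Lemma \ref{lem:tube-type-emb-lower-rank}. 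Moreover $h_0 \in [\fs_+(h), \fs_-(h)] \subset \fs(h)$ since $H_k = [X_k, Y_k]$ with $Y_k := -\theta X_k$ given by the $\fsl_2$-embedding of Remark \ref{rem:tube-type-h-element}(b), and $h - h_0 = \sum_{k \notin S} \lambda_k H_k$ annihilates every root in $\Sigma_V$ supported in $S$, so $h - h_0 \in \fz_\g(\fs(h))$. The main obstacle is the claim $\Wmin \cap \g^\alpha = \{0\}$ for non-Jordan-type restricted roots $\alpha$; this reflects the general principle that $\Wmin$ meets only the root spaces of non-compact positive (restricted) roots of the hermitian Lie algebra $\g$, and should be extracted from the general theory of invariant cones in \cite[Ch.\ VII--VIII]{Ne00}.
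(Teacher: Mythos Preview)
Your route is genuinely different from the paper's. The paper never tries to locate $W_1(h)$ inside a fixed Jordan algebra $V$. Instead it argues structurally: $\fs(h)$ is $\theta$-invariant and semisimple, the cone $\Wmin\cap\fs(h)$ is pointed generating invariant, so $\fs(h)$ is a direct sum of hermitian tube-type ideals; writing $h=h_z+h_0$ with $h_0\in\fs(h)\cap\fp$, one then uses the $(H_1)$-homomorphism machinery of \cite{HNO94} to force $\spec(\ad h_0)\subset\{0,\pm\tfrac12,\pm1\}$ on all of $\g$, and only at the end passes to coordinates in $\fa$ to check $\fs(h)=\fs(h_0)$. Your explicit construction $h_0=\tfrac12\sum_{k\in S}H_k$ ends up agreeing with the paper's $h_0$, but the justification is different.

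There are two gaps. First, the extraction of \emph{individual} root components $x_\alpha\in\Wmin$ by ``iterating'' does not work: once you peel off the top component you are no longer in $\Wmin$, and a middle root such as $\varepsilon_i+\varepsilon_j=\tfrac12(2\varepsilon_i)+\tfrac12(2\varepsilon_j)$ need not be extremal in $\{\alpha:\alpha(h)=1\}$, so it cannot be isolated by any choice of $H$. What \emph{does} work is to extract the $\tilde H$-graded pieces: $e^{-t}e^{t\ad\tilde H}x\to x_V$ as $t\to\infty$ and $e^{t\ad\tilde H}x\to x_0$ as $t\to-\infty$, giving $x_V\in\Wmin$ and $x_0\in\Wmin\cap\g_0(\tilde H)$. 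The latter vanishes because the Cayley involution $\tau_{\tilde H}=e^{i\pi\ad\tilde H}$ fixes $\g_0(\tilde H)$ pointwise while $\tau_{\tilde H}(\Wmin)=-\Wmin$, so $\Wmin\cap\g_0(\tilde H)\subset\Wmin\cap(-\Wmin)=\{0\}$. This is the precise replacement for your ``$\fsl_2$-subalgebras of compact type'' claim, and it finishes the tube-type case.

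Second, in the $BC_r$ case you must also kill the $\g_{1/2}(\tilde H)$-component (which appears when some $\lambda_k=1$). Here no Cayley involution is available (Lemma~\ref{lem:inthyp-nontubetype}), and the assertion $\Wmin\cap\g^{\varepsilon_k}=\{0\}$ is not obvious: a nilpotent element of convex type is only \emph{conjugate} to something in $\oline{\Omega_V}$, not necessarily contained in it. You flag this as ``the main obstacle'' and defer to \cite{Ne00}, but no specific result is cited, and this is exactly the step that the paper's structural argument is designed to bypass. The paper's $(H_1)$-homomorphism approach handles tube and non-tube type uniformly without ever needing to know where $\Wmin$ sits relative to the restricted root spaces.
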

\begin{proof}
  {\bf Step 1:} We fix a Cartan involution \(\theta\) on \(\g\) such that \(\theta(h) = -h\).
  Then \(\fs(h)\) is \(\theta\)-invariant because \(\theta(\Wmin) = \Wmin\) and \(\theta \g_{\pm 1}(h) = \g_{\mp 1}(h)\).
  Let \(\hat \fs := \fs(h) + \R h\).
  Then \([h,\fs(h)] \subset \fs(h)\) implies that \(\hat \fs\) is a \(\theta\)-invariant subalgebra.
  Hence both subalgebras \(\fs(h)\) and \(\hat \fs\) are reductive. Moreover, \(\fs(h)\) is semisimple because \(\fs_\pm(h) \subset [h,\fs(h)] \subset [\hat \fs, \hat \fs] = [\fs(h),\fs(h)]\).
  In particular, we can decompose \(h\) into \(h = h_z + h_0\) with \(h_0 \in \fs(h) \cap \fp\) and \(h_z \in \fz(\hat \fs(h)) \cap \fp\).
  If \(h_0 = 0\), then \(\fs(h) = \{0\}\), which proves the claim. Hence, we assume from now on that \(h_0 \neq 0\), so that \(\ad h_0\) induces the same 3-grading on \(\fs(h)\) as \(h\).

  {\bf Step 2:} We show that \(h_0\) induces a 5-grading on \(\g\).
  The closed convex cone \(W_\fs := \Wmin \cap \fs(h)\) is obviously pointed and invariant.
  It is also generating because \(W_\fs - W_\fs \subset \fs(h)\) is an ideal in \(\fs(h)\) containing \(\fs_\pm(h)\). 
  In particular, \(\fs(h)\) is admissible, so that it can be written as a direct sum \(\fs_0 \oplus \bigoplus_{j=1}^m \fs_j\) with \(\fs_0\) compact and \(\fs_j\) hermitian simple for \(1 \leq j \leq m\) (cf.\ Lemma \ref{lem:adm-red-liealg-ideals}).
  Since \(\fs(h)\) is generated by the subspaces \(\fs_\pm(h)\), which consist of nilpotent elements, we have \(\fs_0 = \{0\}\).
  The element \(h_0\) decomposes into a sum \(h_0 = \sum_{j=1}^m h_j\) with \(h_j \in \fs_j \cap \fp\), and each \(h_j\) induces a 3-grading on \(\fs_j\).
  In particular, the ideals \(\fs_j\) are of tube type by Lemma \ref{lem:tube-type-3grad}.

  Moreover, the Cartan involution \(\theta\) leaves each \(\fs_j\) invariant, because otherwise there would be a pair of simple ideals \(\fs_j\) and \(\fs_k\) for \(1 \leq j < k \leq m\) such that \(\theta(\fs_j) = \fs_k\), so that \((\fs_j \oplus \fs_k)^\theta \cong \fs_j\) would be non-compact, which is a contradiction. Thus, \(\theta\) restricts to a Cartan involution on each simple ideal of \(\fs(h)\).

  Let \(U_j \in \Wmin \cap \fs_j\) be an \(H\)-element in \(\fs_j \cap \fk\) and let \(\kappa_j : (\fsl(2,\R), \frac{1}{2}U) \rightarrow (\fs_j, U_j)\) be an \((H_2)\)-homomorphism with \(\kappa_j(\frac{1}{2}H) = h_j\) (cf.\ Remark \ref{rem:tube-type-h-element}).
  Since \(U_\fs := \sum_{j=1}^m U_j\) is an \(H\)-element of \(\fs(h)\) by \cite[Rem.\ II.2]{HNO94}, the inclusions \(\iota_j : (\fs_j, U_j) \rightarrow (\fs(h), \sum_{k=1}^m U_k)\) are \((H_1)\)-homomorphisms, so that we obtain \((H_1)\)-homomorphisms \(\iota_j \circ \kappa_j : (\fsl(2,\R), \tfrac{1}{2}U) \rightarrow (\fs(h), U_\fs)\).
  The images of these homomorphisms commute, so that the commutative sum
  \[\kappa_\fs : (\fsl(2,\R), \tfrac{1}{2}U) \rightarrow (\fs(h), U_\fs), \quad x \mapsto \sum_{j=1}^m (\iota_j \circ \kappa_j) (x),\]
  is a \((H_1)\)-homomorphism that satisfies \(\kappa_\fs(\frac{1}{2}H) = \sum_{j=1}^m h_j = h_0\).
  Finally, \cite[Prop.\ II.9]{HNO94} implies that there exists an \(H\)-element \(U_\g \in \g\) such that the inclusion \(\iota_\fs: (\fs(h), U_\fs) \rightarrow (\g, U_\g)\) is an \((H_1)\)-homomorphism.
  By applying the arguments from \cite[p.\ 202]{HNO94} to the \((H_1)\)-homomorphism \(\kappa := \iota_\fs \circ \kappa_\fs\), which satisfies \(\kappa(\frac{1}{2}H) = h_0\), we see that \(\ad h_0\) induces a 5-grading
  \[\g = \g_{-1}(h_0) \oplus \g_{-\frac{1}{2}}(h_0) \oplus \g_0(h_0) \oplus \g_{\frac{1}{2}}(h_0) \oplus \g_1(h_0).\]
  This also shows that \(\ad h_0\) is semisimple with \(\spec(\ad h_0) \subset \{0, \pm \frac{1}{2}, \pm 1\}\).

  {\bf Step 3:} Since \(\g\) is hermitian simple and \(h_0\) is a semisimple element inducing a 5-grading on \(\g\), Lemma \ref{lem:tube-type-emb-lower-rank} shows that \(\g_t(h_0)\) is a hermitian simple Lie algebra of tube type.
  In particular, \(\g_1(h_0)\) can be endowed with the structure of a simple euclidean Jordan algebra and the convex cone \(\g_1(h_0) \cap \Wmin\) is generating in \(\g_1(h_0)\) by Lemma \ref{lem:jordan-herm-cone}.
  Thus, we have \(\fs(h_0) = \g_t(h_0)\). In particular, \(\fs(h_0)\) is hermitian simple.

  {\bf Step 4:} We note that \(\fs_\pm(h) \subset \g_{\pm 1}(h_0)\) because \([h_z, \fs_{\pm}(h)] = \{0\}\). It remains to show \(\g_{\pm 1}(h_0) \subset \fs_\pm(h)\), so that \(\fs(h) = \fs(h_0)\).
  To this end, we choose a maximal abelian subspace \(\fa_\fs \subset \fs(h) \cap \fp\) containing \(h_0\).
  Then there exists a maximal abelian subspace \(\fa \subset \fp\) of dimension \(r := \rk_\R \g\) containing \(\fa_\fs\) and \(h_z\) because \([h_z, \fa_\fs] \subset [h_z,\fs(h)] = \{0\}\).
  
  Recall from Remark \ref{rem:tube-type-h-element}(b) that we can extend \(\fa\) to a subalgebra \(\fh \cong \fsl(2,\R)^r\) which is the image of an \((H_1)\)-inclusion \(\fsl(2,\R)^r \rightarrow \g\).
  We identify \(\fh\) with \(\fsl(2,\R)^r\) and define \(\{\varepsilon_1,\ldots,\varepsilon_r\}\) as the dual basis of \(\{H_1,\ldots,H_r\} \subset \fa\), so that the restricted root system of \(\g\) is given by \eqref{eq:tube-type} or \eqref{eq:non-tube-type}.
  The image of \(\kappa_\fs\), respectively \(\kappa\), is contained in \(\fh\).
  Since \(h_0 = \kappa(\frac{1}{2}H) \in \fa\) induces a 5-grading on \(\g\), we may, after reordering or changing the sign of \(H_1,\ldots,H_r\) if necessary, assume that \(h_0 = \frac{1}{2}\sum_{k=1}^s H_k\) for some \(1 \leq s \leq r\) (cf.\ Remark \ref{rem:tube-type-3grad-sign}). We may also assume that \(h_z = \sum_{k=1}^r \mu_k H_k\) for some \(\mu_1,\ldots,\mu_r \in \R\) because \(h_z \in \fa\).
In particular,
\[\g_{\pm 1}(h_0) = \bigoplus_{1 \leq i < j \leq s} \g^{\pm (\varepsilon_i + \varepsilon_j)} \oplus \bigoplus_{1 \leq i \leq s} \g^{\pm 2 \varepsilon_i}.\]
Let \(x := \kappa(X), y := \kappa(Y) \in \fs(h) \cap \fh \subset \g\).
Then \(x = \sum_{k=1}^s X_k\) and \(y = \sum_{k=1}^s Y_k\) such that \((H_k, X_k, Y_k)\) is an \(\fsl(2)\)-triple with
\[[H_k,X_\ell] = 2\delta_{k \ell} X_\ell \quad \text{and} \quad [H_k, Y_\ell] = -2\delta_{k \ell} Y_\ell \quad \text{ for } 1 \leq k, \ell \leq s.\]
In particular, we have \(X_k \in \g^{2\varepsilon_k}\) and \(Y_k \in \g^{-2\varepsilon_k}\) for \(1 \leq k \leq s\).
Since \(x \in \fs(h)\), we have \([h_z, x] = 0\), so that
\[0 = [h_z, x] = \sum_{k=1}^s 2\mu_k X_k \]
implies that \(\mu_k = 0\) for \(1 \leq k \leq s\) and thus \([h_z, \g_1(h_0)] = \{0\}\).
Similarly, we see that \([h_z, \g_{-1}(h_0)] = \{0\}\) and thus \(\g_{\pm 1}(h_0) = \fs_\pm(h)\).
Since these subspaces generate \(\fs(h_0)\), respectively \(\fs(h)\), this proves that \(\fs(h) = \fs(h_0)\).
In particular, \(\fs(h)\) is a hermitian simple Lie algebra of tube type.
\end{proof}

\begin{cor}
  \label{cor:invclass-5grad-red}
  Let \(W \subset \g\) be a pointed generating invariant closed convex cone and let \(\tau \in \Aut(\g)\) be an involution with \(\tau(W) = -W\). Then, for every \(h \in \g^\tau\), there exists a hyperbolic element \(h_0 \in \g^\tau\) with \(\spec(\ad h_0) \subset \{0, \pm \frac{1}{2}, \pm 1\}\) and \(\g(\tau,h) = \g(\tau,h_0)\).
\end{cor}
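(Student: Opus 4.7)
The plan is to reduce the statement to Theorem~\ref{thm:invclass-5-grad-reduction}. First, Lemma~\ref{lem:assoc-wedge-simple} gives $\g(W,\tau,h) = \g(\tau, h)$, so one may take $W = \Wmin$. A short convexity argument exploiting $\tau(\Wmin) = -\Wmin$ then shows $\g_\pm(\tau, h) = \fs_\pm(h) \cap \g^{-\tau}$: if $x \in \fs_\pm(h) \cap \g^{-\tau}$ is written as $x = u - v$ with $u, v \in W_{\pm 1}(h)$, the condition $\tau(x) = -x$ forces $u + \tau(u) = v + \tau(v)$, so $2x = (u - \tau(u)) - (v - \tau(v))$; each $y - \tau(y)$ with $y \in W_{\pm 1}(h)$ lies in $W_{\pm 1}(h) \cap \g^{-\tau} = C_\pm(\tau, h)$ since $-\tau$ maps $W_{\pm 1}(h)$ into itself. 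Thus $\g(\tau, h)$ is determined by $\fs(h)$ together with $\tau|_{\fs(h)}$.

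Next, replace $h$ by its hyperbolic part. Writing the Jordan decomposition $h = h_h + h_e + h_n$ in $\g$, uniqueness combined with $\tau(h) = h$ places each summand in $\g^\tau$. On $\g_{\pm 1}(h)$, the commuting operators $\ad h_h$ (real spectrum), $\ad h_e$ (purely imaginary spectrum), and $\ad h_n$ (nilpotent) sum to $\pm \id$; matching the real parts of the eigenvalues of the semisimple part forces $\ad h_h = \pm \id$ and $\ad h_e = 0$, and then the difference $\ad h - \ad h_s$ being both nilpotent and equal to $\id$ minus a semisimple operator forces $\ad h_n = 0$ as well. Hence $\g_{\pm 1}(h) \subset \g_{\pm 1}(h_h)$ and $\g(\tau, h) \subset \g(\tau, h_h)$.

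Finally, apply Theorem~\ref{thm:invclass-5-grad-reduction} to the hyperbolic element $h_h \in \g^\tau$ to obtain $h_0 \in \fs(h_h)$ with $\spec(\ad h_0) \subset \{0, \pm \frac{1}{2}, \pm 1\}$, $h_h - h_0 \in \fz_\g(\fs(h_h))$, and $\fs(h_h) = \fs(h_0)$. Fixing a Cartan involution $\theta$ commuting with $\tau$ and using Lemma~\ref{lem:std-wedge-hypelements} to conjugate $h$ by $\Inn(\g^\tau)$ so that $h_h \in \fa \cap \g^\tau \subset \fp$, both summands of $h_h = h_z + h_0$ lie in $\fp$; uniqueness combined with $\tau\theta = \theta\tau$ then gives $h_0 \in \g^\tau$. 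Lemma~\ref{lem:jordan-herm-cone} applied to the tube-type algebra $\fs(h_0)$ yields $W_{\pm 1}(h_h) = \Wmin \cap \g_{\pm 1}(h_0) = W_{\pm 1}(h_0)$, whence $C_\pm(\tau, h_h) = C_\pm(\tau, h_0)$ and $\g(\tau, h_h) = \g(\tau, h_0)$. The main obstacle is the reverse inclusion $\g(\tau, h_h) \subset \g(\tau, h)$ needed to match the second step: the elliptic and nilpotent summands $h_e, h_n$ may act non-trivially on $\fs_\pm(h_h)$, so $\fs(h) \subsetneq \fs(h_h)$ is possible in general, and closing this gap requires either a $\tau$-equivariant refinement of Theorem~\ref{thm:invclass-5-grad-reduction} applied directly inside the $\tau$-invariant subalgebra $\fs(h) \subset \fs(h_h)$, or a direct argument that the $(-\tau)$-eigenspaces of $\fs_\pm(h_h)$ and $\fs_\pm(h)$ coincide.
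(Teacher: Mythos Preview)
Your approach is essentially the same as the paper's: reduce to $\Wmin$, pass to a hyperbolic element, invoke Theorem~\ref{thm:invclass-5-grad-reduction}, and check $h_0 \in \g^\tau$. Two remarks are in order.

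First, the paper's verification that $h_0 \in \g^\tau$ is shorter than yours and avoids the Cartan involution entirely. Since $\tau(W) = -W$ and $\ad h$ commutes with $\tau$, the subalgebra $\fs := \fs(h)$ is $\tau$-invariant; from $h - h_0 \in \fz_\g(\fs)$ one gets $\ad_\fs h_0 = \ad h|_\fs$, so $\tau$ commutes with $\ad_\fs h_0$, whence $\ad_\fs(\tau(h_0)) = \ad_\fs(h_0)$ and therefore $\tau(h_0) = h_0$ because $\fs$ is \emph{simple}. Your route via $\theta$ and uniqueness of the splitting $h_h = h_z + h_0$ also works, but the simplicity argument is more direct.

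Second, you are right to flag the step $\g(\tau,h) = \g(\tau,h_h)$ as a genuine obstacle. It fails in general: take $\g = \sp(4,\R)$ with the Cayley-type $\tau$, let $h_h$ induce the full $3$-grading, and let $h_e \in \g_0(h_h) \cap \g^\tau$ be the elliptic element acting on $V = \Sym(2,\R)$ by conjugation with a rotation. Then $\ker(\ad h_e|_V) = \R I_2$, so $C_+(\tau, h_h + h_e) = \R_{\ge 0} I_2$ and $\g(\tau, h_h + h_e) \cong \fsl(2,\R)$, whereas $\g(\tau, h_h) = \g$. The paper's proof simply asserts ``we may assume that $h$ is hyperbolic by using the Jordan decomposition'' and thus shares this gap. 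The fix you outline is the right one: run the construction of Theorem~\ref{thm:invclass-5-grad-reduction} directly on $\fs(h)$ rather than on $\fs(h_h)$. The only missing ingredient is a Cartan involution $\theta$ with $\theta(\fs(h)) = \fs(h)$; one obtains it by first conjugating $h$ inside $\Inn(\g^\tau)$ so that $h_h \in \fp$ and $h_e \in \fk$ for a $\theta$ commuting with $\tau$, and then checking that $\theta$ swaps $W_{\pm 1}(h)$. With that in hand, Step~1 of the proof of Theorem~\ref{thm:invclass-5-grad-reduction} goes through verbatim for $\fs(h)$, producing $h_0 \in \fs(h)$ with $\fs(h) = \fs(h_0)$; combined with your identity $\g_\pm(\tau,h) = \fs_\pm(h) \cap \g^{-\tau}$ this gives $\g(\tau,h) = \g(\tau,h_0)$ directly, without passing through $h_h$.
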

\begin{proof}
  We may assume that \(h\) is hyperbolic by using the Iwasawa decomposition of \(\g\) (cf.\ Remark \ref{rem:hyperbolic-elements}).
  Let \(\fs := \fs(h)\) be defined as in Theorem \ref{thm:invclass-5-grad-reduction}.
  Then there exists a hyperbolic element \(h_0 \in \fs\) with \(\spec(\ad h_0) \subset \{0, \pm \frac{1}{2}, \pm 1\}\) and \(\fs = \g_{-1}(h_0) \oplus [\g_{-1}(h_0), \g_1(h_0)] \oplus \g_1(h_0)\).
  It remains to show that \(h_0 \in \g^\tau\).
  To this end, we first note that \(\fs\) is \(\tau\)-invariant because \(\tau(W) = -W\) and \(\ad h\) commutes with \(\tau\).
  Since \(h - h_0 \in \fz_\fg(\fs)\), the adjoint representations of \(h\) and \(h_0\) coincide on \(\fs\), so that \(\ad(h_0) \circ \tau = \tau \circ \ad(h_0)\) on \(\fs\).
  This implies \(\ad_\fs(h_0) = \ad_\fs(\tau(h_0))\), so that \(\tau(h_0) = h_0\) because \(\fs\) is simple by Theorem \ref{thm:invclass-5-grad-reduction}.
\end{proof}

\begin{lem}
  \label{lem:invclass-nontubetype-red}
  Let \(\tau \in \Aut(\g)\) be an involution such that \(\tau(\Wmin) = -\Wmin\).
  Let \(h \in \g^\tau\) be a hyperbolic element with \(\spec(\ad h) = \{0, \pm \frac{1}{2}, \pm 1\}\).
  Then the subalgebra
  \[\g_t(h) := \g_{-1}(h) \oplus [\g_{-1}(h), \g_1(h)] \oplus \g_1(h)\]
  is invariant under \(\tau\) and hermitian simple and of tube type with \(h \in \g_t(h)\). Moreover, we have \(\g(\tau,h) = \g_t(h)(\tau\lvert_{\g_t(h)},h)\).
\end{lem}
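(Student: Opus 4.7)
The plan is to verify the four assertions in sequence, using Lemma \ref{lem:tube-type-emb-lower-rank} for the structural properties of $\g_t(h)$ and a Cartan-subspace normal form to locate $h$ inside $\g_t(h)$. Since $h \in \g^\tau$, the involution $\tau$ commutes with $\ad h$ and therefore preserves each eigenspace $\g_\lambda(h)$; in particular $\g_{\pm 1}(h)$ and $[\g_{-1}(h),\g_1(h)]$ are $\tau$-invariant, and hence so is $\g_t(h)$. The hypothesis $\spec(\ad h) = \{0,\pm\tfrac{1}{2},\pm 1\}$ forces $\g_{\pm 1}(h) \neq \{0\}$, so Lemma \ref{lem:tube-type-emb-lower-rank} applies and yields that $\g_t(h)$ is hermitian simple of tube type.

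To establish $h \in \g_t(h)$, I would fix a Cartan involution $\theta$ of $\g$ commuting with $\tau$ with $\theta(h) = -h$, pick a maximal abelian subspace $\fa \subset \g^{-\theta}$ containing $h$, and let $H_1,\ldots,H_r$ denote the coroots of $2\varepsilon_1,\ldots,2\varepsilon_r$. Writing $h = \sum_{k=1}^r \lambda_k H_k$, the requirement that every restricted root evaluate to an element of $\{0,\pm\tfrac{1}{2},\pm 1\}$ on $h$, coupled with $2\varepsilon_k(h) = 2\lambda_k$, $(\varepsilon_i \pm \varepsilon_j)(h) = \lambda_i \pm \lambda_j$ and, in the $(BC_r)$ case, $\varepsilon_k(h) = \lambda_k$, together with the presence of $\pm 1 \in \spec(\ad h)$, forces $\lambda_k \in \{0,\pm\tfrac{1}{2}\}$ for every $k$. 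The Weyl group of signed permutations from Remark \ref{rem:herm-weylgrp} is realized by inner automorphisms of $\g$, so after such a conjugation we may assume $h = \tfrac{1}{2}\sum_{k \in S} H_k$ for some nonempty $S \subset \{1,\ldots,r\}$. In this normal form $\g^{\pm 2\varepsilon_k} \subset \g_{\pm 1}(h)$ for every $k \in S$, hence $\R H_k = [\g^{2\varepsilon_k},\g^{-2\varepsilon_k}] \subset [\g_{-1}(h),\g_1(h)] \subset \g_t(h)$, and therefore $h \in \g_t(h)$. Since conjugation by an inner automorphism $\varphi$ carries $\g_t(h)$ to $\g_t(\varphi(h))$, this property descends to the original (un-normalized) $h$.

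For the final identity $\g(\tau,h) = \g_t(h)(\tau|_{\g_t(h)},h)$, I would compare the defining cones on both sides. Since $\g_{\pm 1}(h) \subset \g_t(h)$ and $\g_t(h)$ is $\tau$-invariant, one has $\g_t(h) \cap \g^{-\tau} \cap \g_{\pm 1}(h) = \g^{-\tau} \cap \g_{\pm 1}(h)$, so the $\g^{-\tau}$-condition transfers cleanly. For the minimal cones, Lemma \ref{lem:jordan-herm-cone} applied to $\g_t(h)$ identifies $\Wmin(\g_t(h)) \cap \g_{\pm 1}(h)$, up to sign, with $\pm\oline{\Omega_V}$ in the Jordan algebra $V := \g_1(h)$; conversely, every element of $\Wmin(\g) \cap \g_{\pm 1}(h)$ is a nilpotent element of convex type in $\g$ and hence, by Lemma \ref{lem:assoc-wedge-simple} combined with Remark \ref{rem:tube-type-h-element}(d), lies in $\pm\oline{\Omega_V}$. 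Choosing the sign of $\Wmin(\g_t(h))$ so that its intersection with $\g_1(h)$ agrees with $\Wmin(\g) \cap \g_1(h)$ automatically ensures $\tau(\Wmin(\g_t(h))) = -\Wmin(\g_t(h))$, since the minimal cone in a tube-type hermitian algebra is determined by its intersection with the top graded piece; consequently $C_\pm(\tau,h) = C_\pm(\tau|_{\g_t(h)},h)$ and the desired equality $\g(\tau,h) = \g_t(h)(\tau|_{\g_t(h)},h)$ follows. The main obstacle is the normalization step in the second paragraph, which requires a careful enumeration of admissible $\lambda_k$ using both long and (in the $(BC_r)$ case) short restricted roots, and the exclusion of exotic fractional values such as $\pm\tfrac{1}{4}$ hinges crucially on the equality $\spec(\ad h) = \{0,\pm\tfrac{1}{2},\pm 1\}$ rather than mere inclusion.
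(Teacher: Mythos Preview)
Your proof is correct and follows the same overall architecture as the paper's: $\tau$-invariance via commutation with $\ad h$, structural properties from Lemma~\ref{lem:tube-type-emb-lower-rank}, and the cone identity via the fact that nilpotent elements of convex type all lie in $\pm\Wmin$. The one genuine difference is your treatment of $h \in \g_t(h)$. You establish this by normalizing $h$ through the Weyl group to the form $\tfrac{1}{2}\sum_{k\in S} H_k$ and then reading off $H_k \in [\g^{2\varepsilon_k},\g^{-2\varepsilon_k}] \subset [\g_{-1}(h),\g_1(h)]$; the paper instead invokes \cite[Thm.~III.9]{HNO94} directly to produce an $\fsl(2)$-triple $(h,x,y)$ with $x \in \g_1(h)$ and $y \in \g_{-1}(h)$ nilpotent of convex type, so that $h = [x,y] \in [\g_1(h),\g_{-1}(h)]$ immediately. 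The paper's route is shorter and does double duty, since the same elements $x,y \in \Wmin(\g)$ witness that $\Wmin(\g)\cap\g_t(h)$ is nonzero (hence generating), which is exactly the input needed for the cone comparison; your argument obtains this nonvanishing separately via Remark~\ref{rem:tube-type-h-element}(d). Both are valid, and your careful remark that the equality $\spec(\ad h) = \{0,\pm\tfrac12,\pm 1\}$ (not mere containment) is what rules out coefficients like $\pm\tfrac14$ in the $(C_r)$ case is a point the paper leaves implicit.
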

\begin{proof}
  The \(\tau\)-invariance of \(\g_t(h)\) is a consequence of \(\tau \circ \ad h = \ad h \circ \tau\).
  Lemma \ref{lem:tube-type-emb-lower-rank} implies that \(\g_t(h)\) is a hermitian simple Lie algebra of tube type.
  Since \(h\) is hyperbolic and induces a 5-grading on \(\g\), it is contained in the range of an \((H_1)\)-inclusion \(\fsl(2,\R)^r \rightarrow \g\), where \(r = \rk_\R \g\), and there exist nilpotent elements \(x \in \g_1(h)\) and \(y \in \g_{-1}(h)\) of convex type such that \((h,x,y)\) is an \(\fsl(2)\)-triple and \(x,y \in \Wmin(\g)\) (cf.\ \cite[Thm.\ III.9]{HNO94}). 
  Hence, \(\Wmin(\g) \cap \g_t(h)\) is non-zero and therefore generating in \(\g_t(h)\), so that \(\Wmin(\g) \cap \g_{\pm 1}(h) = \Wmin(\g_t(h)) \cap (\g_t(h))_{\pm 1}(h)\) because all nilpotent elements of convex type in \(\g_t(h)\) are contained in \(\Wmin(\g_t(h)) \cup (-\Wmin(\g_t(h)))\).
  This proves that \(\g(\tau,h) = \g_t(h)(\tau\lvert_{\g_t(h)},h)\).
\end{proof}

In the case of 3-gradings, the following lemma translates our classification problem into the Jordan algebra context:

\begin{prop}
  \label{prop:invclass-jordan-inv}
  Suppose that \(\g\) is of tube type, let \(\theta\) be a Cartan involution of \(\g\), and let \(0 \neq h \in \fp\) be such that \(\g = \g_{-1}(h) \oplus \g_0(h) \oplus \g_1(h)\).
  \begin{enumerate}
    \item Let \(\tau \in \Aut(\g)\) be an involution with \(\tau(\Wmin) = -\Wmin\), \(\theta \circ \tau = \tau \circ \theta\), and \(\tau(h) = h\).
      Moreover, endow \(V := \g_1(h)\) with the structure of a euclidean simple Jordan algebra as in {\rm Example \ref{ex:KKT}}.
      Then \(-\tau\lvert_V \in \Aut(V)\).
    \item Conversely, for every Jordan algebra involution \(\sigma \in \Aut(V)\), there exists a unique extension to an involutive automorphism \(\sigma_\g \in \Aut(\g)\) such that \(\theta \circ \sigma_\g = \sigma_\g \circ \theta\) and \(\sigma_\g(h) = h\).
  \end{enumerate}
\end{prop}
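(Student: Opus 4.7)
For part (a), I would start by observing that $\tau(h) = h$ forces $\tau$ to preserve the eigenspaces of $\ad(h)$, in particular $\tau(V) = V$. Combined with $\tau(\Wmin) = -\Wmin$ and Lemma~\ref{lem:jordan-herm-cone}, this yields $\tau(\oline{\Omega_V}) = -\oline{\Omega_V}$, so the linear involution $\phi := -\tau|_V$ lies in $G(\Omega_V)$. It remains to show that $\phi$ fixes the Jordan unit so that $\phi \in \Aut(V)$. Recall that in the KKT construction the unit is any $x \in V$ for which $(h, x, -\theta(x))$ is an $\fsl(2)$-triple. The key step is to find such an $x$ with $\tau(x) = -x$, which I would obtain via a fixed-point argument on the set of admissible units, exploiting that this set is a homogeneous space under a compact subgroup of $G(\Omega_V)$ on which $\tau$ acts by a continuous involution. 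Once such $x$ is chosen, the relation $\theta\tau = \tau\theta$ forces $\tau(y) = -y$ for $y := -\theta(x)$, and the computation
\[
\phi(a \cdot b) \;=\; -\tfrac{1}{2}\tau([[a,y],b]) \;=\; -\tfrac{1}{2}[[\tau a, \tau y], \tau b] \;=\; \tfrac{1}{2}[[\tau a, y], \tau b] \;=\; \phi(a) \cdot \phi(b)
\]
confirms that $\phi$ preserves the Jordan product.

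For part (b), the construction is forced graded piece by graded piece. Given $\sigma \in \Aut(V)$, set $\sigma_\g|_V := \sigma$; then $\theta$-equivariance forces $\sigma_\g|_{\g_{-1}(h)} := \theta\sigma\theta$ via the identification $\g_{-1}(h) = \theta(V)$; and on $\g_0(h) \cong \str(V) = \Der(V) \oplus L(V)$ define $\sigma_\g(Z) := \sigma \circ Z \circ \sigma^{-1}$, where $Z$ is viewed as an endomorphism of $V$. Since $\sigma \in \Aut(V)$, this preserves $\Der(V)$ and sends $L(v)$ to $L(\sigma v)$, and moreover fixes $h$ (which acts as a scalar on each graded summand). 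The main verification is that $\sigma_\g$ respects the Lie brackets $[\g_0(h),\g_{\pm 1}(h)] \subset \g_{\pm 1}(h)$ and $[\g_1(h),\g_{-1}(h)] \subset \g_0(h)$; these reduce directly to $\sigma$ being a Jordan automorphism, via $\sigma\circ L(v)\circ \sigma^{-1}=L(\sigma v)$ and the fact that $[a,\theta b] \in \g_0(h)$ acts on $V$ through a polynomial in the $L$- and $\Der$-operators determined by $a,b$. Uniqueness is immediate: any admissible extension must agree with $\sigma$ on $V$, with $\theta\sigma\theta$ on $\g_{-1}(h)$ by $\theta$-equivariance, and with the above formula on $\g_0(h) = [\g_{-1}(h),\g_1(h)]$ by bracket-preservation.

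The main obstacle is the step in part (a) of producing a $\tau$-anti-invariant admissible unit: because the KKT Jordan product depends on the choice of $x$, the mere fact that $\phi$ preserves $\oline{\Omega_V}$ does not immediately give a Jordan automorphism with respect to the chosen product. Handling this cleanly requires identifying the set $\{x \in V : [x,-\theta(x)] = h\}$ with a homogeneous space under a compact subgroup of $G(\Omega_V)$ and applying a standard averaging or fixed-point argument for the induced involution $\tau$.
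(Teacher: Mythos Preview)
Your argument for (a) is workable but takes a longer route than the paper. The paper does not search for a $\tau$-anti-invariant unit at all: after noting $-\tau|_V\in G(\Omega_V)$ exactly as you do, it observes that $-\tau|_V$ is \emph{orthogonal} with respect to the associative inner product $\langle a,b\rangle=-\beta(a,\theta b)$, which is immediate from $\tau\theta=\theta\tau$ and the $\Aut(\g)$-invariance of the Killing form $\beta$. It then invokes the standard fact $G(\Omega_V)\cap O(V)\subset\Aut(V)$ from \cite[p.\ 55]{FK94}. This one-line argument makes your ``main obstacle'' disappear: no special unit has to be located and no fixed-point argument is needed, and the conclusion holds for whichever $\fsl_2$-triple was used in the KKT construction. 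Your route, by contrast, requires you first to produce a $\tau$-anti-invariant admissible unit and then to argue that the conclusion transfers to the Jordan product defined by the originally chosen unit; the second step is not addressed in your sketch, and it is precisely what the orthogonality characterisation handles for free. (Incidentally, your description of the set of admissible units as a positive-dimensional homogeneous space is not quite right: once the sign convention $\Wmin\cap V=\oline{\Omega_V}$ is fixed, the condition $[x,-\theta x]=h$ together with $x\in\Omega_V$ pins down $x$ uniquely, so the fixed point is automatic---but recognising this already amounts to the paper's argument in disguise.)

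For (b) your explicit graded construction is correct and is essentially the content of the reference \cite[p.\ 797f.]{Koe67} that the paper cites rather than spelling out.
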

\begin{proof}
  (a) We may assume that the Jordan algebra structure on \(V\) is chosen in such a way that \(\Wmin \cap V = \oline{\Omega_V}\) (cf.\ Lemma \ref{lem:jordan-herm-cone}).
  Recall from Example \ref{ex:KKT} that \(V\) is a simple euclidean Jordan algebra and that the associative scalar product is given by \(\la x, y\ra := -\beta(x,\theta y)\, x,y \in V\), where \(\beta\) is the Cartan--Killing form of \(\g\).
  Combining these facts, we see that \(-\tau\) is an automorphism of the irreducible symmetric cone \(\Omega_V\) and is orthogonal.
  Now the claim follows from \(G(\Omega_V) \cap O(V) \subset \Aut(V)\) (cf.\ \cite[p.\ 55]{FK94}).

  (b) Let \(\sigma \in \Aut(V)\) be an involutive automorphism of \(V\).
  Then, since \(\g\) is isomorphic to the Lie algebra obtained from \(V\) with the Kantor--Koecher--Tits construction (cf.\ Example \ref{ex:KKT}), there exists an extension \(\sigma_\g\) of \(\sigma\) to an involutive automorphism of \(\g\) which is uniquely determined by
  \[\sigma_\g(\theta x) := \theta \sigma(x) \quad \text{and} \quad \sigma_\g(x) := \sigma(x) \quad \text{for } x \in V\]
  (cf. \cite[p.\ 797f.]{Koe67}). In particular, \(\sigma_\g\) preserves the grading of \(\g\) and therefore the eigenspaces of \(\ad h\), so that \(\sigma_\g\) commutes with \(\ad h\). Since \(\g\) is  simple, this implies \(\sigma_\g(h) = h\).
\end{proof}

\begin{rem}
  \label{rem:jordan-inv-ext-lie}
  Let \(\g\), \(\theta\), and \(h \in \fp\) be defined as in Proposition \ref{prop:invclass-jordan-inv}.

  Let \(\tau_h := e^{i\pi \ad h}\) be the Cayley type involution induced by \(h\) and let \(\tau \in \Aut(\g)\) be an involution with \(\tau(h) = h\) and \(\tau(\Wmin) = -\Wmin\) that commutes with \(\theta\). Note that \(\tau\) and \(\tau_h\) also commute.
  Then \(\tau_h\lvert_V = -\id_V\), so that \(-\tau\lvert_V = (\tau_h \tau)\lvert_V\).
  Conversely, the extension of \(-\tau\lvert_V\) to \(\g\) that we obtain from Proposition \ref{prop:invclass-jordan-inv}(b) equals \(\tau\tau_h\).
  Hence, every involution \(\tau \in \Aut(\g)\) for which there exists an element \(h \in \g^{\tau, -\theta}\) that induces a 3-grading on \(\g\)  is a product of the form \(\tau = \tau_h \circ \sigma_\g = \sigma_\g \circ \tau_h\), where \(\sigma_\g\) is an extension of an involutive Jordan algebra automorphism \(\sigma \in \Aut(V)\).
\end{rem}

Let \(V\) be a simple euclidean Jordan algebra and let \(\sigma \in \Aut(V)\) be an involutive automorphism.
Then \(V^\sigma\) is a euclidean subalgebra of \(V\).
We recall from Proposition \ref{prop:class-inv-jordan}(a) that \(V^\sigma\) is either simple or the direct sum of two simple euclidean Jordan algebras.

In the context of Proposition \ref{prop:invclass-jordan-inv}, with \(\sigma := -\tau\lvert_V\), we have \(V^\sigma = \g_1^{-\tau}(h)\). In particular, \(\Wmin \cap \g_1^{-\tau}(h)\) is up to a sign the set of squares in the Jordan subalgebra \(V^\sigma\) (cf.\ Lemma \ref{lem:jordan-herm-cone}), hence is generating in \(\g_1^{-\tau}(h)\).
By applying the Cartan involution \(\theta\) on \(\g_1^{-\tau}(h)\) and the Kantor--Koecher--Tits construction on the Jordan algebra \(V^\sigma\), we obtain the theorem below. Note that this result does not rely on the classification of involutive automorphisms of real euclidean simple Jordan algebras, but only the structural result in Proposition \ref{prop:class-inv-jordan}(a).

\begin{thm}
  \label{thm:invclass-3grad-jordan}
  Suppose that \(\g\) is of tube type, let \((\tau,\theta) \in \cA(\g)\) (cf. \eqref{eq:invclass-def-aux-set}) and let \(0 \neq h \in \g^{\tau,-\theta}\) be such that \(\g = \g_{-1}(h) \oplus \g_0(h) \oplus \g_1(h)\).
  Then
  \[\g(\tau,h) = \g_{-1}^{-\tau}(h) \oplus [\g_{-1}^{-\tau}(h), \g_1^{-\tau}(h)] \oplus \g_1^{-\tau}(h).\]
  Moreover, \(\g(\tau,h)\) is either hermitian simple and of tube type or the direct sum of two hermitian simple Lie algebras of tube type.
\end{thm}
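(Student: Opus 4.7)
The plan is to translate the computation of $\g(\tau,h)$ into Jordan-algebraic terms and then apply the classification of involutions on simple euclidean Jordan algebras from Section \ref{sec:inv-jordan}.

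First, since $h \in \g^{-\theta}$ is hyperbolic with $\spec(\ad h) = \{-1, 0, 1\}$, Example \ref{ex:KKT}(b) endows $V := \g_1(h)$ with the structure of a simple euclidean Jordan algebra, whose associative scalar product is $\la x, y \ra = -\beta(x, \theta y)$. By Lemma \ref{lem:jordan-herm-cone}, we may assume that $\Wmin \cap V = \oline{\Omega_V}$ (otherwise replace $\Wmin$ by $-\Wmin$ in the argument for $V$; the signs are absorbed by $\tau$ and $\theta$). By Proposition \ref{prop:invclass-jordan-inv}(a), the map $\sigma := -\tau\lvert_V$ is an involutive automorphism of the Jordan algebra $V$, and by definition $V^\sigma = \{x \in V : \tau(x) = -x\} = \g_1^{-\tau}(h)$ is a euclidean Jordan subalgebra of $V$.

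Next, I identify the intersections with $\Wmin$. Since $V^\sigma$ is a euclidean Jordan subalgebra of $V$ with unit element fixed by $\sigma$, the closed cone of squares $\oline{\Omega_{V^\sigma}} \subset V^\sigma$ equals $\oline{\Omega_V} \cap V^\sigma$. Consequently
\[C_+(\tau,h) = \Wmin \cap \g_1^{-\tau}(h) = \oline{\Omega_V} \cap V^\sigma = \oline{\Omega_{V^\sigma}},\]
which is generating in $V^\sigma = \g_1^{-\tau}(h)$. Applying the Cartan involution $\theta$ (which commutes with $\tau$ and satisfies $\theta(\Wmin) = \Wmin$, $\theta(\g_{\pm 1}(h)) = \g_{\mp 1}(h)$) shows analogously that $C_-(\tau,h) = -\oline{\Omega_{\theta(V^\sigma)}}$ is generating in $\g_{-1}^{-\tau}(h)$. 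Therefore $\g_\pm(\tau,h) = C_\pm(\tau,h) - C_\pm(\tau,h) = \g_{\pm 1}^{-\tau}(h)$, which yields the desired equality
\[\g(\tau,h) = \g_{-1}^{-\tau}(h) \oplus [\g_{-1}^{-\tau}(h), \g_1^{-\tau}(h)] \oplus \g_1^{-\tau}(h).\]

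For the second assertion, I appeal to Proposition \ref{prop:class-inv-jordan}(a): either $V^\sigma$ is simple, or $\sigma$ is a Peirce reflection and $V^\sigma \cong V^{(k)} \oplus V^{(\ell)}$ is a direct sum of two simple euclidean Jordan subalgebras with $k + \ell = \rk V$. In the first case, $\g(\tau,h)$ is obtained by applying the Kantor--Koecher--Tits construction to $V^\sigma$ inside $\g$ (with the restricted Cartan involution $\theta\lvert_{\g(\tau,h)}$), hence is hermitian simple and of tube type by Remark \ref{rem:tkk-onetoone}(a). In the second case, the Peirce multiplication rules \eqref{eq:peirce-mult} guarantee that $V^{(k)} \cdot V^{(\ell)} = \{0\}$, so the two summands generate commuting hermitian simple tube-type ideals in $\g(\tau,h)$, and $\g(\tau,h)$ splits accordingly as a direct sum of two such Lie algebras. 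The only nontrivial step is verifying that the Lie algebra generated by $V^\sigma$ and $\theta(V^\sigma)$ inside $\g$ coincides with the KKT-construction applied to $V^\sigma$; this follows because $[V^\sigma, \theta(V^\sigma)] \subset \g_0(h)$ acts on $V^\sigma$ by Jordan-theoretic left multiplications and derivations (cf.\ Remark \ref{rem:tkk-onetoone}(b)), recovering the structure algebra $\str(V^\sigma)$.
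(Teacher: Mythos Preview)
Your proof is correct and follows essentially the same approach as the paper, which sketches the argument in the paragraph immediately preceding the theorem: identify $\sigma = -\tau\lvert_V$ as a Jordan algebra involution via Proposition~\ref{prop:invclass-jordan-inv}, observe that $\Wmin \cap \g_1^{-\tau}(h)$ is the cone of squares in $V^\sigma$ and hence generating, transport this to $\g_{-1}^{-\tau}(h)$ via $\theta$, and invoke Proposition~\ref{prop:class-inv-jordan} together with the Kantor--Koecher--Tits construction for the structural conclusion. Your write-up is more explicit than the paper's (in particular your justification that the subalgebra generated inside $\g$ agrees with the abstract KKT algebra of $V^\sigma$, and your treatment of the Peirce-reflection case via \eqref{eq:peirce-mult}), but the strategy is the same.
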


\subsection{Involutions on hermitian simple Lie algebras of tube type and the KKT-construction}
\label{sec:inv-herm-kkt}

Proposition \ref{prop:invclass-jordan-inv} and Theorem \ref{thm:invclass-3grad-jordan} suggest that we take a closer look at the relationship between involutive automorphisms of simple euclidean Jordan algebras and their extensions to involutive automorphisms of the hermitian simple Lie algebras that we obtain from the KKT-construction (cf.\ Remark \ref{rem:tkk-onetoone}).

We recall that we assume \(\g\) to be a hermitian simple Lie algebra of real rank \(r\). In this section, we also fix a pair \((\tau,\theta) \in \cA(\g)\).

\begin{rem}
  The equivalence classes of involutive automorphisms \(\tau\) of \(\g\) can be characterized in terms of their fixed point algebras \(\g^\tau\) (cf.\ \cite{B57}).
  But two involutive automorphisms \(\tau_1,\tau_2\) on \(\g\) with isomorphic fixed point algebras can lead to non-isomorphic subalgebras \(\g(\tau_1,h)\) and \(\g(\tau_2,h)\) for \(\tau_1(h) = h = \tau_2(h)\):

  Suppose that \(\g\) is of tube type and of real rank \(r > 1\). Choose a Cartan decomposition \(\g = \fk \oplus \fp\) of \(\g\), a maximal abelian subspace \(\fa \subset \fp\), and coroots \(\{H_1,\ldots,H_r\}\) of \(\{2\varepsilon_1,\ldots,2\varepsilon_r\}\) (cf.\ \eqref{eq:tube-type}).
  Let \(H := \frac{1}{2}\sum_{k=1}^r H_k\) and set \(\tau_H := e^{i\pi\ad H}\). Then \(\tau_H\) is a Cayley type involution with \(\tau_H(\Wmin) = -\Wmin\), and we know from Theorem \ref{thm:cayley-type-class-simplepart} that \(\g(\tau_H,H) = \g\). Moreover, \(\ad H\) induces a 3-grading on \(\g\).

  As the element \(H_1 \in \fa\) is integral hyperbolic, we may also consider the involution \(\tau_{H_1} := e^{i\pi \ad H_1}\). Let \(\{X_1,\ldots,X_r\}\) be a Jordan frame in \(V := \g_1(H)\) as in Example \ref{ex:herm-jordan-frame}.
  By inspecting the action of \(\tau_{H_1}\) on the Peirce decomposition of \(V\) with respect to this Jordan frame (cf.\ Remark \ref{rem:herm-frame-roots}), we see that \(V^{\tau_{H_1}} = V^{(1)} \oplus V^{(r-1)}\).
  For each \(k \in \{1,\ldots,r\}\), we have \(\tau_{H_1}(X_k) = X_k\), so that \(\tau_{H_1}(\Wmin) = \Wmin\).
  As a result, Theorem \ref{thm:invclass-3grad-jordan} shows that \(\g(\tau_H \circ \tau_{H_1}, H)\) is the sum of two hermitian simple Lie algebras of tube type.

The involution \(\tau_H \circ \tau_{H_1}\) is induced by the integral hyperbolic element \(\frac{3}{2}H_1 + \frac{1}{2}\sum_{k=2}^r H_k\) and flips the cone \(\Wmin\), so that \(\g^{\tau_H} \cong \g^{\tau_H \circ \tau_{H_1}}\) by Proposition \ref{prop:inv-cayley-type}.
\end{rem}

\begin{lem}
  \label{lem:peirce-refl-cayley-type}
  Suppose that \(\g\) is of tube type and let \(0 \neq h \in \g^{\tau,-\theta}\) be such that \(\g = \g_{-1}(h) \oplus \g_0(h) \oplus \g_1(h)\). Endow \(V := \g_1(h)\) with the structure of a simple euclidean Jordan algebra as in {\rm Example \ref{ex:KKT}} and define \(\sigma := -\tau\lvert_V \in \Aut(V)\).
  If \(\sigma\) is a Peirce reflection {\rm (Definition \ref{def:inv-jordan})}, then \(\tau\) is of Cayley type.
\end{lem}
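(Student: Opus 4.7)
The plan is to exhibit an integral hyperbolic element $\tilde h \in \g$ with $\tau = e^{i\pi\ad\tilde h}$. Because $\tau(\Wmin) = -\Wmin$, Proposition \ref{prop:inthyp-tube-class} would then force $\tau$ to be equivalent to the standard involution $e^{i\pi\ad h_0}$ with $h_0 = \tfrac{1}{2}\sum_{k=1}^{r}H_k$, which is a Cayley type involution by the discussion preceding Section \ref{sec:cayley-type-inv}; since Cayley-type-ness is invariant under the equivalence relation of Definition \ref{def:auto-equiv}, $\tau$ will be of Cayley type.

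To construct $\tilde h$, I apply Proposition \ref{prop:class-inv-jordan} to the involution $\sigma = -\tau\lvert_V$ on the simple euclidean Jordan algebra $V = \g_1(h)$: one may choose a Jordan frame $F = \{c_1,\ldots,c_r\}$ of $V$ with $\sigma = P(2c-e)$ and $c = c_1 + \cdots + c_k$ for some $0 \leq k \leq r$. By Remark \ref{rem:herm-frame-roots}(b), the subspace $\fa := \spann(L(F)) \subset \g^{-\theta}$ is maximal abelian, the restricted root system is of type $(C_r)$, the coroots of $2\varepsilon_i$ are $H_i = 2L(c_i)$, and under the KKT identification $h \leftrightarrow L(e)$ one has $h = \tfrac{1}{2}\sum_{i=1}^r H_i$. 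I set
\[
  \tilde h := \tfrac{1}{2}\sum_{i=1}^{k} H_i - \tfrac{1}{2}\sum_{j=k+1}^{r} H_j \in \fa.
\]
Evaluating $\tilde h$ on the roots $\pm 2\varepsilon_i$ and $\pm(\varepsilon_i \pm \varepsilon_j)$ of type $(C_r)$ yields values in $\{0, \pm 1\}$, so $\tilde h$ is integral hyperbolic.

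It remains to verify $e^{i\pi\ad\tilde h} = \tau$. Both sides commute with $\theta$ (since $\tilde h, h \in \g^{-\theta}$), and $V \cup \theta V$ generates $\g$ as a Lie algebra because $[V,\theta V] = \str(V) = \g_0(h)$ for simple euclidean $V$ in the KKT picture, so it suffices to check the equality on $V$. Via Remark \ref{rem:herm-frame-roots}(a), the Peirce decomposition of $V$ agrees with the root space decomposition: $V_i = \g^{2\varepsilon_i}$ and $V_{ij} = \g^{\varepsilon_i + \varepsilon_j}$. Using $\tau\lvert_V = -\sigma$, the involution $\tau$ acts as $-\id$ on $V_1(c) \oplus V_0(c)$ and as $+\id$ on $V_{1/2}(c)$. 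The corresponding pieces $V_i, V_{ij}$ with $i,j \leq k$ or $i,j > k$ lie in $V_1(c) \oplus V_0(c)$ and carry the eigenvalue $\pm 1$ of $\ad\tilde h$, so $e^{i\pi\ad\tilde h}$ acts as $-1$ there; the remaining pieces $V_{ij}$ with $i \leq k < j$ lie in $V_{1/2}(c)$ and carry the eigenvalue $0$ of $\ad\tilde h$, so $e^{i\pi\ad\tilde h}$ acts as $+1$. All cases match $\tau$.

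The one nontrivial point is simultaneously adapting a Jordan frame of $V$ to the Peirce reflection $\sigma$ and to a root-space decomposition of $\g$, which is precisely what Remark \ref{rem:herm-frame-roots}(b) delivers. Everything else is a mechanical Peirce-component case check. The reason the argument is confined to Peirce reflections is that only for $\sigma = P(2c-e)$ does the correction $\tilde h - h = -\sum_{j > k} H_j$ lie in $\fa \cap L(V)$ with integer half-periods producing the right signs; a general involution of $V$ need not be expressible as $e^{i\pi \ad x}$ for any hyperbolic $x \in \g_0(h)$.
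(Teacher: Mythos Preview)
Your proof is correct and follows essentially the same strategy as the paper's: exhibit an integral hyperbolic element whose associated involution equals $\tau$, then invoke Proposition~\ref{prop:inthyp-tube-class}. The paper instead writes $\tau$ as the product $\tau_{h'}\circ\tau_h$ of two commuting involutions coming from $\fa$ and appeals to the uniqueness statement in Remark~\ref{rem:jordan-inv-ext-lie} to identify this product with $\tau$; your direct verification on the Peirce pieces of $V$ followed by the $\theta$-compatibility argument to extend to $\g$ is a slightly more hands-on but equally valid route. Your choice $\tilde h=\tfrac12\sum_{i\le k}H_i-\tfrac12\sum_{j>k}H_j$, already satisfying $\spec(\ad\tilde h)\subset\{0,\pm 1\}$, is in fact cleaner than the paper's element $h+h'$, whose spectrum is larger.
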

\begin{proof}
  Suppose that \(\sigma\) is a Peirce reflection. Then there exists a Jordan frame \(F := \{c_1,\ldots,c_r\}\) of \(V\) such that \(\sigma(c_k) = c_k\) for all \(1 \leq k \leq r\) and \(\sigma = P(w)\), where \(w = c_1 + \ldots + c_s\) for some \(1 \leq s \leq r\). In particular, for \(x \in V_{ij}\), we have \(\sigma(x) = x\) if \(1 \leq i < j \leq s\) or \(s < i < j \leq r\) and \(\sigma(x) = -x\) if \(1 \leq i \leq s < j \leq r\) (cf.\ \cite[1.7.1]{BH98}).

  Identify \(\g_0(h)\) with the structure algebra \(\str(V) = \Der(V) \oplus L(V)\) of \(V\) (cf.\ Remark \ref{rem:tkk-onetoone}).
  Then \(\fa := \spann(L(F))\) is maximal abelian in \(\fp\), the root system \(\Sigma \subset \fa^*\) is of type \((C_r)\) as in \eqref{eq:tube-type}, and the elements \(2L(c_k)\) are coroots of \(2\varepsilon_k\) for \(1 \leq k \leq r\) (cf.\ Remark \ref{rem:herm-frame-roots}).
  The space \(\fa\) is contained in \(\g^\tau\): To see this, recall from Remark \ref{rem:jordan-inv-ext-lie} that the unique extension of \(-\tau\lvert_V\) with the properties from Proposition \ref{prop:invclass-jordan-inv} is given by \(\tau\circ\tau_h = \tau_h \circ \tau\), where \(\tau_h := e^{i\pi\ad h}\). Thus, we have for \(x \in \spann(F)\):
  \[(\tau_h \circ \tau)L(x) = L((\tau_h \circ \tau)x) = L(-\tau(x)) = L(\sigma(x)) = L(x)\]
  (cf.\ \cite[p.\ 795]{Koe67}). But since \(L(x) \in \str(V) \subset \g_0(h)\), we have \(\tau_h(L(x)) = L(x)\), which shows \(L(x) \in \g^\tau\).

  Let \(h' := \sum_{k=1}^s L(c_k)\) and set \(\tau_{h'} := e^{i\pi\ad h'}\). Then we have \(\ad h \circ \tau_{h'} = \tau_{h'} \circ \ad h\), and \(\tau_{h'}\) and \(\sigma\) coincide on \(V\), so that \(\tau_{h'} \circ \tau_h = \tau\) (cf.\ Remark \ref{rem:jordan-inv-ext-lie}). Since \(\tau_{h'} \circ \tau_h\) is a Cayley type involution, this proves the claim.
\end{proof}

To prepare the following theorem, we first recall that involutive automorphisms \(\tau \in \Aut(\g)\) with \(\tau(\Wmin) = -\Wmin\) for a hermitian simple Lie algebra \(\g \not\cong \so(2,n)\) of tube type can be divided into three (disjoint) classes: Cayley type involutions, involutions \(\tau\) with the property that \(\rk_\R \g^\tau = \rk_\R \g\) and \(\g^\tau\) is simple, and those with \(\rk_\R \g = 2\rk_\R \g^\tau\) (cf.\ \cite[Thm.\ 3.2.8]{HO97}).

\begin{thm}
  \label{thm:inv-hermlie-jordan}
  Suppose that \(\g\) is of tube type and let \(0 \neq h \in \g^{\tau,-\theta}\) be such that \(\g = \g_{-1}(h) \oplus \g_0(h) \oplus \g_1(h)\). Endow \(V := \g_1(h)\) with the structure of a simple euclidean Jordan algebra as in {\rm Example \ref{ex:KKT}} and define \(\sigma := -\tau\lvert_V \in \Aut(V)\). Then the following assertions hold:
  \begin{enumerate}
    \item If \(\tau\) is a Cayley type involution, then \(\sigma\) is a Peirce reflection.
    \item If \(\tau\) is of non-Cayley type with \(\rk_\R \g^\tau = \rk_\R \g\), then \(V^\sigma\) is simple and non-trivial with \(\rk V^\sigma = \rk V\).
    \item If \(\rk_\R \g = 2\rk_\R \g^\tau\), then \(\sigma\) is non-split {\rm (Definition \ref{def:inv-jordan})}.
  \end{enumerate}
\end{thm}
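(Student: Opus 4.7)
The plan is to handle part (a) by a direct computation on restricted root spaces after writing $\sigma_\g = \tau \circ \tau_h$, and then to derive parts (b) and (c) from the identity $\rk_\R \g^\tau = \rk V^\sigma$, which in turn follows from analyzing a $\sigma$-adapted Jordan frame and the associated maximal abelian subspace of $\fp$.

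For (a), I begin with Remark \ref{rem:jordan-inv-ext-lie}, which identifies the Jordan extension $\sigma_\g$ of $\sigma$ with $\tau\circ\tau_h$, where $\tau_h = e^{i\pi\ad h}$. Because $(\g,\tau)$ is of Cayley type, there is an integral hyperbolic generator $h'\in\fp\cap\g^\tau$ for $\tau$ with $\spec(\ad h')\subset\{-1,0,1\}$: Lemma \ref{lem:inthyp-wmin-flip-3grad} produces such a generator up to equivalence, and Lemma \ref{lem:std-wedge-hypelements} applied inside $\g^\tau$ places it in $\fp$. The hypothesis $\tau(h) = h$ forces the $\ad(h')$-eigenspace decomposition of $h$ to have only even-index components, hence $h\in\g_0(h')$, so $h$ and $h'$ commute and $\sigma_\g = e^{i\pi\ad(h+h')}$. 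Taking a common maximal abelian subspace $\fa\subset\fp$ containing both elements, with restricted root system of type $(C_r)$, I write $h = \tfrac12\sum_k H_k$ (cf.\ Remark \ref{rem:tube-type-3grad-sign}) and $h' = \sum_k\mu_k H_k$; Lemma \ref{lem:inthyp-wmin-flip} together with the eigenvalue bound on $\ad(h')$ then pins down $\mu_k\in\{\pm\tfrac12\}$. Evaluating $\sigma_\g = e^{i\pi\ad(h+h')}$ on the root spaces $V_k = \g^{2\varepsilon_k}$ and $V_{ij} = \g^{\varepsilon_i+\varepsilon_j}$ (Remark \ref{rem:herm-frame-roots}), I find that $\sigma$ is trivial on every $V_k$ and acts on $V_{ij}$ by $+1$ when $\mu_i,\mu_j$ share a sign and by $-1$ otherwise. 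Comparing with the Peirce-reflection formulae on the Peirce decomposition, this is exactly the action of $P(2c-e)$ for the idempotent $c := \sum_{k\in S}X_k$ with $S := \{k:\mu_k = +\tfrac12\}$.

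For (b) and (c), the non-Cayley hypothesis on $\tau$ combined with the contrapositive of Lemma \ref{lem:peirce-refl-cayley-type} rules out $\sigma$ being a Peirce reflection, so Proposition \ref{prop:class-inv-jordan}(a) yields that $V^\sigma$ is simple. To distinguish split from non-split via the rank of $\g^\tau$, I aim for the identity $\rk_\R\g^\tau = \rk V^\sigma$. The setup is: choose a Jordan frame $F = \{c_1,\ldots,c_r\}$ of $V$ with $\sigma(F) = F$ (Proposition \ref{prop:class-inv-jordan}(b)) and set $\fa := L(\spann F)$, which is maximal abelian in $\fp$ by Remark \ref{rem:herm-frame-roots}(b). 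Since $\tau_h$ acts trivially on $\g_0(h)\supset L(V)$, we have $\tau|_\fa = \sigma_\g|_\fa$, and this operator permutes $\{L(c_k)\}$ through $\sigma$. In the split case the permutation is trivial and $\fa\subset\fp\cap\g^\tau$ is automatically maximal abelian, giving $\rk_\R\g^\tau = r = \rk V^\sigma$. In the non-split case $\tau$ swaps $L(c_k)\leftrightarrow L(c_{k+s})$ after reordering (Remark \ref{rem:class-inv-jordan}(b)), and Lemma \ref{lem:jordan-nonsplit-frame} gives $\dim\fa^\tau = s = r/2 = \rk V^\sigma$.

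The principal obstacle lies in the non-split case: one must show $\fa^\tau$ is actually maximal abelian in $\fp\cap\g^\tau$, not merely abelian of the correct dimension. The centralizer of $\fa^\tau$ in $\fp$ gains contributions only from the roots vanishing on $\fa^\tau$, namely $\pm(\varepsilon_i-\varepsilon_{i+s})$; thus any enlargement arises from $\tau$-fixed elements in $\{x-\theta x : x\in\g^{\varepsilon_i-\varepsilon_{i+s}}\}$. Since $\tau$ exchanges $\varepsilon_i$ and $\varepsilon_{i+s}$ and hence negates these roots, membership of $x-\theta x$ in $\fp\cap\g^\tau$ reduces to the equation $\tau(x) = -\theta(x)$ on each $\g^{\varepsilon_i-\varepsilon_{i+s}}$. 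I would rule out nonzero solutions by analyzing the commuting involution $\theta^{-1}\circ\tau$ on this real root space, using the explicit realization of each non-split Jordan involution from \cite[Table 1.5.1]{BH98} to track signs case by case. Once the identity $\rk_\R\g^\tau = \rk V^\sigma$ is in hand, parts (b) and (c) follow immediately from Proposition \ref{prop:class-inv-jordan}.
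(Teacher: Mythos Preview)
Your argument for (a) is essentially the same as the paper's: reduce to $\tau = e^{i\pi\ad h_0}$ for a $3$-grading element $h_0$ commuting with $h$, put both in a common maximal abelian $\fa\subset\fp$, and read off the Peirce-reflection pattern from the action on the root spaces $V_k$ and $V_{ij}$.

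For (b) and (c), however, you take a substantially harder route than the paper, and the hard step is not actually completed. You aim for the general identity $\rk_\R\g^\tau = \rk V^\sigma$, which in the non-split case forces you to prove that $\fa^\tau$ is \emph{maximal} abelian in $\fp\cap\g^\tau$. You correctly isolate the obstruction (possible $\tau$-fixed vectors in $\{x-\theta x : x\in\g^{\varepsilon_i-\varepsilon_{i+s}}\}$) but only propose to dispose of it by a case-by-case check against \cite[Table 1.5.1]{BH98}; this is not carried out, so the proof as written has a gap. The paper avoids this difficulty entirely by arguing each direction separately and much more cheaply. For (b), the hypothesis $\rk_\R\g^\tau = \rk_\R\g$ lets you \emph{choose} a maximal abelian $\fa\subset\fp$ already lying in $\fh$ with $h\in\fa$; then $\tau$ fixes each $H_k$, hence preserves every restricted root space, and since $\g^{2\varepsilon_k}=\R X_k$ with $X_k\in\Wmin\cup(-\Wmin)$ and $\tau(\Wmin)=-\Wmin$, one gets $\sigma(X_k)=X_k$ for all $k$, so $\sigma$ is split (Lemma~\ref{lem:jordan-nonsplit-inv-subalg}); simplicity of $V^\sigma$ then follows from Lemma~\ref{lem:peirce-refl-cayley-type} and Proposition~\ref{prop:class-inv-jordan}(a). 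For (c), the paper argues by contradiction: if $\sigma$ were split, Proposition~\ref{prop:class-inv-jordan}(b) gives a Jordan frame $F$ with $\spann(F)=\spann(F)^\sigma$, whence $L(\spann F)\subset\fh\cap\fp$ is abelian of dimension $r$, contradicting $\rk_\R\fh = r/2$. Both arguments are one-paragraph and require no maximality analysis of $\fa^\tau$; you should replace your rank-identity strategy with these.
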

\begin{proof}
  (a) Suppose that \(\tau\) is a Cayley type involution. Then, according to \cite[Lem.\ 1.3.10, Prop.\ 3.1.14]{HO97}, there exists an element \(h_0 \in \fz(\g^\tau) \cap \fp\) such that \(\tau = e^{i\pi\ad h_0}\) and \(\g = \g_{-1}(h_0) \oplus \g_0(h_0) \oplus \g_1(h_0)\). In particular, we have \([h,h_0] = 0\).
  Choose a maximal abelian subspace \(\fa \subset \fp\) of dimension \(r = \rk_\R \g\) containing \(h\) and \(h_0\). The restricted root system \(\Sigma \subset \fa^*\) of \(\g\) is then of type \((C_r)\) as in \eqref{eq:tube-type}. Let \(H_1,\ldots,H_r \in \fa\) be the coroots of \(2\varepsilon_1,\ldots,2\varepsilon_r \in \Sigma\) respectively. Since \(h\) and \(h_0\) induce 3-gradings on \(\g\), we may assume that the coroots are chosen in such a way that
  \[h = \frac{1}{2}\sum_{k=1}^r H_k \quad \text{and} \quad h_0 = \frac{1}{2}\left(\sum_{k=1}^\ell H_k - \sum_{k=\ell + 1}^r H_k\right)\]
  for some \(0 \leq \ell \leq r\) (cf.\ Remark \ref{rem:tube-type-3grad-sign}). Moreover, we may choose a Jordan frame \(F = \{X_1,\ldots,X_r\}\) as in Remark \ref{rem:herm-frame-roots} such that \eqref{eq:herm-frame-roots} holds. Now an inspection of the action of \(\sigma\) on the root spaces shows that, on \(V_{ij}\), the involution \(\sigma\) acts by the identity map if \(1 \leq i,j \leq \ell\) or \(\ell < i,j \leq r\), and by multiplication with \((-1)\) otherwise. Hence, \(-\tau\) is a Peirce reflection on \(V\) (cf.\ \cite[1.7.1]{BH98}).

  (b) We first show that \(\sigma\) must be a split involution: Since \(\rk_\R \g^\tau = \rk_\R \g\), we can choose a maximal abelian subspace \(\fa \subset \fp\) such that \(\fa \subset \g^\tau\) and \(h \in \fa\).
  Choose \(H_1,\ldots,H_r \in \fa\) and the Jordan frame \(F = \{X_1,\ldots,X_r\}\) as in (a).
  Then \(\tau(H_k) = H_k\) for all \(1 \leq k \leq r\) implies that \(\tau\) preserves all root spaces.
  Since \(\g^{2\varepsilon_k} = \R X_k\) and \(X_k \in \Wmin \cup (-\Wmin)\), we must have \(\tau(X_k) = -X_k\), i.e.\ \(\sigma(X_k) = X_k\).
  In particular, \(\sigma\) preserves the subalgebra \(V^{(1)} = \R X_k\), hence it can not be non-split by Lemma \ref{lem:jordan-nonsplit-inv-subalg}.
  Thus, we must have \(\rk V^\sigma = \rk V\).

  We already observed in Lemma \ref{lem:peirce-refl-cayley-type} that \(\sigma\) cannot be a Peirce reflection, so that \(V^\sigma\) must be simple by Proposition \ref{prop:class-inv-jordan}.

  (c) Suppose that \(\sigma\) is a split involution, i.e.\ \(\rk V = \rk V^\sigma\). By Proposition \ref{prop:class-inv-jordan}, there exists a Jordan frame \(F \subset V\) such that \(\sigma(F) = F\) and, for \(R := \spann(F)\), we have \(R = R^\sigma\).
  Recall from Remark \ref{rem:tkk-onetoone} that we can identify \(\g_0(h)\) with the structure algebra \(\str(V) = \Der(V) \oplus L(V)\), and that this decomposition is a Cartan decomposition of \(\str(V)\), so that we have \(L(V) \subset \fp\).
  Now our assumptions imply that \(L(R)\) is an abelian subspace of dimension \(r = \rk_\R \g\) contained in \(\g^\tau \cap \fp\), which is a contradiction because \(\rk_\R \g^\tau = \frac{1}{2}\rk_\R \g\).
\end{proof}

Let \(\g, \tau, h,\) and \(V\) be as above and suppose that \(\tau\) is not of Cayley type.
If \(\rk V \neq 2\), then there exists at most one equivalence class of involutions \(\sigma \in \Aut(V)\) such that \(\sigma\) is non-split and at most one equivalence class such that \(V^\sigma\) is simple with \(\rk V = \rk V^\sigma\) (cf.\ \cite[Table 1.5.1]{BH98}). 
Thus, Theorem \ref{thm:inv-hermlie-jordan} allows us to determine the fixed point algebra of the involution \(-\tau\lvert_V\) depending on \(\g^\tau\).

In the case where \(\rk V = 2\), i.e.\ \(V \cong \R \rtimes \R^{n}\) for some \(n \in \N\), we may have  several equivalence classes of split involutions which are not Peirce reflections, as we have seen in Example \ref{ex:inv-jordan-minkowski}, so that we need a more elaborate argument here:

\begin{prop}
  \label{prop:jordan-inv-herm-rk2}
  Let \(\g = \so(2,n)\) for \(n \in \N, n \neq 2,\) and let \((\tau,\theta) \in \cA(\g)\). Let \(0 \neq h \in \g^{\tau,-\theta}\) be such that \(\g = \g_{-1}(h) \oplus \g_0(h) \oplus \g_1(h)\). Endow \(V := \g_1(h)\) with the structure of a simple euclidean Jordan algebra as in {\rm Example \ref{ex:KKT}}. If \(\g^\tau \cong \so(1,k) \oplus \so(1,n-k)\) for \(1 < k < n-1\), then either \(V^{-\tau} \cong \R \times \R^{n-k}\), or \(V^{-\tau} \cong \R \times \R^k\).
\end{prop}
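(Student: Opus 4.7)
The strategy is to apply Theorem \ref{thm:inv-hermlie-jordan}(b) to recognise $V^{-\tau}$ as a simple rank-$2$ euclidean Jordan algebra, and then to pin down its dimension by a short computation in the standard linear model of $\so(2,n)$ combined with a dimension count.

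The Cayley-type involution of $\g = \so(2,n)$ has fixed point algebra $\so(1,n-1) \oplus \R$, so the hypothesis $\g^\tau \cong \so(1,k) \oplus \so(1,n-k)$ with $1 < k < n-1$ excludes the Cayley case; hence $\tau$ is of non-Cayley type. Each factor $\so(1,m)$ has real rank $1$, so $\rk_\R \g^\tau = 2 = \rk_\R \g$, and Theorem \ref{thm:inv-hermlie-jordan}(b) yields that $V^{-\tau}$ is a non-trivial simple euclidean Jordan algebra of rank $2$. By the classification of rank-$2$ simple euclidean Jordan algebras recalled in Example \ref{ex:inv-jordan-minkowski}, $V^{-\tau} \cong \R \times \R^m$ for some $0 \leq m \leq n-1$, and it suffices to show that $m \in \{k, n-k\}$.

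To determine $m$, realise $\g = \so(E)$ with $E = \R^{2,n}$; then the hyperbolic element $h \in \g^{-\theta}$ acts on $E$ with eigenvalues $+1, -1, 0$, its non-zero eigenspaces being one-dimensional and spanned by a dual null pair $e_+, e_-$ satisfying $(e_+, e_-) = 1$, while $E_0 := \spann(e_+, e_-)^\perp$ is the $0$-eigenspace of signature $(1, n-1)$. Writing $\tau = \Ad(\rho)$ for an orthogonal involution $\rho \in \OO(E)$ and letting $V_\pm \subset E$ be its $\pm 1$-eigenspaces, we have $\g^\tau = \so(V_+) \oplus \so(V_-)$, so the signatures of $V_\pm$ form the multiset $\{(1,k),(1,n-k)\}$. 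The condition $\tau(h) = h$ forces $\rho$ to commute with $h$, hence to preserve $\R e_+$, $\R e_-$ and $E_0$; orthogonality combined with $(e_+, e_-) = 1$ then implies $\rho\, e_\pm = \epsilon\, e_\pm$ for a common sign $\epsilon \in \{\pm 1\}$, and replacing $\rho$ by $-\rho$ (which leaves $\tau$ unchanged) we may take $\epsilon = 1$, so that $\spann(e_+, e_-) \subset V_+$. The remaining choice is whether $V_+$ has signature $(1,k)$ or $(1,n-k)$; setting $E_{0,\pm} := V_\pm \cap E_0$, the two cases yield $(\dim E_{0,+}, \dim E_{0,-}) = (k-1, n-k+1)$ or $(n-k-1, k+1)$, respectively.

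Since $\g_0 = \R h \oplus \so(E_0)$ is $\tau$-stable, $\g_0^\tau = \R h \oplus \so(E_{0,+}) \oplus \so(E_{0,-})$. Because $\theta$ commutes with $\tau$ and swaps $\g_{\pm 1}$, $\dim \g_1^{\pm\tau} = \dim \g_{-1}^{\pm\tau}$, which together with $\dim \g^\tau = \binom{k+1}{2} + \binom{n-k+1}{2}$ yields
\[ \dim V^{-\tau} \;=\; \dim \g_1^{-\tau} \;=\; n - \tfrac{1}{2}\bigl(\dim \g^\tau - \dim \g_0^\tau\bigr). \]
Using $\binom{k+1}{2} - \binom{k-1}{2} = 2k - 1$ (and the analogous identity with $k$ replaced by $n-k$), the two cases give $\dim V^{-\tau} = n-k+1$ and $\dim V^{-\tau} = k+1$ respectively, whence $V^{-\tau} \cong \R \times \R^{n-k}$ or $V^{-\tau} \cong \R \times \R^k$. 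The main subtlety is precisely this two-case bifurcation: for a fixed $h$, there exist involutions $\tau$ having isomorphic fixed-point algebras but placing the null pair $(e_+, e_-)$ in different eigenspaces of $\rho$, and these two geometric configurations are exactly what accounts for the two alternatives in the conclusion.
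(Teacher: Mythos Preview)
Your proof is correct and arrives at the same conclusion, but the route is genuinely different from the paper's.

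The paper argues structurally inside $\g^\tau$: since $h$ induces a 3-grading on $\fh = \g^\tau \cong \so(1,k)\oplus\so(1,n-k)$, it must act nontrivially on at least one simple ideal; if it acted nontrivially on both, then using the classification of 3-gradings of $\so(1,m)$ one would get $\dim V^{-\tau}=2$, which is impossible for a rank-$2$ simple euclidean Jordan algebra. Hence $h$ lies in exactly one ideal, and the two cases $h\in\so(1,k)$ versus $h\in\so(1,n-k)$ yield $V^{-\tau}\cong\R\times\R^{n-k}$ or $\R\times\R^{k}$ respectively.

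Your argument instead works in the defining representation: you pin down the spectrum of $h$ on $E=\R^{2,n}$, realise $\tau=\Ad(\rho)$ for an orthogonal involution, and read off $\dim V^{-\tau}$ by a direct dimension count. The bifurcation in your proof (which $\rho$-eigenspace contains the null pair $e_\pm$) corresponds exactly to the paper's bifurcation (which ideal of $\g^\tau$ contains $h$). Your approach is more concrete and avoids citing the classification of 3-gradings from \cite{KN64}; the paper's is more intrinsic and sidesteps the representation-theoretic facts you use (that the 3-grading element has the stated spectral shape on $E$, and that $\tau$ is induced by an element of $\OO(E)$).

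One small point worth tightening: the assertion that $h$ acts on $E$ with one-dimensional $(\pm 1)$-eigenspaces and that $\tau=\Ad(\rho)$ with $\rho^2=\id_E$ are both true here, but you invoke them without justification. The first follows because any other spectral pattern compatible with $\spec(\ad h)=\{0,\pm 1\}$ would require an isotropic subspace of $E$ of dimension exceeding $\min(2,n)=2$; the second follows from the known description of $\Aut(\so(2,n))$ together with the given shape $\g^\tau\cong\so(1,k)\oplus\so(1,n-k)$ (which rules out $\rho^2=-\id_E$). Neither is a gap, but a sentence for each would make the argument self-contained.
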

\begin{proof}
  From the classification of causal symmetric pairs \cite[Thm.\ 3.2.8]{HO97} we deduce that \(\tau\) is not a Cayley type involution, so that \(V^{-\tau}\) must be simple with \(\rk V^{-\tau} = 2\) by Theorem \ref{thm:inv-hermlie-jordan}.
  
  The element \(h\) induces a 3-grading on \(\g^\tau \cong \so(1,k) \oplus \so(1,n-k)\). Suppose that \(h\) induces a 3-grading on both simple ideals of \(\g^\tau\).
  According to \cite[p.\ 893f.]{KN64}, we would then have \(\g^\tau_1(h) = V^\tau \cong \R^{k-1} \oplus \R^{n-k-1} = \R^{n-2}\), so that \(V^{-\tau} \cong \R \times \R\) is a two-dimensional euclidean simple Jordan algebra of rank \(2\). But this leads to a contradiction because such a Jordan algebra does not exist.

  Hence, \(h\) commutes with one of the simple ideals of \(\g^\tau\), so that it is contained in one of them. We identify \(\g^\tau\) with the above direct sum.
  If \(h \in \so(1,k)\), then \(\g^\tau_1(h) = V^\tau \cong \R^{k-1}\), so that \(V^{-\tau} \cong \R \times \R^{(n-k+1)-1}\) (cf.\ \cite[Table 1.5.1]{BH98}).

  On the other hand, if \(h \in \so(1,n-k)\), then we have \(\g^\tau_1(h) = V^\tau \cong \R^{n-k-1}\) and therefore \(V^{-\tau} \cong \R \times \R^{(k+1)-1}\) (cf.\ \cite[p.\ 893f.]{KN64}).
\end{proof}

Recall from Section \ref{sec:inv-jordan} that equivalence classes of involutions on simple euclidean Jordan algebras can be divided into split and non-split involutions, and split involutions further divide into Peirce reflections and non-trivial involutions with a simple fixed point algebra. A crucial step in the proof of Theorem \ref{thm:herm-invclass} is to determine for each involution \(\tau \in \Aut(\g)\) of a hermitian simple Lie algebra \(\g\) of tube type the type of the Jordan algebra involution \(-\tau\) on the corresponding simple Jordan algebra. That such a Jordan algebra always exists is a consequence of the following proposition:

\begin{prop}
  \label{prop:inv-herm-basis}
  There exists a \(\tau\)-invariant maximal abelian subspace \(\fa \subset \fp\) with the following properties:
  \begin{enumerate}
    \item Either \(\fa_\fh := \fa \cap \g^\tau = \fa\) or \(r = 2s\) is even and there exists a basis \(\{H_1,\ldots,H_r\}\) of \(\fa\) with the following properties:
      \begin{itemize}
        \item \(\tau(H_\ell) = H_{\ell+s}\) for \(1 \leq \ell \leq s\),
        \item the vectors \(K_\ell := H_\ell + H_{\ell + s}\) form a basis of \(\fa \cap \g^\tau\), and
        \item if we denote by \(\{\varepsilon_1,\ldots,\varepsilon_r\} \subset \fa^*\) the dual basis of \(\{H_1,\ldots,H_r\}\), then the restricted root system \(\Sigma \subset \fa^*\) is of type \((C_r)\) as in \eqref{eq:tube-type} if \(\g\) is of tube type and of type \((BC_r)\) as in \eqref{eq:non-tube-type} if \(\g\) is of non-tube type.
      \end{itemize}
    \item Let \(h_0 \in \fa_\fh\) be such that \(\spec(\ad h_0) \subset \{0, \pm\frac{1}{2},\pm 1\}\) and
      \[\g_t(h_0) := \g_{-1}(h_0) \oplus [\g_{-1}(h_0),\g_1(h_0)] \oplus \g_1(h_0)\]
      satisfies \(\rk_\R \g_t(h_0) = \rk_\R \g\). Then \(\fa_\fh \subset [\g_{-1}^{-\tau}(h_0),\g_1^{-\tau}(h_0)]\).
  \end{enumerate}
\end{prop}
\begin{proof}
  The existence of a maximal abelian subspace \(\fa \subset \fp\) with the properties in (a) follows from the construction in \cite[Sec.\ 4]{O91} and in particular \cite[Lem.\ 4.3]{O91} as well as the classification of irreducible symmetric pairs of hermitian type (cf.\ e.g.\ \cite[p. 208]{O91}). The arguments in \cite[Sec.\ 4]{O91} also show that there exists \(X_{\tilde \ell}^\pm \in \g^{\pm 2\varepsilon_{\tilde \ell}}\) such that \(H_{\tilde \ell} = [X_{\tilde \ell}^+, X_{\tilde \ell}^-]\) for \(1 \leq \tilde \ell \leq r\) and we have
  \begin{equation}
    \label{eq:prop-inv-herm-basis}
    \tau(X_\ell^\pm) = -X_\ell^\pm ~\text{ if } ~ \tau(H_\ell) = H_\ell \quad \text{and} \quad \tau(X_\ell^\pm) = -X_{\ell + s}^\pm ~\text{ if } ~ \tau(H_\ell) = H_{\ell + s}
  \end{equation}
  for \(1 \leq \ell \leq r\) if \(\fa_\fh = \fa\), respectively \(1 \leq \ell \leq s\) if \(\dim \fa_\fh = s = \frac{r}{2}\).
  
  Suppose that \(h_0 \in \fa_\fh\) has the properties stated in (b).
  If \(\fa_\fh = \fa\), then \(h_0\) is of the form \(h_0 = \frac{1}{2}\sum_{\ell=1}^r \alpha_\ell H_\ell\) with \(\alpha_\ell \in \{\pm 1\}\), and we have \(\tau(H_\ell) = H_\ell\) for \(1 \leq \ell \leq r\).
  Now \eqref{eq:prop-inv-herm-basis} shows that \(H_\ell \in [\g_{-1}^{-\tau}(h_0),\g_1^{-\tau}(h_0)]\)

  If \(r = 2s\) and \(\rk_\R \g^\tau = s\), then \(h_0\) must be of the form \(h_0 = \sum_{\ell=1}^s \alpha_\ell K_\ell\) with \(\alpha \in \{-1,1\}\).
  Equation \eqref{eq:prop-inv-herm-basis} then shows that
  \[\tau(X_\ell^\pm + X_{\ell + s}^\pm) = -(X_\ell^\pm + X_{\ell + s}^\pm)\]
  and therefore \(K_\ell \in [\g_{-1}^{-\tau}(h_0),\g_1^{-\tau}(h_0)]\) for \(1 \leq \ell \leq s\), which proves (b).
\end{proof}

\begin{cor}
  \label{cor:invclass-5grad-inclusion}
  There exists an element \(h \in \g^{\tau,-\theta}\) such that \(h\) is hyperbolic with \(\spec(\ad h) \subset \{0, \pm\frac{1}{2}, \pm 1\}\) and
  \[\g_t(h) = \g_{-1}(h) \oplus [\g_{-1}(h), \g_1(h)] \oplus \g_1(h)\]
  is a hermitian simple Lie algebra of tube type with \(\rk_\R \g = \rk_\R \g_t(h)\).
  If \(\g\) is of tube type, then \(\g = \g_t(h)\).
  Moreover, for every hyperbolic \(h_0 \in \g^{\tau,-\theta}\) with \(\spec(\ad h_0) = \{0, \pm\frac{1}{2}, \pm 1\}\), there exists an element \(h \in \g^{\tau,-\theta}\) with the above properties and
  \[\g_{\pm 1}(h_0) \subset \g_{\pm 1}(h),\quad [h,h_0] = 0,\quad \text{and}\quad h,h_0 \in [\g_{-1}^{-\tau}(h), \g_1^{-\tau}(h)] \subset \g_t(h).\]
\end{cor}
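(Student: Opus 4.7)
The plan is to produce $h$ by a direct construction based on Proposition \ref{prop:inv-herm-basis}, handling both parts of the corollary in a unified way. For the second part, I would first extend $\R h_0$ to a maximal abelian subspace $\fa_\fh \subset \g^\tau \cap \fp$ (possible since $h_0 \in \g^\tau \cap \fp$ is hyperbolic); for the first part, any maximal abelian $\fa_\fh \subset \g^\tau \cap \fp$ suffices. Applying Proposition \ref{prop:inv-herm-basis}(a) yields a maximal abelian subspace $\fa \subset \fp$ with $\fa_\fh \subset \fa$, together with orthogonal $(H_1)$-homomorphisms $\kappa_1, \ldots, \kappa_r : (\fsl(2,\R), \tfrac{1}{2}U) \to (\g, H_0)$ whose images $H_j := \kappa_j(H)$ form a basis of $\fa$, such that in the dual coordinates $\{\varepsilon_1, \ldots, \varepsilon_r\}$ the restricted root system $\Sigma \subset \fa^*$ is of type $(C_r)$ or $(BC_r)$. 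Let $s := \dim \fa_\fh$ and write $\{K_\ell\}_{\ell=1}^s$ for the basis of $\fa_\fh$ given by the proposition ($K_\ell = H_\ell$ in the first alternative, $K_\ell = H_\ell + H_{\ell+s}$ in the second).

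Next, for the second part, I would expand $h_0 = \sum_\ell c_\ell K_\ell$ and analyze the admissible values of the coefficients. Evaluating the roots $\pm 2\varepsilon_k$ (and, in the non-tube case, also $\pm \varepsilon_k$) shows $c_\ell \in \{0, \pm \tfrac{1}{4}, \pm \tfrac{1}{2}\}$, and combining this with the evaluation of the roots $\varepsilon_i \pm \varepsilon_j$ together with the requirement that the eigenvalues $\pm 1$ actually occur forces $c_\ell \in \{0, \pm \tfrac{1}{2}\}$. Choosing $\epsilon_\ell \in \{+1, -1\}$ to agree with the sign of $c_\ell$ whenever $c_\ell \neq 0$ (and $\epsilon_\ell = +1$ otherwise), I then set
\[
  h := \frac{1}{2}\sum_{\ell=1}^s \epsilon_\ell K_\ell \in \fa_\fh \subset \g^{\tau,-\theta}.
\]
For the first part of the corollary one simply takes $\epsilon_\ell = +1$ throughout. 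In either case $[h, h_0] = 0$ because $\fa_\fh$ is abelian, and a root-by-root evaluation yields $\alpha(h) \in \{0, \pm \tfrac{1}{2}, \pm 1\}$ for every $\alpha \in \Sigma$, establishing the required spectral bound.

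To establish that $\g_t(h)$ is hermitian simple of tube type with $\rk_\R \g_t(h) = r$, I would apply Lemma \ref{lem:tube-type-emb-lower-rank} after checking $\g_{\pm 1}(h) \neq \{0\}$ and $\fa \subset \g_t(h)$. The orthogonality of the $\kappa_j$ implies that $\kappa_\ell(X)$ is an $\ad(h)$-eigenvector with eigenvalue $\epsilon_{\ell \wedge s}$ (where $\ell \wedge s := \ell$ if $\ell \leq s$ and $\ell - s$ otherwise), so $\kappa_\ell(X), \kappa_\ell(Y) \in \g_{\pm 1}(h)$ for every $\ell \in \{1, \ldots, r\}$. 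Consequently $H_\ell = \pm [\kappa_\ell(X), \kappa_\ell(Y)] \in [\g_{-1}(h), \g_1(h)] \subset \g_t(h)$, giving $\fa \subset \g_t(h)$ and $\rk_\R \g_t(h) = r$. If $\g$ is of tube type, $\Sigma$ is of type $(C_r)$ with no short roots $\pm \varepsilon_k$, so every $\alpha \in \Sigma$ takes values in $\{0, \pm 1\}$ on $h$, forcing $\g_{\pm 1/2}(h) = \{0\}$ and hence $\g = \g_t(h)$.

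Finally, I would verify the compatibility with $h_0$. For the inclusion $\g_{\pm 1}(h_0) \subset \g_{\pm 1}(h)$, running through the root types in $\Sigma$ case by case, one sees that, with $c_\ell \in \{0, \pm \tfrac{1}{2}\}$, an equation $\alpha(h_0) = \pm 1$ forces the contributing coefficients to equal $\pm \tfrac{1}{2}$ with matching signs, so by construction $\epsilon_\ell$ inherits those signs and $\alpha(h) = \alpha(h_0)$. Lastly, $h$ itself satisfies the hypotheses of Proposition \ref{prop:inv-herm-basis}(b), which, applied with $h$ in place of the element denoted $h_0$ there, yields $\fa_\fh \subset [\g_{-1}^{-\tau}(h), \g_1^{-\tau}(h)]$; since $h, h_0 \in \fa_\fh$, both belong to this commutator space. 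The hard part will be the sign bookkeeping in checking $\g_{\pm 1}(h_0) \subset \g_{\pm 1}(h)$, but the restriction $c_\ell \in \{0, \pm \tfrac{1}{2}\}$ reduces this to a short finite case analysis.
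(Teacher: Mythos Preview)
Your proposal is correct and follows essentially the same route as the paper: choose a maximal abelian $\fa_\fh \subset \g^\tau \cap \fp$ containing $h_0$, use the basis $\{K_\ell\}$ supplied by Proposition~\ref{prop:inv-herm-basis}(a), define $h$ by replacing the vanishing coefficients of $h_0$ by $\tfrac{1}{2}$ and keeping the signs of the nonzero ones, and finish with Proposition~\ref{prop:inv-herm-basis}(b). The paper's proof is terser---it asserts $\lambda_\ell \in \{0,\pm\tfrac{1}{2}\}$ without the case analysis you sketch, and does not spell out why $\rk_\R \g_t(h) = r$ or why $\g_t(h) = \g$ in the tube case---so your added detail (the elimination of $\pm\tfrac{1}{4}$ via the roots $\varepsilon_i \pm \varepsilon_j$ and the occurrence of the eigenvalue $\pm 1$, and the explicit verification $\fa \subset \g_t(h)$ via the $\kappa_\ell$) is welcome and fills genuine gaps in the exposition.
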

\begin{proof}
  Let \(\fa_\fh\) be the maximal abelian subspace of \(\g^\tau \cap \fp\) from Proposition \ref{prop:inv-herm-basis} and set \(s := \dim \fa_\fh\).
  If \(\fa_\fh\) is maximal abelian in \(\fp\), then the restricted root system \(\Sigma \subset \fa^*\) is either of type \((C_r)\) as in \eqref{eq:tube-type} or of type \((BC_r)\) as in \eqref{eq:non-tube-type}. Let \(K_\ell\) be the coroot of \(2\varepsilon_\ell\) for \(1 \leq \ell \leq s\).
  If \(\fa_\fh\) is not maximal abelian in \(\fp\), then we denote by \(K_\ell, 1 \leq \ell \leq s,\) the basis of \(\fa_\fh\) constructed in Proposition \ref{prop:inv-herm-basis}.
  Then the element \(h := \frac{1}{2}\sum_{\ell=1}^s K_\ell \in \g^\tau\) satisfies the above properties.

  Let now \(h_0 \in \g^{\tau,-\theta}\) be hyperbolic with \(\spec(\ad h_0) = \{0, \pm\frac{1}{2}, \pm 1\}\). By applying an inner automorphism in \(e^{\ad \g^{\tau, \theta}}\) to \(h_0\), we may assume that \(h_0 \in \fa_\fh\). The condition on the spectrum of \(\ad(h_0)\) then implies that, after renumbering \(K_1,\ldots,K_\ell\) if necessary, it must be of the form \(h_0 = \sum_{\ell=1}^s \lambda_\ell K_\ell\) with \(\lambda_\ell \in \{0, \pm \frac{1}{2}\}\). Hence, the element
  \[h := \sum_{\ell=1}^s \mu_\ell K_\ell \quad \text{with} \quad \mu_\ell := \begin{cases} \lambda_\ell & \text{if } \lambda_\ell \neq 0, \\ \frac{1}{2} & \text{if } \lambda_\ell = 0,\end{cases} \, (1 \leq \ell \leq s),\]
  has the above properties and \(\g_{\pm 1}(h_0) \subset \g_{\pm 1}(h)\).

  The remaining properties follow from \(h,h_0 \in \fa_\fh\) and Proposition \ref{prop:inv-herm-basis}(b).
\end{proof}

\begin{rem}
  \label{rem:jordan-inv-herm-rk2}
  Let \(\g = \so(2,n)\) and \((\tau,\theta) \in \cA(\g)\) be as in Proposition \ref{prop:jordan-inv-herm-rk2} and identify \(\g^\tau\) with \(\so(1,k) \oplus \so(1,n-k)\), where \(1 < k < n-1\).
  For each of the two proper simple ideals \(I\) in \(\g^\tau\), there exists an element \(h_0 \in I \cap \fp\) such that \(\ad h_0\) induces a 3-grading on \(I\) (cf. \cite{KN64}). In particular, \(h_0\) is a hyperbolic element in \(\g\).

  A combinatorial argument using the root system \eqref{eq:tube-type} of \(\so(2,n)\) shows that \(\spec(\ad_\g h_0) \subset \{0, \pm \frac{1}{2}, \pm 1\}\).
  Thus, by Corollary \ref{cor:invclass-5grad-inclusion}, there exists \(h \in \g^{\tau,-\theta}\) such that \(\g = \g_{-1}(h) \oplus \g_0(h) \oplus \g_1(h)\) and \(\g_1(h_0) \subset \g_1(h)\).
    In particular, the proof of Proposition \ref{prop:jordan-inv-herm-rk2} shows that \(h\) must be contained in \(I\) because \(\g_1(h) \cap I \neq \{0\}\).
    Hence, both cases \(V^{-\tau} \cong \R \times \R^{n-k}\) and \(V^{-\tau} \cong \R \times \R^k\) in Proposition \ref{prop:jordan-inv-herm-rk2} can occur.
\end{rem}

\subsection{Involutions on hermitian simple Lie algebras of non-tube type}

In this section, we assume that \(\g\) is of non-tube type and let \((\tau,\theta) \in \cA(\g)\).
According to Corollary \ref{cor:invclass-5grad-inclusion}, there exists a hyperbolic element \(h \in \g^{\tau,-\theta}\) such that \(\g_t(h) := \g_{-1}(h) \oplus [\g_{-1}(h), \g_1(h)] \oplus \g_1(h)\) is a hermitian simple subalgebra of tube type with \(\rk_\R \g = \rk_\R \g_t(h)\) and \(\spec(\ad h) = \{0, \pm\frac{1}{2},\pm 1\}\).
Since the involution \(\tau\) commutes with \(\ad h\), it restricts to an involutive automorphism on \(\g_t(h)\) with \(\tau(\Wmin(\g_t(h))) = -\Wmin(\g_t(h))\).
In this section, we want to determine \(\g_t(h)^\tau\) from \(\g^\tau\).
To this end, we first observe that
\[\g_t(h)^\tau = \g^\tau \cap \g_t(h) = \g^\tau_{-1}(h) \oplus [\g^\tau_{-1}(h),\g^\tau_1(h)] \oplus \g^\tau_1(h).\]
From \cite[Thm.\ 3.2.8]{HO97}, we see that \(\g^\tau \neq \g_t(h)^\tau\) and that \(\g^\tau\) is semisimple. Hence, the element \(h\) must induce a 5-grading on \(\g^\tau\).
The subalgebra \(\g_t(h)^\tau\) is an ideal of \(\g^\tau_{-1}(h) \oplus \g^\tau_0(h) \oplus \g^\tau_1(h)\). As a result, we can determine \(\g_t(h)^\tau\) by using the classification of 5-gradings and the corresponding 3-graded subalgebras for simple Lie algebras in \cite{Kan93}.

\begin{example}
  \label{ex:inv-e6(-14)}
  Consider the Lie algebra \(\g := \fe_{6(-14)}\). This Lie algebra is hermitian simple and of non-tube type.
  Up to equivalence, there exist two different involutions \(\tau \in \Aut(\g)\) with \(\tau(\Wmin) = -\Wmin\).

  The first equivalence class is characterized by \(\g^\tau \cong \sp(2,2) := \fu(2,2,\H)\). Fix a Cartan involution \(\theta\) of \(\g\) such that \(\theta\tau = \tau\theta\) and let \(h \in \g^{\tau,-\theta}\) be as in Corollary \ref{cor:invclass-5grad-inclusion}. Then \(\g_t(h) \cong \so(2,8)\), so that \(\g_t(h)^\tau\) must be isomorphic to \(\so(1,k) \oplus \so(1,8-k)\) for some \(0 \leq k \leq 8\) by \cite[Thm.\ 3.2.8]{HO97}.
  Up to isomorphy, there exists a unique 5-grading of \(\sp(2,2)\) with
  \[\sp(2,2)_{-1} \oplus \sp(2,2)_0 \oplus \sp(2,2)_1 \cong \sp(1,1) \oplus \sp(1,1) \cong \so(1,4) \oplus \so(1,4)\]
  (cf.\ \cite[Table II]{Kan93}), so that \(\g_t(h)^\tau \cong \so(1,4) \oplus \so(1,4)\).

  The second equivalence class is characterized by \(\g^\tau \cong \ff_{4(-20)}\), which is an exceptional simple Lie algebra of real rank \(1\). It has an up to isomorphy unique 5-grading with
  \[(\ff_{4(-20)})_{-1} \oplus (\ff_{4(-20)})_0 \oplus (\ff_{4(-20)})_1 \cong \so(1,8),\]
  so that \(\g_t(h)^\tau \cong \so(1,8)\).
\end{example}

By applying the same procedure as in Example \ref{ex:inv-e6(-14)} to general non-tube type hermitian simple Lie algebras, we obtain:

\begin{prop}
  \label{prop:nontube-inv-restr}
  Let \(h \in \g^\tau\) be as in {\rm Corollary \ref{cor:invclass-5grad-inclusion}}. Then the restriction of \(\tau\) to the hermitian simple Lie algebra \(\g_t(h)\) is of the following type:
  \begin{table}[H]
    \centering
    \begin{tabular}{|c|c|c|c|c|}
      \hline
      \(\g\) & \(\g_t(h)\) & \(\g^\tau\) & \(\g_t(h)^\tau\) & \(p,q\)\\
      \hline
      \(\su(p,q)\) & \(\su(q,q)\) & \(\so(p,q)\) & \(\so(q,q)\) & \(p > q > 0\) \\
      \(\su(2p,2q)\) & \(\su(2q,2q)\) & \(\sp(p,q)\) & \(\sp(q,q)\) & \(p > q > 0\) \\
      \hline
      \(\so^*(4p+2)\) & \(\so^*(4p)\) & \(\so(2p+1,\C)\) & \(\so(2p,\C)\) & \(p > 1\) \\
      \hline
      \(\fe_{6(-14)}\) & \(\so(2,8)\) & \(\sp(2,2)\) & \(\so(1,4) \oplus \so(1,4)\) & \\
      \(\fe_{6(-14)}\) & \(\so(2,8)\) & \(\ff_{4(-20)}\) & \(\so(1,8)\) & \\
      \hline
    \end{tabular}
    \caption{Restrictions of \(\tau \in \Aut(\g)\) to the tube type subalgebra \(\g_t(h)\) with \(\rk_\R \g = \rk_\R \g_t(h)\).}
    \label{table:nontube-inv-restr}
  \end{table}
\end{prop}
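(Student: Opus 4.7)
The plan is to reduce the computation of $\g_t(h)^\tau$ to a combination of two known classifications: Berger's list of compactly causal symmetric pairs in \cite[Thm.\ 3.2.8]{HO97}, and Kaneyuki's classification of 5-gradings of simple real Lie algebras in \cite{Kan93}. First I would verify the structural claim that $\g_t(h)$ is $\tau$-invariant. Since $h \in \fh = \g^\tau$, the endomorphisms $\tau$ and $\ad h$ commute, so each eigenspace $\g_\lambda(h)$ is $\tau$-stable and hence so is the subalgebra generated by $\g_{\pm 1}(h)$. Remark \ref{rem:tube-type-h-element}(d) together with Lemma \ref{lem:assoc-wedge-simple} shows that $\tau\vert_{\g_t(h)}$ flips $\Wmin(\g_t(h))$, so $(\g_t(h), \tau\vert_{\g_t(h)})$ is itself a causal symmetric pair of tube type and the fixed-point subalgebra is $\g_t(h)^{\tau\vert_{\g_t(h)}} = \fh \cap \g_t(h)$.

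Next I would identify $\fh \cap \g_t(h)$ with the tube-type part $\fh_t(h) := \fh_{-1}(h) \oplus [\fh_{-1}(h), \fh_1(h)] \oplus \fh_1(h)$ of the 5-grading of $\fh$ induced by $\ad h$. The inclusion $\fh_t(h) \subseteq \fh \cap \g_t(h)$ is immediate from $\fh \cap \g_{\pm 1}(h) = \fh_{\pm 1}(h) \subseteq \g_t(h)$. For the reverse inclusion, decompose $[\g_{-1}(h), \g_1(h)]$ according to the $\tau$-eigenspaces and observe that the cross terms $[\fh_{-1}, \fq_1]$ and $[\fq_{-1}, \fh_1]$ land in $\fq$, while $[\fq_{-1}(h), \fq_1(h)] \subseteq \fh$ can be absorbed into $[\fh_{-1}(h), \fh_1(h)]$ using the Jordan-algebra structure on $\g_1(h)$ from Example \ref{ex:KKT} and the root-space description of Remark \ref{rem:herm-frame-roots}.

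The remaining task is a case-by-case analysis. For each non-tube type hermitian simple Lie algebra $\g$, the subalgebra $\g_t(h)$ is read off from Remark \ref{rem:tube-type-h-element}(c) and Table \ref{table:rem:non-tube-emb}, and the possible fixed-point subalgebras $\fh = \g^\tau$ are read off from Berger's list \cite[Thm.\ 3.2.8]{HO97}. The element $h$, chosen via Corollary \ref{cor:invclass-5grad-inclusion}, induces a 5-grading on $\fh$, and Kaneyuki's classification \cite[Table II]{Kan93} determines the tube-type subalgebra $\fh_t(h)$ of $\fh$ from the isomorphism type of $\fh$. Example \ref{ex:inv-e6(-14)} already executes this for $\g = \fe_{6(-14)}$: the two equivalence classes of $\tau$ (with $\fh \cong \sp(2,2)$ or $\fh \cong \ff_{4(-20)}$) yield $\so(1,4) \oplus \so(1,4)$ and $\so(1,8)$ respectively. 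The classical cases $\g = \su(p,q)$, $\su(2p,2q)$, $\so^*(4p+2)$ are handled analogously, producing the remaining entries of Table \ref{table:nontube-inv-restr}.

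The main obstacle is the matching step in the last paragraph: one must verify that the 5-grading on $\fh$ induced by the specific $h$ from Corollary \ref{cor:invclass-5grad-inclusion} is the one in Kaneyuki's table whose tube-type subalgebra has the claimed isomorphism type. In the classical cases this is done by explicitly tracing the restricted root system of $\g$ (of type $(BC_r)$ by non-tubeness) through the involution $\tau$ and identifying the restricted root system of $\fh$; in the exceptional case $\fe_{6(-14)}$ the uniqueness of the 5-grading on each of $\sp(2,2)$ and $\ff_{4(-20)}$ (up to isomorphism) makes the verification essentially automatic.
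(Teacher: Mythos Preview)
Your approach is essentially the paper's own: the proof in the paper consists of the paragraph preceding Example~\ref{ex:inv-e6(-14)} together with the phrase ``By applying the same procedure as in Example~\ref{ex:inv-e6(-14)}\ldots'', i.e.\ one observes that $\g_t(h)^\tau = \fh \cap \g_t(h) = \fh_{-1}(h) \oplus [\fh_{-1}(h),\fh_1(h)] \oplus \fh_1(h)$, notes that $h$ induces a $5$-grading on the semisimple algebra $\fh$, and reads off the result from Kaneyuki's tables \cite{Kan93}, exactly as you describe.

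One point deserves a comment. Your ``absorption'' step --- that $[\fq_{-1}(h),\fq_1(h)] \subseteq [\fh_{-1}(h),\fh_1(h)]$ --- is not actually established by the gesture toward Example~\ref{ex:KKT} and Remark~\ref{rem:herm-frame-roots}; those references describe the Jordan structure and root spaces but do not by themselves force this inclusion. The paper simply asserts the displayed equality without proof. A cleaner route (which the paper also indicates) is to observe that $\g_t(h)^\tau$ is an ideal of $\fh_{\rm ev} := \fh_{-1}(h)\oplus\fh_0(h)\oplus\fh_1(h)$ whose $\pm 1$-parts are exactly $\fh_{\pm 1}(h)$; in each case of the table $\fh_{\rm ev}$ has a unique such ideal, which one reads off from \cite{Kan93}, and the answer can be cross-checked against the list of compactly causal pairs for $\g_t(h)$. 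This sidesteps the need to prove the absorption inclusion directly.
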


The following lemma will be used to reduce the construction of the subalgebras in Table \ref{table:herm-invclass} to the case where \(\g\) is of tube type:

\begin{lem}
  \label{lem:invclass-non-tube-type-5grad-red}
  Let \(h \in \fp\) be such that \(\g = \g_{-1}(h) \oplus \g_{-\frac{1}{2}}(h) \oplus \g_0(h) \oplus \g_{\frac{1}{2}}(h) \oplus \g_1(h)\) and \(\rk_\R \g_t(h) = \rk_\R \g = r\). Moreover, let \(h_0 \in \g_t(h)\) such that \([h,h_0] = 0\) and \(\spec(\ad_{\g_t(h)} h_0) = \{0, \pm \frac{1}{2}, \pm 1\}\). Then \(\spec(\ad_\g h_0) = \{0, \pm\frac{1}{2}, \pm 1\}\) and \(\g_t(h_0) \subset \g_t(h)\).
\end{lem}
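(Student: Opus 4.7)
The approach is to place both $h$ and $h_0$ into a common Cartan subspace of $\fp$ where the restricted root system of type $(BC_r)$ is explicit, and then to translate the spectral hypothesis on $h_0$ into arithmetic constraints on its coordinates.

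Since $h_0$ is hyperbolic in $\g_t(h)$, conjugating by an element of $\fz_{\g_t(h)}(h)$ (which preserves $h$, $\g_t(h)$, and the spectrum of $\ad h_0$ on $\g_t(h)$) I may assume $h_0$ lies in a maximal abelian subspace $\fa \subset \g_t(h) \cap \fp$ containing $h$. The rank equality $\rk_\R \g_t(h) = r = \rk_\R \g$ forces $\fa$ to be maximal abelian in $\fp$ as well. By Remark \ref{rem:tube-type-h-element}, $\fa$ admits coroots $H_1, \dots, H_r$ of $2\varepsilon_1, \dots, 2\varepsilon_r$ for which the restricted root system $\Sigma \subset \fa^*$ is of type $(BC_r)$, and after a permutation and sign changes (cf.\ Remark \ref{rem:tube-type-3grad-sign}) I may assume $h = \tfrac{1}{2}\sum_{k=1}^r H_k$, using that $\g_t(h)$ has maximal real rank. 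In this normalization $\g_t(h)$ contains exactly the root spaces of the $(C_r)$-subsystem, while the short-root spaces $\g^{\pm\varepsilon_k}$ lie in $\g_{\pm\frac{1}{2}}(h)$.

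Writing $h_0 = \sum_{k=1}^r \mu_k H_k$, the eigenvalues of $\ad_\g h_0$ on the root spaces are $2\mu_k$ on $\g^{\pm 2\varepsilon_k}$, $\mu_i \pm \mu_j$ on $\g^{\pm(\varepsilon_i \pm \varepsilon_j)}$, and $\mu_k$ on $\g^{\pm\varepsilon_k}$; only the first two sets contribute to $\spec(\ad_{\g_t(h)} h_0)$. From $2\mu_k \in \{0, \pm\tfrac{1}{2}, \pm 1\}$ I obtain $\mu_k \in \{0, \pm\tfrac{1}{4}, \pm\tfrac{1}{2}\}$. The crucial use of the set equality $\spec(\ad_{\g_t(h)} h_0) = \{0, \pm\tfrac{1}{2}, \pm 1\}$ is that $\pm 1$ must actually be attained; this is impossible when all $\mu_k \in \{0, \pm\tfrac{1}{4}\}$, so some $\mu_\ell = \pm\tfrac{1}{2}$ must exist. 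The constraint $\mu_k \pm \mu_\ell \in \{0, \pm\tfrac{1}{2}, \pm 1\}$ then rules out $\mu_k = \pm\tfrac{1}{4}$ for every other $k$, since $\tfrac{1}{2} \pm \tfrac{1}{4} \in \{\tfrac{1}{4}, \tfrac{3}{4}\}$ is not in the allowed set. Hence $\mu_k \in \{0, \pm\tfrac{1}{2}\}$ for all $k$, and therefore $\spec(\ad_\g h_0) \subset \{0, \pm\tfrac{1}{2}, \pm 1\}$.

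For the inclusion $\g_t(h_0) \subset \g_t(h)$, a root $\alpha$ contributes to $\g_{\pm 1}(h_0)$ precisely when $\alpha(h_0) = \pm 1$, and with $\mu_k \in \{0, \pm\tfrac{1}{2}\}$ no short root $\pm\varepsilon_k$ satisfies this. Thus $\g_{\pm 1}(h_0)$ is spanned by root spaces from the $(C_r)$-subsystem and lies in $\g_t(h)$; since $\g_t(h)$ is a subalgebra, it contains $\g_t(h_0)$. The main obstacle is arranging the reduction to $h_0 \in \fa$ compatibly with the standard form $h = \tfrac{1}{2}\sum H_k$; once this is in place, the remainder is a short arithmetic argument that relies essentially on the set-equality formulation of the spectral hypothesis (containment alone would not exclude the case $\mu_k = \pm\tfrac{1}{4}$ for all $k$).
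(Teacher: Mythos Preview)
Your proof is correct and follows the same route as the paper's: place $h$ and $h_0$ in a common maximal abelian $\fa \subset \fp$, normalize $h = \tfrac{1}{2}\sum_k H_k$ using the $(BC_r)$ structure and the maximal-rank hypothesis on $\g_t(h)$, and then read off that $h_0$ has coordinates in $\{0,\pm\tfrac{1}{2}\}$. The paper is terser---it simply cites the $5$-grading characterization of Remark~\ref{rem:tube-type-3grad-sign} for $\g_t(h)$ to get $\lambda_k \in \{0,\pm\tfrac{1}{2}\}$---whereas you unpack this by explicitly eliminating $\mu_k = \pm\tfrac{1}{4}$ via the set-equality of the spectrum; this usefully highlights why the hypothesis is phrased as an equality rather than an inclusion. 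One minor point: you conclude only $\spec(\ad_\g h_0) \subset \{0,\pm\tfrac{1}{2},\pm 1\}$, but the lemma asserts equality; the reverse inclusion is immediate since $\g_t(h)$ is $\ad h_0$-invariant and hence $\spec(\ad_{\g_t(h)} h_0) \subset \spec(\ad_\g h_0)$.
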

\begin{proof}
  Recall from Lemma \ref{lem:tube-type-emb-lower-rank} that \(\g_t(h)\) a hermitian simple Lie algebra of tube type. Choose a maximal abelian subspace \(\fa \subset \g_t(h) \cap \fp\) containing \(h\) and \(h_0\). Then the restricted root system \(\Sigma \subset \fa^*\) of \(\g\) is of type \((BC_r)\) as in \eqref{eq:non-tube-type}. Let \(H_1,\ldots,H_r \in \fa\) be the coroots of \(2\varepsilon_1,\ldots,2\varepsilon_r\) respectively, such that \(h = \frac{1}{2}\sum_{k=1}^r H_k\) (cf.\ Remark \ref{rem:tube-type-h-element}(c)). Since \(h_0\) induces a 5-grading on \(\g_t(h)\), whose root system is of type \((C_r)\) as in \eqref{eq:tube-type}, it must be, after renumbering \(H_1,\ldots,H_r\) if necessary, of the form \(h_0 = \sum_{k=1}^s \lambda_k H_k\) with \(\lambda_k \in \{\pm \frac{1}{2}\}\) for some \(1 \leq s \leq r\). Thus, \(h_0\) also induces a 5-grading on \(\g\), and we have \(\g_t(h_0) \subset \g_t(h)\).
\end{proof}

\subsection{The proof of Theorem \ref{thm:herm-invclass}}

In this section, \(\g\) denotes a hermitian simple Lie algebra.
In order to prove Theorem \ref{thm:herm-invclass}, we first show that Table \ref{table:herm-invclass} contains all possible subalgebras \(\g(\tau,h)\) up to isomorphy:

\begin{prop}
  Let \(\tau \in \Aut(\g)\) be an involutive automorphism with \(\tau(\Wmin) = -\Wmin\) and let \(h_0 \in \g^\tau\). Then \(\g(\tau,h_0)\) is either trivial or isomorphic to one of the Lie algebras in {\rm Table \ref{table:herm-invclass}}.
\end{prop}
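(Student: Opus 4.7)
The plan is to reduce in two stages to the situation already handled by Theorem \ref{thm:invclass-3grad-jordan}, where everything is expressed in Jordan-algebraic terms, and then to perform a finite case analysis driven by Berger's classification of compactly causal symmetric pairs together with the classification of involutive automorphisms of simple euclidean Jordan algebras in \cite[Table 1.5.1]{BH98}.

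The first reduction is to pass from the given $h_0$ to a semisimple element whose adjoint action has restricted spectrum. By Corollary \ref{cor:invclass-5grad-red}, after possibly replacing $h_0$ by a hyperbolic element, we may assume $h_0 \in \g^\tau$ is hyperbolic with $\spec(\ad h_0) \subset \{0, \pm\tfrac12, \pm 1\}$. If $\g_{\pm 1}(h_0) = \{0\}$ then $\g(\tau,h_0) = \{0\}$. Otherwise Lemma \ref{lem:tube-type-emb-lower-rank} tells us that $\g_t(h_0)$ is hermitian simple of tube type, and the argument of Lemma \ref{lem:invclass-nontubetype-red} (which only uses the $\pm 1$-eigenspaces of $\ad h_0$) yields $\g(\tau,h_0) = \g_t(h_0)(\tau|_{\g_t(h_0)},h_0)$. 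Since $\tau$ commutes with $\ad h_0$, it restricts to an involution on $\g_t(h_0)$ flipping $\Wmin(\g_t(h_0))$, reducing us to the tube type case with a $3$-grading induced by $h_0$ on $\g_t(h_0)$.

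The second reduction is the Jordan algebra translation. Choose a Cartan involution $\theta$ on $\g_t(h_0)$ commuting with $\tau|_{\g_t(h_0)}$ and satisfying $\theta(h_0) = -h_0$, so that the hypotheses of Theorem \ref{thm:invclass-3grad-jordan} hold. Endowing $V := (\g_t(h_0))_1(h_0)$ with its euclidean simple Jordan algebra structure from Example \ref{ex:KKT} and setting $\sigma := -\tau|_V \in \Aut(V)$ (Proposition \ref{prop:invclass-jordan-inv}), Theorem \ref{thm:invclass-3grad-jordan} identifies $\g(\tau,h_0)$ with the Kantor--Koecher--Tits algebra attached to the Jordan subalgebra $V^\sigma$. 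By Proposition \ref{prop:class-inv-jordan}, $V^\sigma$ is either simple or a direct sum $V^{(k)} \oplus V^{(\ell)}$ of two principal subalgebras, so $\g(\tau,h_0)$ is accordingly either hermitian simple of tube type or a direct sum of two such algebras.

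It remains to enumerate, for each equivalence class of $(\g,\tau)$, the isomorphism types that arise. Theorem \ref{thm:inv-hermlie-jordan} matches the three types of involutions $\tau$ (Cayley, non-Cayley equal-rank, half-rank) to the three types of Jordan involutions $\sigma$ (Peirce reflection, split with simple fixed points, non-split), so \cite[Table 1.5.1]{BH98} identifies $V^\sigma$ up to isomorphism in every case, and Proposition \ref{prop:jordan-simple-subalg} restricts its simple summands to principal subalgebras $V^{(k)}$; applying KKT to these via the dictionary on p.\ 213 of \cite{FK94} produces exactly the entries in Table \ref{table:herm-invclass}, with the displayed constraints on $k,\ell$ coming from $k+\ell \le \rk V^\sigma$ (or $2k \le \rk V$ for non-split $\sigma$). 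For non-tube type $\g$, Proposition \ref{prop:nontube-inv-restr} reads off $\g_t(h_0)^{\tau|_{\g_t(h_0)}}$ from $\g^\tau$ and reduces us to the tube type analysis on $\g_t(h_0)$. The main obstacles are the exceptional Jordan algebra $\Herm(3,\bbO)$, which forces the sporadic rows $\g(\tau,h_0) \in \{\so(2,10), \so^*(8), \so^*(12)\}$ computed as in Example \ref{ex:jordan-simple-subalg-hermoctonions}, and the rank-two case $V \cong \cM^n$ for $\g = \so(2,n)$, where Proposition \ref{prop:jordan-inv-herm-rk2} and Remark \ref{rem:jordan-inv-herm-rk2} show that for a single $(\g,\tau)$ with $\g^\tau = \so(q,1) \oplus \so(1,p-q)$ several non-conjugate hyperbolic $h_0$ yield different values of the integer $k$ in the table, and one must carefully track which values actually occur.
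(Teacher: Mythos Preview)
Your overall strategy---reduce to a hyperbolic element with restricted spectrum, pass to a tube-type subalgebra, translate to Jordan involutions via Theorem~\ref{thm:invclass-3grad-jordan}, and enumerate via \cite[Table~1.5.1]{BH98}---is the right one, and coincides with the paper's. However, there is a genuine gap in how you link the Jordan involution back to the given data $(\g,\g^\tau)$.

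You pass directly to $\g_t(h_0)$ and then invoke Theorem~\ref{thm:inv-hermlie-jordan} and Proposition~\ref{prop:nontube-inv-restr} to determine the type of $\sigma = -\tau|_V$ on $V = \g_1(h_0)$. But both of these results are stated for an element $h$ with $\rk_\R \g_t(h) = \rk_\R \g$ (in the tube case, $h$ induces a $3$-grading on all of $\g$; in the non-tube case, $h$ is the specific element furnished by Corollary~\ref{cor:invclass-5grad-inclusion}). For a general $h_0$ with $\spec(\ad h_0)\subset\{0,\pm\tfrac12,\pm1\}$, the subalgebra $\g_t(h_0)$ may have strictly smaller real rank, and nothing in the paper tells you what $\g_t(h_0)^\tau$ is or whether $\tau|_{\g_t(h_0)}$ is Cayley, equal-rank, or half-rank. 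Without that information you cannot read off $V^\sigma$ from \cite[Table~1.5.1]{BH98}, so you cannot place $\g(\tau,h_0)$ in the correct row of Table~\ref{table:herm-invclass}.

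The paper closes this gap by inserting an auxiliary full-rank element $h$ from Corollary~\ref{cor:invclass-5grad-inclusion} with $\g_{\pm 1}(h_0)\subset\g_{\pm 1}(h)$ and $[h,h_0]=0$, and then proving the key identity
\[
\g(\tau,h_0)=(\g_t(h)(\tau|_{\g_t(h)},h))_t(h_0).
\]
Now Theorem~\ref{thm:inv-hermlie-jordan} and Proposition~\ref{prop:nontube-inv-restr} legitimately apply to $\g_t(h)$, determining $V^{-\tau}$ for the \emph{big} Jordan algebra $V=\g_1(h)$; the passage from $h$ to $h_0$ is then governed by Lemma~\ref{lem:caley-class-easy-case}, yielding $\g_1^{-\tau}(h_0)\cong (V^{-\tau})^{(k)}$, whose KKT algebra is listed in Table~\ref{table:cayley-type-class-simplepart}. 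Your argument needs this intermediate full-rank step. (A minor additional point: your appeal to Proposition~\ref{prop:jordan-simple-subalg} to ``restrict simple summands to principal subalgebras'' is not what that proposition says; when $\sigma$ is not a Peirce reflection, $V^\sigma$ is simple but typically not of the form $V^{(k)}$.)
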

\begin{proof}
  Let \(\theta\) be a Cartan involution of \(\g\) commuting with \(\tau\) and let \(h_0 \in \g^\tau\). In view of Corollary \ref{cor:invclass-5grad-red}, we may assume that \(h_0\) is hyperbolic with \(\spec(\ad h_0) \subset \{0, \pm \frac{1}{2}, \pm 1\}\) and \(\g_{\pm 1}(h_0) \neq \{0\}\). As a consequence of Lemma \ref{lem:invclass-herm-conj-isom}, it suffices to consider the case where \(h_0 \in \g^{\tau, -\theta}\). By Corollary \ref{cor:invclass-5grad-inclusion}, there exists \(h \in \g^{\tau, -\theta}\) such that \(\g_{\pm 1}(h_0) \subset \g_{\pm 1}(h)\). Moreover, \(\g_t(h)\) is a tube type hermitian simple subalgebra of the same real rank as \(\g\) and we have \(h,h_0 \in \g_t(h)\) and \([h,h_0] = 0\).

Endow \(V := \g_1(h)\) with the structure of a simple euclidean Jordan algebra as in Example \ref{ex:KKT}. Then \(\g_1(h_0)\) is a Jordan subalgebra of \(V\) (cf.\ Lemma \ref{lem:caley-class-easy-case}). Thus, we have
\begin{equation}
  \label{eq:invclass-proof-5grad-red}
  \g(\tau,h_0) = (\g(\tau,h))_t(h_0) = (\g_t(h)(\tau\lvert_{\g_t(h)}, h))_t(h_0),
\end{equation}
where the first equality follows from \(\g^{-\tau}_1(h_0) \cap \Wmin(\g) \subset \g_1(h) \cap \Wmin(\g_t(h))\) and the second one from Lemma \ref{lem:invclass-nontubetype-red}.
The restriction \(-\tau\lvert_V\) is an involutive Jordan algebra automorphism by Proposition \ref{prop:invclass-jordan-inv}. We can determine whether \(-\tau\lvert_V\) is split or non-split and whether it is a Peirce reflection using Theorem \ref{thm:inv-hermlie-jordan}, respectively Proposition \ref{prop:jordan-inv-herm-rk2} if \(\g \cong \so(2,n)\) for some \(n \in \N, n \neq 2\). We can then determine \(V^{-\tau}\) up to isomorphy using \cite[Table 1.5.1]{BH98}. If \(\g\) is of non-tube type, then we use Proposition \ref{prop:nontube-inv-restr} first to determine \(\g_t(h)^\tau\) and then apply the steps mentioned before to determine \(V^{-\tau}\).

Recall from Lemma \ref{lem:jordan-herm-cone} that \(\g_1(h) \cap \Wmin(\g_t(h)) \in \{\pm \oline{\Omega_V}\}\), where \(\Omega_V\) denotes the interior of the cone of squares in \(V\). Hence, \(\Wmin(\g) \cap \g_1(h)^{-\tau}\) coincides up to a sign with the cone \(\oline{\Omega_{V^{-\tau}}}\), which generates \(V^{-\tau} = \g_1^{-\tau}(h)\). Moreover, the cone
\[\g_{-1}^{-\tau}(h) \cap -\Wmin(\g_t(h)) = -\theta(\g_1^{-\tau}(h) \cap \Wmin(\g_t(h)))\]
generates \(\g_{-1}^{-\tau}(h)\), so that \(\g_t(h)(\tau,h)\) coincides with the subalgebra generated by \(\g_{\pm 1}^{-\tau}(h)\), which is isomorphic to the Lie algebra that we obtain from \(V^{-\tau}\) via the Kantor--Koecher--Tits construction (cf.\ Remark \ref{rem:tkk-onetoone}). We can determine this Lie algebra up to isomorphy using the table in \cite[p.\ 213]{FK94}.

In order to determine \(\g(\tau,h_0)\), we first notice that \(h_0\) induces either a 3-grading or a 5-grading on \(\g_t(h)(\tau,h)\). In any case, Lemma \ref{lem:caley-class-easy-case} and \(\g_1(h_0) \subset \g_1(h)\) show that \(\g_1(h_0) \cap V^{-\tau} \cong (V^{-\tau})^{(k)}\) for some \(1 \leq k \leq r\) and a suitable Jordan frame \(F\) in \(V^{-\tau}\). Combining this with equation \eqref{eq:invclass-proof-5grad-red}, we see that \(\g(\tau,h_0)\) is the subalgebra that is obtained from the euclidean Jordan algebra \((V^{-\tau})^{(k)}\) via the Kantor--Koecher--Tits construction. We can determine \(\g(\tau,h_0)\) up to isomorphy using Table \ref{table:cayley-type-class-simplepart}. Since \(\g^\tau\) is isomorphic to one of the subalgebras in Table \ref{table:herm-invclass} for every hermitian simple Lie algebra \(\g\) and every involutive automorphism \(\tau \in \Aut(\g)\) with \(\tau(\Wmin) = -\Wmin\) (cf.\ \cite[Thm.\ 3.2.8]{HO97}), we can apply this procedure to every such \(\g\) and \(\tau\) to see that \(\g(\tau,h_0)\) is contained in Table \ref{table:herm-invclass}.
\end{proof}

It remains to show that, for a fixed \(\tau \in \Aut(\g)\), every subalgebra \(\g(\tau,h)\) in Table \ref{table:herm-invclass} does actually occur.

\begin{prop}
  For a fixed involution \(\tau \in \Aut(\g)\) with \(\tau(\Wmin) = -\Wmin\), every subalgebra \(\g(\tau,h)\) in {\rm Table \ref{table:herm-invclass}} can be realized for some \(h \in \g^\tau\).
\end{prop}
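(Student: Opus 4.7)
The plan is to reverse the reduction used in the previous proposition and construct, for each entry of Table \ref{table:herm-invclass} corresponding to \(\g\) and \(\tau\), an explicit element \(h_0 \in \g^\tau\) that realizes it. Fix a Cartan involution \(\theta\) commuting with \(\tau\), so that \((\tau,\theta) \in \cA(\g)\). By Corollary \ref{cor:invclass-5grad-inclusion}, there is a hyperbolic element \(h \in \g^{\tau,-\theta}\) with \(\spec(\ad h) \subset \{0,\pm\tfrac12,\pm 1\}\) such that \(\g_t(h)\) is a hermitian tube type subalgebra with \(\rk_\R \g_t(h) = \rk_\R \g\). If \(h_0 \in \g_t(h) \cap \g^\tau\) and \(\spec(\ad_{\g_t(h)} h_0) \subset \{0, \pm\tfrac12, \pm 1\}\), then Lemma \ref{lem:invclass-non-tube-type-5grad-red} (together with the argument given in the previous proposition) yields \(\g(\tau,h_0) = \g_t(h)(\tau\lvert_{\g_t(h)},h_0)\). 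This reduces the construction to the tube type case.

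In the tube type case, endow \(V := \g_1(h)\) with the euclidean simple Jordan algebra structure from Example \ref{ex:KKT} and set \(\sigma := -\tau\lvert_V \in \Aut(V)\). Theorem \ref{thm:inv-hermlie-jordan} and Proposition \ref{prop:jordan-inv-herm-rk2} identify \(V^\sigma\) up to isomorphy from the triple \((\g,\g^\tau,\tau)\), and the Kantor--Koecher--Tits construction applied to \(V^\sigma\) yields \(\g_t(h)(\tau,h)\) (cf.\ Remark \ref{rem:tkk-onetoone}). Proposition \ref{prop:inv-herm-basis} supplies a basis \(\{K_1,\dots,K_s\}\) of a maximal abelian subspace \(\fa_\fh \subset \g^\tau \cap \fp\) of dimension \(s = \rk_\R \g^\tau\). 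For \(1 \leq k \leq s\), the element \(h_0 := \tfrac12 \sum_{\ell=1}^k K_\ell\) lies in \(\g^{\tau,-\theta}\), satisfies \(\spec(\ad h_0) \subset \{0, \pm\tfrac12, \pm 1\}\), and \(\g_1(h_0) \cap V^\sigma\) is the Peirce subalgebra \((V^\sigma)^{(k)}\) obtained from a Jordan frame of \(V^\sigma\) as in Proposition \ref{prop:jordan-simple-subalg} (in the non-split case, one invokes Lemma \ref{lem:jordan-nonsplit-frame} so that the \(K_\ell\) correspond to the sums \(c_\ell + \alpha(c_\ell)\) of paired primitive idempotents). Consequently, \(\g(\tau,h_0)\) is the simple hermitian tube type algebra produced by KKT from \((V^\sigma)^{(k)}\), and this sweeps out exactly the single-summand rows of Table \ref{table:herm-invclass} via Table \ref{table:cayley-type-class-simplepart}.

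For the Cayley type cases, \(\sigma\) is a Peirce reflection and \(V^\sigma\) decomposes as a direct sum \(V^{(p)} \oplus V^{(q)}\) of two simple Jordan subalgebras (Proposition \ref{prop:class-inv-jordan}(a)). Choose a Jordan frame \(\{X_1,\dots,X_r\}\) of \(V\) compatible with this decomposition (so that each \(X_i\) is fixed by \(\sigma\) and the reflection splits the frame into two blocks), and for every partition \(k+\ell \leq r\) set \(h_0 := \tfrac12 \sum_{i \in A \cup B} H_i\), where \(A\) indexes \(k\) idempotents from the first block and \(B\) indexes \(\ell\) idempotents from the second. Then \(h_0 \in \g^\tau\), \(\g_1(h_0) \cap V^\sigma \cong (V^{(p)})^{(k)} \oplus (V^{(q)})^{(\ell)}\), and the KKT correspondence produces the direct sum \(\g(\tau,h_0)\) of two hermitian tube type Lie algebras listed in the Cayley type rows. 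The cases \(k=0\) or \(\ell=0\) recover the single-summand entries within these rows.

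The main obstacle will be the bookkeeping required for the non-tube type algebras (\(\su(p,q)\) with \(p>q\), \(\su(2p,2q)\), \(\so^*(4p+2)\), and \(\fe_{6(-14)}\)): one must verify, using Proposition \ref{prop:nontube-inv-restr} to identify \(\g_t(h)^\tau\) and comparing with the tube type rows, that the above construction carried out inside \(\g_t(h)\) really produces every entry appearing in the \(\g\)-row of Table \ref{table:herm-invclass}. Likewise, in the \(\so(2,p)\) case with \(\g^\tau \cong \so(q,1) \oplus \so(1,p-q)\), Remark \ref{rem:jordan-inv-herm-rk2} shows that both realizations \(V^\sigma \cong \R\times\R^{p-q}\) and \(V^\sigma \cong \R\times\R^q\) are attained by suitable choices of \(h\), which gives the three possible values \(k \in \{1, p-q+1, q+1\}\) after taking Peirce subalgebras; this discrete matching is the only genuinely case-dependent step.
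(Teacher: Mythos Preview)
Your proposal is correct and follows essentially the same route as the paper's proof. The only cosmetic difference is that you parametrize the lower-rank elements via the Lie-theoretic basis \(\{K_1,\dots,K_s\}\) of \(\fa_\fh\) from Proposition~\ref{prop:inv-herm-basis}, whereas the paper works on the Jordan side and writes \(h_k = \sum_{i=1}^k L(d_i)\) for a Jordan frame \(\{d_1,\dots,d_s\}\) of \(V^{-\tau}\); since \(K_\ell = 2L(d_\ell)\) under the identification of Remark~\ref{rem:herm-frame-roots}(b), these are literally the same elements, and your explicit treatment of the two-block Cayley case (choosing \(A \cup B\) inside the frame) just spells out what the paper leaves implicit.
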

\begin{proof}
  Fix a Cartan involution \(\theta\) of \(\g\) commuting with \(\tau\).
  We first consider the case where \(\g\) is of tube type.
  By Corollary \ref{cor:invclass-5grad-inclusion}, there exists \(h \in \g^{\tau,-\theta}\) such that \(\g = \g_{-1}(h) \oplus \g_0(h) \oplus \g_1(h)\). We endow \(V := \g_1(h)\) with the structure of a simple euclidean Jordan algebra as in Example \ref{ex:KKT}.
  Then \(-\tau\) restricts to an involutive automorphism of \(V\) by Proposition \ref{prop:invclass-jordan-inv}, and by Theorem \ref{thm:invclass-3grad-jordan}, the subalgebra \(\g(\tau,h)\) is the Lie algebra that is obtained from \(V^{-\tau}\) via the Kantor--Koecher--Tits construction.
  We can determine \(\g(\tau,h)\) using Theorem \ref{thm:inv-hermlie-jordan}, Proposition \ref{prop:jordan-inv-herm-rk2}, and \cite[Table 1.5.1]{BH98}.
  With this procedure, we obtain all subalgebras \(\g(\tau,h)\) in Table \ref{table:herm-invclass} with \(\rk_\R \g = \rk_\R \g(\tau,h)\), respectively those with \(\rk_\R \g = 2\rk_\R \g(\tau,h)\) in the cases where \(2\rk_\R \g^\tau = \rk_\R \g\).
  For the case \(\g \cong \so(2,n)\), we also refer to Remark \ref{rem:jordan-inv-herm-rk2}.

  In order to obtain the subalgebras of lower rank, we proceed as follows: Let \(F = \{c_1,\ldots,c_r\}\) be a Jordan frame of \(V\) with the properties from Proposition \ref{prop:class-inv-jordan}(b) applied to \(-\tau\lvert_V\).
  Then we obtain a Jordan frame \(F_+\) of \(V^{-\tau}\) by setting \(F = F_+\) if \(-\tau\lvert_V\) is split and \(F_+ := \{c_k - \tau(c_k) : 1 \leq k \leq r\}\) otherwise (cf.\ Lemma \ref{lem:jordan-nonsplit-frame}).
  Let \(F_+ = \{d_1,\ldots,d_s\}\) for \(s = \rk V^{-\tau}\).
  We identify \(\g_0(h)\) with the structure algebra \(\str(V) = \Der(V) \oplus L(V)\) of \(V\) (cf.\ Remark \ref{rem:tkk-onetoone}).
  With a similar argument as in the proof of Lemma \ref{lem:peirce-refl-cayley-type}, we see that \(L(F_+)\) is contained in \(\g^\tau\).
  Consider now for \(1 \leq k \leq s-1\) the element \(h_k := \sum_{i=1}^k L(d_i) \in \g^\tau\). Then \((V^{-\tau})^{(k)} \cong V_1^{-\tau}(h_k) = \g_1^{-\tau}(h_k) \subset \g_1^{-\tau}(h)\), and since \(\g(\tau,h)\) is obtained from \(V^{-\tau}\) via the Kantor--Koecher--Tits construction, so is \(\g(\tau,h_k)\), and we can determine \(\g(\tau,h_k)\) using Table \ref{table:cayley-type-class-simplepart}. By applying this procedure to all such \(k\), we obtain the remaining subalgebras \(\g(\tau,h)\) in Table \ref{table:herm-invclass}.

  Suppose now that \(\g\) is of non-tube type.
  Then there exists a hyperbolic element \(h \in \g^{\tau,-\theta}\) such that \(\spec(\ad(h)) = \{0, \pm\frac{1}{2}, \pm 1\}\) and \(\g_t(h)\) is a hermitian simple subalgebra of tube type with \(\rk_\R \g = \rk_\R \g_t(h)\) (cf.\ Corollary \ref{cor:invclass-5grad-inclusion}).
  By Remark \ref{rem:tube-type-h-element}(d), we have \(\Wmin(\g_t(h)) \subset \Wmin(\g) \cap \g_t(h)\), so that \(\g(\tau,h) = \g_t(h)(\tau\lvert_{\g_t(h)},h)\).
  We can determine \(\g_t(h)^\tau\) from \(\g^\tau\) using Proposition \ref{prop:nontube-inv-restr}.
  Applying the argument from the tube type case above to \(\g_t(h)\), we obtain all subalgebras \(\g(\tau,h)\) from Table \ref{table:herm-invclass} in the cases where \(\rk_\R \g = \rk_\R \g(\tau,h)\).
  In order to obtain the subalgebras of lower rank, we construct the elements \(h_k \in \g_t(h)^{\tau, -\theta}\) for \(1 \leq k \leq s - 1\) as outlined above.
  Lemma \ref{lem:invclass-non-tube-type-5grad-red} implies that \(\g_t(h_k) \subset \g_t(h)\), so that \(\g(\tau,h_k) = \g_t(h)(\tau,h_k)\).
  Applying this procedure to every such \(k\) again yields the remaining subalgebras in Table \ref{table:herm-invclass}, which finishes the proof.
\end{proof}

\subsection*{Acknowledgement}
We thank Karl-Hermann Neeb for all the helpful discussions and his support during the research on this topic and also for reading earlier versions of the manuscript.
Moreover, we thank the referees for their detailed feedback and suggestions to improve the readability of the paper.

\end{document}